\title{Limits of elliptic hypergeometric biorthogonal functions}
\author{Fokko J. van de Bult and Eric M. Rains}
\newtheorem{theorem}{Theorem}[section]
\newtheorem{definition}[theorem]{Definition}
\newtheorem{proposition}[theorem]{Proposition} 
\newtheorem{corollary}[theorem]{Corollary}
\newtheorem{lemma}[theorem]{Lemma}
\newcommand\T{\rule{0pt}{2.4ex}}   
\newcommand\Ttwo{\rule{0pt}{2.8ex}}   
\newcommand\B{\rule[-1.2ex]{0pt}{0pt}}   
\begin{document}

\begin{abstract}
The purpose of this article is to bring structure to (basic) hypergeometric biorthogonal systems, in particular to the $q$-Askey scheme of basic hypergeometric orthogonal polynomials. We aim to achieve this by looking at the limits as $p\to 0$ of the elliptic hypergeometric biorthogonal functions from \cite{S}, with parameters which depend in varying ways on $p$. As a result we get 38 systems of biorthogonal functions with 
for each system at least one explicit measure for the bilinear form. Amongst these we indeed recover the $q$-Askey scheme. Each system consists of (basic hypergeometric) rational functions or polynomials.
\end{abstract}

\maketitle

Elliptic hypergeometric functions are a generalization of hypergeometric series where the quotient of two subsequent terms is not a rational function of $n$ (ordinary hypergeometric series), or a rational function of $q^n$ (basic hypergeometric series), but an elliptic function of $n$. They were introduced in the late 20th century, and since then many important identities for (basic and ordinary) hypergeometric functions have been generalized to the elliptic level. A nice overview of the study of elliptic hypergeometric functions is given in \cite{Sessay}.

In \cite{SZ1}, \cite{SZ2} and \cite{S}, Spiridonov and Zhedanov studied a family of biorthogonal elliptic hypergeometric functions. These functions can be seen as elliptic versions of the Askey-Wilson polynomials. The functions are defined as a very well poised
elliptic hypergeometric sum ${}_{12}V_{11}$ and the measure corresponds to the integrand of the elliptic beta integral. 

It should be noted that the elliptic hypergeometric functions do not represent polynomials, rather the biorthogonal functions are elliptic functions (of $z$) whose pole locations are restricted. Another significant difference is that on the elliptic level we consider \textit{bi}orthogonal functions, that is, we have two different families of functions, which are orthogonal to each other. The two families of functions are related by interchanging two of their parameters. These two parameters do not occur for Askey-Wilson polynomials and determine the pole locations of the biorthogonal functions. For special values of the parameters a discrete biorthogonality exists, which can be obtained from the continuous measure by residue calculus. 
Just as the continuous orthogonality reduces to that of the Askey-Wilson polynomials, the discrete orthogonality reduces to that of the $q$-Racah polynomials.

The $q$-Askey scheme \cite{KS} gives a description of all known univariate basic hypergeometric families of orthogonal polynomials. It describes the families and the (limit) relations between them, and at the top we have the Askey-Wilson polynomials (i.e. all other families can be written as limits or special cases of the Askey-Wilson polynomials). In \cite{vdBR} the authors have shown that you can obtain a degeneration scheme of basic hypergeometric beta integrals (both the ones with evaluations and those with transformations) by taking the limit as $p\to 0$ from the elliptic hypergeometric beta integral and choosing how the parameters depend on $p$. While it is well known that a proper limit from the elliptic hypergeometric biorthogonal functions yields the Askey-Wilson polynomials, a systematic consideration of what limits can be obtained had not yet been carried out. Inspired by the results of our previous article we set out to discover what limits we could obtain, with the hope that the entire $q$-Askey scheme would appear as limits.

We managed to obtain many different limits of (bi)orthogonal systems of functions, which includes the entire $q$-Askey scheme. This is however just a subpart of our complete scheme. We also obtain numerous pairs of families of rational functions, which are biorthogonal to each other,
families of rational functions biorthogonal to families of polynomials, and families of polynomials biorthogonal on the unit circle (e.g., the Pastro polynomials \cite{Pastro}). 
Several properties of the elliptic hypergeometric biorthogonal functions should be easily transferable (such as their behavior under difference operators), but we have not yet studied them all.

One particularly pretty way of viewing all these systems and their relations is by considering the polytope $P^{(0)}$ from \cite{vdBR}. In that paper the authors tiled the polytope with three different tiles $P_I^{(0)}$, $P_{II}^{(0)}$ and $P_{III}^{(0)}$. This tiling appears naturally in this paper by considering the Hesse polytope ($P^{(1)}$ in \cite{vdBR}) restricted to a given hyperplane, and subsequently projected to another hyperplane. The tiles then correspond to the images of faces of the restricted Hesse polytope. 

As was shown in \cite{vdBR} each face of any of these tiles corresponds to some limit of the beta integral evaluation, which, as mentioned earlier is the squared norm of the constant function 1. Now we can extend this picture by associating to each simplicial face a system of biorthogonal functions (with orthogonality measure given by the appropriate beta integral limit). One system of biorthogonal functions is a limit of another one if the associated face contains the face of the other system in its boundary. When two faces differ by shifts along vectors in the root lattice of $E_6$, the associated biorthogonal functions are the same, though the two faces give two different measures. 

Let us now give a brief description of which limits we actually considered, and how we discerned in which cases the limits would form a biorthogonal system. We'll let  $\tilde R_{n}(z;t_r;u_r;q;p)$ (where we have 4 parameters $t_r$ and 2 parameters $u_r$) denote the family of elliptic symmetric functions which is biorthogonal to the same functions with the two $u$-parameters interchanged. Now we consider limits as $p\to 0$, while the parameters $z$, $t_r$ and $u_r$ of the biorthogonal function also change in this limit (but $q$ remains fixed). In particular we look at $\tilde R_{n}(z p^{\zeta};t_r p^{\alpha_r};u_rp^{\gamma_r};q;p)$. By a relatively simple argument, these limits always exist, for every choice of $\zeta$, $\alpha_r$ and $\gamma_r$; that is, if we rescale the function by the proper power of $p$ the limit as $p\to 0$ exists and is generically non-zero. However, many of these limits are independent of the variable $z$, which means that they cannot form a family of biorthogonal functions, thus we first filter those out. Our second criterion is to make sure that the limit of the bilinear form specialized to $\tilde R_n$ and $\tilde R_m$, i.e. the expression $\langle \tilde R_{n},\tilde R_{m}\rangle$, still makes sense. Together these conditions determine our classification. It turns out that for each vector satisfying these two conditions we obtain a pair of families of functions which satisfy biorthogonality with respect to a measure we can write down explicitly.

In \cite{vdBRmult} and \cite{vdBRmeas} we show that the same classification also works multivariately, when we take limits of the multivariate $BC_n$-symmetric biorthogonal functions from \cite{Rainstrafo} and \cite{RainsBCn}. Apart from establishing orthogonality in those cases we also obtain generalizations of all other Macdonald conjectures there. The Macdonald polynomials will also fall in our scheme as special cases of multivariate Pastro polynomials.

The article is organized as follows: First we have a notational section, followed by a general section on rings of power series in $p$, which is the space which contains the biorthogonal functions. Subsequently we discuss the definition of the elliptic biorthogonal functions in Section \ref{secbiorthouniv}. Next we discuss the limits of the biorthogonal functions as $p\to 0$ in Section \ref{seclimbiorthouniv}, and in particular answer the question when these limits are $z$-dependent. In Section \ref{seclimsys} we discuss systems of biorthogonal functions (two families of functions and their inner products) and consider when these have proper limits. Subsequently we briefly discuss the types of measures occurring in the limit. 
Section \ref{secAWscheme} compares our results with the $q$-Askey scheme, and the following section gives the relation with Pastro polynomials. Finally, in appendix \ref{secsystemlimit} we explicitly tabulate all limiting systems. 

\section{Notation of univariate $q$-symbols}\label{secnot}
We say a function $f(x;z)$ is written multiplicatively in $x$ if the presence of multiple parameters at the place of $x$ indicates a product; and if $\pm$ symbols in those parameters also indicate a product over all possible combinations of $+$ and $-$ signs. For example
\begin{align*}
f(x_1, x_2, \ldots, x_n;p) &= \prod_{r=1}^n f(x_i;p), \\ f(x^{\pm 1} y^{\pm 1};p) &= 
f(xy;p)f(x/y;p)f(y/x;p)f(1/xy;p).
\end{align*}

Now we define the $q$-symbols and their elliptic analogues as in \cite{GR}. Let $0<|q|,|p|<1$ and set
\begin{align*}
(x;q) &= \prod_{r=0}^\infty (1-xq^r), &
(x;q)_n &= \prod_{r=0}^{n-1} (1-xq^r), & (x;p,q) &= \prod_{r,s\geq 0} (1-xp^rq^s) \\
\theta(x;p) &= (x,p/x;p),&
\theta(x;q;p)_n &= \prod_{r=0}^{n-1} \theta(xq^r;p), & 
\Gamma(x;p,q) & = \prod_{i,j\geq 0} \frac{1-p^{i+1}q^{j+1}/x}{1-p^i q^j x}.
\end{align*}
All these functions are written multiplicatively in $x$. Note that the terminating product $(x;q)_n$ is also defined if $|q|\geq 1$. Likewise $\theta(x;q;p)_n$ is defined for all $q$, though we must still insist on $|p|<1$.

We call a function $f:\mathbb{C}^* \to \mathbb{C}$ symmetric if $f(z)=f(1/z)$. A function is $p$-abelian if it satisfies
$f(z)=f(pz)$. 

We define the space $A(u_0;p,q)$ as the space of all meromorphic symmetric $p$-abelian functions $f$ such that
\[
\theta(pq z^{\pm 1}/u_0;q;p)_{m_f} f(z)
=  \frac{\Gamma(u_0 z^{\pm 1})}{\Gamma(u_0 q^{-m_f} z^{\pm 1})} f(z)
\]
is holomorphic for sufficiently large $m_f$ (where $m_f$ is allowed to depend on $f$). That is, $f$ can only have poles at the points $u_0 q^{-l} p^k$ and
$u_0^{-1} q^l p^k$ for $k\in \mathbb{Z}$ and $1\leq l\leq m_f$, and these poles must be simple.

\section{Function rings}
In this section we define some spaces of functions which contain the functions under consideration. By being slightly more restrictive in the space under consideration than usual we can prove some general results in advance. The variables $x$ in this section will be specialized not just to the variable $z$ of the biorthogonal functions, but also to the parameters $t_r$ and $u_r$.

In principle we can consider the objects we work with as power series in a parameter $p$ (with $|p|<1$). That is
\begin{definition}
We let $F=F(x)$ be the field of formal series $\sum_{t\in T} a_t(x) p^t$, where the set $T$ of exponents is discrete and bounded from below, and the coefficients $a_t(x)$ are (multivariate) rational functions in $x$ independent of $p$, such that $a_{\min T} \neq 0$, together with the function which is constant 0. The valuation of a non-zero element $f=\sum_{t} a_t p^t \in F$ is given by 
$val(f) = \min_{t\in T} t$. The leading coefficient is given by $lc(f) = a_{val(f)}$.
\end{definition} 
If the series $f\in F$ converges (for all small values of $p$), then intuitively the valuation of $f$ describes the asymptotic
size of $f$ as $p\to 0$, while $lc(f)$ describes $\lim_{p \to 0} p^{-val(f)} f$. This means that the goal in this article is to find
leading coefficients (and to be certain we have the leading coefficient we need to find the valuations) of certain specific series (to be defined later). 

One of the properties 
we want to prove is the iterated limit property, Proposition \ref{propiteratedlim}, which is in essence a purely algebraic affair. 
In the statement of this proposition we must be able to substitute $x$ by $p^{\epsilon} x=(p^{\epsilon_1} x_1, \ldots, p^{\epsilon_n}x_n)$. 
To make sense of the new series $f(p^{\epsilon}x)=\sum_{t\in T} a_t(p^{\epsilon} x) p^t$ we must expand each $a_t(p^{\epsilon} x)$ in terms of $p$ and 
then add the resulting series together. This is not possible for every element in $F(x)$; in particular the expansion of 
$a_t(p^{\epsilon} x)$ may have an arbitrarily low valuation, which could give us infinite sums for the individual terms in the expansion of $f(p^{\epsilon} x)$, or violate the boundedness-from-below of the exponent set $T$. This problem can be avoided if we control the complexity of the rational functions $a_t$ in an appropriate way. 

\begin{definition}
We define the degree of a rational function $r\in \mathbb{C}(x_1,\ldots,x_n)$, for any $\epsilon \in \mathbb{R}^n$ as
\[
\deg_\epsilon (r(x)) = val(r(p^{\epsilon}x)), \qquad \deg(r(x)) = -\inf_{\epsilon} \frac{\deg_{\epsilon}(r(x))}{\|\epsilon\|},
\]
where we use the supremum norm for $\|\epsilon\|$.
The subfield $R(x)\subset F(x)$ is given by those series $f=\sum_{t\in T} a_t(x) p^t$ such that
\[
\inf_{t\in T, t>val(f)}  \frac{\deg(a_t)}{t-val(f)} =C < \infty.
\]
\end{definition}
For monomials $x^{u} = \prod_i x_i^{u_i}$, we see that $\deg_{\epsilon}(x^u) = val(x^u p^{\epsilon \cdot u}) = \epsilon\cdot u$. Thus we find that $\deg(x^u) = \sum_{i} |u_i|$, so this is the usual definition of degree for polynomials, and we can now see that our definition of degree corresponds with (total) degree for polynomials. 
For univariate rational functions $r(x) = x^n p(x)/q(x)$ (with $x$, $p$ and $q$ being coprime polynomials and $n\in \mathbb{Z}$), the degree equals $\deg(r) = \max(-n,n+\deg(p)-\deg(q))$ (where we take the usual definition for degree of polynomials). Note that this degree can be negative, for example $\deg(x/(1+x^2))=-1$, even though the degree of all (Laurent) polynomials is positive.
 It should also be observed that $\deg_\epsilon(r(x) \cdot q(x)) = \deg_\epsilon(r(x)) + \deg_{\epsilon}(q(x))$, so $\deg(r(x) \cdot q(x)) \leq \deg(r(x)) + \deg(q(x))$.

To see $R(x)$ is indeed a field, there are two slightly tricky questions. First of all, addition of two elements in $R(x)$ may reduce the valuation (if both leading terms have the same valuation), thereby bringing the denominator of $\deg(a_t)/(t-val(f))$ closer to zero. However the discreteness of the exponent set $T$ ensures that we still obtain a lower bound.

The second tricky part is to show that we are allowed to divide in $R(x)$. We can certainly divide in $F(x)$, thus we 
only need to show the bound on the degrees of the rational functions. Indeed we have for $f=\sum_{t\in T} a_tp^t \in R(x)$
\[
\frac{1}{f}=\frac{ p^{-val(f)}}{lc(f)} \frac{1}{1+\sum_{t\in T, t>val(f)} \frac{a_t}{lc(f)}p^{t-val(f)}}
= \frac{p^{-val(f)}}{lc(f)}  \left(1+ \sum_{n\geq 1} \sum_{\substack{(t_1,\ldots,t_n)\in T^n \\ t_r > val(f)  }} \prod_{k=1}^n \frac{a_{t_k}}{lc(f)} p^{t_k - val(f)} \right),
\]
and we have
\[
\frac{\deg\left( \prod_{k=1}^n \frac{a_{t_k}}{lc(f)} \right)}{\sum_{k=1}^n (t_k-val(f))} \leq \max_{k} \frac{\deg(\frac{a_{t_k}}{lc(f)})}{t_k-val(f)}
\leq C -\frac{\deg(lc(f))}{\min_{t\in T, t>val(f)} t-val(f)}.
\]
In particular the constant $C$ can increase by at most $-\frac{\deg(lc(f))}{\min_{t\in T, t>val(f)} t-val(f)}$.

It can now easily be seen that for arbitrary $f\in R(x)$ and $t\neq val(f)$ we have 
\begin{equation}\label{eqvalbound}
val( a_t(p^{\epsilon}x) p^t) = 
 t+\deg_{\epsilon}(a_t) \geq t - \|\epsilon\| \deg(a_t) \geq t - \|\epsilon\| (t-val(f)) C.
\end{equation}
In particular if $\|\epsilon\|C < 1$ the valuation becomes bounded from below, and in this case the series
$f(p^{\epsilon}x)$ is well defined. 
Note that we only know that $f(p^{\epsilon}x)\in R(x)$ for some, small enough $\epsilon$. In particular we are unable to consider
$f(p^{\epsilon} x)$ for arbitrary given $\epsilon$.

Now the iterated limit property is almost trivial
\begin{proposition}\label{propiteratedlim}
Let $f\in R(x)$. Then for small enough $\epsilon>0$ and any $u\in \mathbb{R}^n$ we have
\[
lc(lc(f)(p^u x)) = lc(f(p^{\epsilon u}x))
\]
and
\[
val(f)+ \epsilon \  val(lc(f)(p^u x)) = val(f(p^{\epsilon u} x)).
\]
\end{proposition}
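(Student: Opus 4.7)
The strategy is to expand
\[
f(p^{\epsilon u}x) = \sum_{t\in T} a_t(p^{\epsilon u}x)\,p^t
\]
termwise in $R(x)$ and to show that for all $\epsilon>0$ sufficiently small, only the summand indexed by $t_0 := val(f)$ contributes to the valuation and leading coefficient. By the bound \eqref{eqvalbound} together with $f\in R(x)$, this expansion is itself well-defined (as an element of $R(x)$) whenever $\|\epsilon u\|C < 1$, so the proof of the proposition reduces to (a) reading off the valuation and leading coefficient of the single summand $a_{t_0}(p^{\epsilon u}x)\,p^{t_0}$, and (b) showing that every other summand has strictly larger valuation.

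For (a), I would first establish the scaling identity $\deg_{\epsilon u}(r) = \epsilon\,\deg_u(r)$ for any rational $r$ and any real $\epsilon>0$. This is immediate on Laurent monomials, since $\deg_\epsilon(x^\alpha) = \epsilon\cdot\alpha$; for general $r = N/D$, writing $\deg_\epsilon(r) = val(N(p^\epsilon x)) - val(D(p^\epsilon x))$ and noting that the valuation of a polynomial in $p^\epsilon x$ is the minimum of $\epsilon\cdot\alpha$ over its support reduces the claim to the observation that this argmin does not change when $\epsilon$ is rescaled by a positive constant. This identity yields both the valuation $t_0 + \epsilon \cdot val(lc(f)(p^u x))$ of the leading summand and the equality $lc(lc(f)(p^{\epsilon u}x)) = lc(lc(f)(p^u x))$, since the argmin also determines the leading coefficient.

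For (b), the bound \eqref{eqvalbound} combined with the discreteness of $T$ (so that $\delta := \min\{t-t_0 : t\in T,\ t>t_0\}>0$) gives
\[
val(a_t(p^{\epsilon u}x)\,p^t)\;\geq\;t_0 + \delta\,(1-\|\epsilon u\|C)
\]
for every $t>t_0$. The leading summand has valuation $t_0 + \epsilon\cdot val(lc(f)(p^u x))$; as $\epsilon\to 0^+$ this tends to $t_0$, while the lower bound on the other summands tends to $t_0+\delta$, so strict dominance holds for all sufficiently small $\epsilon$. The main obstacle is really bookkeeping: one must choose a single $\epsilon$ small enough to enforce simultaneously $\|\epsilon u\|C<1$ and $\delta(1-\|\epsilon u\|C) > \epsilon\cdot val(lc(f)(p^u x))$, but both conditions hold on some open interval $(0,\epsilon_0)$ with $\epsilon_0$ depending only on $u$, $C$, $\delta$, and $val(lc(f)(p^u x))$.
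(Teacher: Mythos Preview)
Your proof is correct and follows essentially the same route as the paper: isolate the leading summand $a_{t_0}(p^{\epsilon u}x)p^{t_0}$, verify the two identities hold for it alone, and then use the bound \eqref{eqvalbound} to show that every higher-order summand has strictly larger valuation once $\epsilon$ is small. The paper compresses your step~(a) into the single remark that the proposition ``is clearly true for series consisting of a single term,'' whereas you spell out the positive-homogeneity $\deg_{\epsilon u}(r)=\epsilon\,\deg_u(r)$ and the argmin observation explicitly; similarly, the paper leaves the uniformity in $t$ implicit, while you make the role of the gap $\delta=\min\{t-t_0:t>t_0\}$ (from discreteness of $T$) explicit. These are elaborations rather than a different argument.
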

\begin{proof}
This is clearly true for series $f\in R$ consisting of a single term $f=lc(f) p^{val(f)}$. If we add higher order terms
$a_t p^t$, they don't change the left hand side, and can only change the right hand side if
$val(a_t(p^{\epsilon u} x)p^t) \geq val(lc(f)(p^{\epsilon u} x)p^{val(f)})$, as otherwise we have a unique lowest order term, determined by the 
leading coefficient. The explicit calculation \eqref{eqvalbound} now shows that if $\epsilon$ is small enough this is indeed the case. Indeed, we can choose $\epsilon$ small enough such that $t + \|\epsilon\| (t-val(f)) C \geq val(lc(f)(p^{\epsilon u} x)p^{val(f)})$ for all $t\in T$ with $t>val(f)$.
\end{proof}
As a corollary we find that we can determine the valuation and leading coefficient of the sum of two terms in some cases where the valuations of the two summands are identical. Of course the case where the two valuations are unequal is trivial.
\begin{corollary}\label{corsumequalval}
Let $f,g\in R(x)$ and define $h=f+g$. Suppose $val(f)=val(g)$, and there exists a $u\in \mathbb{R}^n$ such that 
for all small enough $\epsilon>0$ we have $val(f(p^{\epsilon u}x))< val(g(p^{\epsilon u}x))$. Then $val(h)=val(f)$ and $lc(h)=lc(f)+lc(g)$. 
\end{corollary}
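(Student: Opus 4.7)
The plan is to reduce the corollary to the iterated limit property (Proposition \ref{propiteratedlim}) and then check the coefficient at valuation $val(f)$ by a direct computation. The only real worry is that, since $f$ and $g$ have the same valuation, their leading coefficients could cancel in $h = f+g$; the whole content of the corollary is that the extra hypothesis on the directional valuation rules this out.

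First I would translate the hypothesis into a statement about $lc(f)$ and $lc(g)$. By Proposition \ref{propiteratedlim}, for small enough $\epsilon>0$,
\[
val(f(p^{\epsilon u}x)) = val(f) + \epsilon\, val(lc(f)(p^u x)), \quad val(g(p^{\epsilon u}x)) = val(g) + \epsilon\, val(lc(g)(p^u x)).
\]
Since $val(f)=val(g)$, the hypothesis $val(f(p^{\epsilon u}x))<val(g(p^{\epsilon u}x))$ becomes
\[
val(lc(f)(p^u x)) < val(lc(g)(p^u x)),
\]
so the two series $lc(f)(p^u x), lc(g)(p^u x)\in F(x)$ have distinct valuations. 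In particular their sum, $lc(f)(p^u x)+lc(g)(p^u x)$, has the lower valuation and is nonzero in $F(x)$.

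Next I would conclude that $lc(f)+lc(g)$ is nonzero as a rational function of $x$. If it vanished identically, then evaluating at $p^u x$ would also give $0$ in $F(x)$, contradicting what we just obtained. Hence $lc(f)+lc(g)\neq 0$.

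Finally I would write out the sum series termwise. Writing $f=\sum_t a_t(x)p^t$ and $g=\sum_t b_t(x)p^t$ with $a_t=b_t=0$ for $t<val(f)=val(g)$ and $a_{val(f)}=lc(f)$, $b_{val(g)}=lc(g)$, we get
\[
h = f+g = \sum_{t} (a_t(x)+b_t(x)) p^t,
\]
whose coefficient at $t=val(f)$ is $lc(f)+lc(g)\neq 0$, while all coefficients for $t<val(f)$ vanish. Therefore $val(h)=val(f)$ and $lc(h)=lc(f)+lc(g)$, as required. The only nontrivial step is the first reduction, which is exactly what Proposition \ref{propiteratedlim} supplies; everything after that is bookkeeping in the formal series ring.
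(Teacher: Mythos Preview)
Your proof is correct and rests on the same key input (Proposition~\ref{propiteratedlim}), but the logical route differs slightly from the paper's. The paper first observes that $h(p^{\epsilon u}x)=f(p^{\epsilon u}x)+g(p^{\epsilon u}x)$ is a sum of two series with distinct valuations, so $val(h(p^{\epsilon u}x))=val(f(p^{\epsilon u}x))$; it then applies Proposition~\ref{propiteratedlim} to $h$ and to $f$, obtaining an identity linear in $\epsilon$ whose constant term forces $val(h)=val(f)$, after which $lc(h)=lc(f)+lc(g)$ follows by reading off the $p^0$ coefficient. You instead apply Proposition~\ref{propiteratedlim} to $f$ and $g$ separately, convert the hypothesis into $val(lc(f)(p^u x))<val(lc(g)(p^u x))$, and deduce directly that $lc(f)+lc(g)\neq 0$, so the valuation and leading coefficient of $h$ can be read off from the termwise sum. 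Your version avoids the ``equate constant and slope in $\epsilon$'' step and is arguably more transparent about why no cancellation occurs; the paper's version is a touch slicker in that it never needs to name $lc(g)$ at all. Either way the substance is the same.
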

\begin{proof}
First note that as $h(p^{\epsilon u}x) = f(p^{\epsilon u}x)+g(p^{\epsilon u}x)$ is the sum of two terms with different valuations, its valuation is equal to the minimum of those two valuations, i.e. $val(h(p^{\epsilon u}x)) = val(f(p^{\epsilon u}x))$. Now we can calculate using the above proposition 
\[
val(h) + \epsilon \cdot val(lc(h)(p^u x)) = 
val(h(p^{\epsilon u}x)) = val(f(p^{\epsilon u}x)) = 
val(f) + \epsilon \cdot val(lc(f)(p^u x)).
\]
As this should hold for all small enough $\epsilon >0$, we find that
$val(h)=val(f)$ (and $val(lc(h)(p^u x))=val(lc(f)(p^u x))$). As a consequence we see that 
\[
lc(h) = [p^0] h p^{-val(h)} = 
[p^0] fp^{-val(h)} +gp^{-val(h)} =
[p^0] fp^{-val(f)} +gp^{-val(g)} =
lc(f)+lc(g),
\]
where $[p^0] \sum_{t\in T} a_t p^t = a_0$ denotes the coefficient of $p^0$ in the expansion as power series in $p$.
\end{proof}

Of course we are not interested so much in power series, as we are in the (meromorphic) functions they define. 
Note that since we are allowing arbitrary real powers in the series, these
are multi-valued functions of $p$.  However, as in the case of left-bounded
Laurent series, there is a radius $R$ such that for $0<|p|<R$, the series
converges absolutely, and thus converges for any choice of branch.
Thus the space we are really working in will be $M(x)$, defined below.
\begin{definition}\label{defM}
We say a formal power series $f\in F(x)$ converges uniformly on a compact set $K$, if
$p^{-val(f)} f(x)$ converges uniformly for $x\in K$ and $p\in \overline{B_{\epsilon}(0)}=\{z~|~|z|\leq \epsilon\}$ for some $\epsilon>0$. $M(x)$ is the intersection of $R(x)$ with the field of formal power series $f$ which converge uniformly to a holomorphic function
for $x$ in compacta outside the zero-set of some polynomial (which may depend on $f$). 

We also define $\tilde A(u_0;p,q)= A(u_0;p,q) \cap M(u_0)$ as the intersection of these functions with the symmetric $p$-abelian functions defined before.
\end{definition}
Note that the limit of a function $f\in F(x)$ is always its leading coefficient, and thus it is a rational function. This means that if $f$ converges uniformly in a compact set $K$, than $1/f$ converges uniformly on any compact set contained in $K$ minus the zero-set of the numerator of its leading coefficient. In particular if $f\in M(x)$, then so is $1/f$, and therefore $M(x)$ is indeed a field.

Let us now consider some specific elements of $M(x)$. First we define the constant function 
\[
(p;p)_{\infty} = \prod_{j=1}^\infty (1-p^j) = \sum_{n\in \mathbb{Z}} (-1)^n p^{n(3n-1)/2},
\]
which is an element of $M$ where all rational functions (which are the coefficients of $p^t$) are in fact $\pm 1$. Using Jacobi's triple product formula for the theta functions we find
\[
(p;p)_{\infty} \theta(x;p) = \sum_{n\in \mathbb{Z}} (-x)^{n} p^{n(n-1)/2},
\]
so dividing these two elements shows that $\theta(x;p) \in M(x)$ and $1/\theta(x;p)\in M(x)$. In fact we see that
$\theta(p^{\alpha}x;p)\in M(x)$ and $1/\theta(p^{\alpha}x;p) \in M(x)$ for all $\alpha\in \mathbb{R}$, by directly plugging in $x\to p^{\alpha}x$ in the triple product identity. 
Note that for this implication we cannot plug in $p^{\alpha}x$ after viewing $\theta(x;p)$ as an element of $M(x)$, as this could be ill-defined. We can easily determine the valuation and leading coefficient of $\theta(xp^{\alpha};p)$ from this expression and obtain
\[
val(\theta(xp^{\alpha};p)) = \frac12 \{\alpha\}(\{\alpha\}-1)-\frac12 \alpha(\alpha-1) , 
\]
where $\{\alpha\} = \alpha-\lfloor \alpha\rfloor$ denotes the fractional part of $\alpha$, and 
\[
lc(\theta(xp^{\alpha};p)) = \begin{cases} (1-x) (-x)^{-\alpha} & \alpha \in \mathbb{Z}, \\
 (-x)^{-\lfloor \alpha\rfloor} & \alpha \not \in \mathbb{Z}. \end{cases}
\]

We can now conclude that any sum of products and quotients of theta functions $f$, with arguments of the form $p^{\alpha} x^u$ (for some monomial $x^u$) is indeed an element of $M(x)$, and therefore has some valuation $val(f)$ such that $p^{-val(f)} f$ converges uniformly.

\section{Biorthogonal functions}\label{secbiorthouniv}
In this section we recall the definition and basic properties of the biorthogonal functions from \cite{S}. 
\begin{definition}\label{defbiorthouniv}
Let $t_0$, $t_1$, $t_2$, $t_3$, $u_0$, $u_1$, $q$, and $t$ be parameters such that $t_0t_1t_2t_3u_0u_1=pq$. We define

\begin{equation}\label{equnivbiortho}
\tilde R_{n}(z;t_0:t_1,t_2,t_3;u_0,u_1;q;p) := 
\sum_{k=0}^n \frac{\theta(\frac{qt_0}{u_0};q;p)_{2k}}{\theta(\frac{t_0}{u_0};q;p)_{2k}}
\frac{\theta(\frac{t_0}{u_0}, \frac{pq^n}{u_0u_1},q^{-n}, t_0z^{\pm 1}, \frac{q}{u_0t_1}, \frac{q}{u_0t_2}, \frac{q}{u_0t_3};q;p)_k}{\theta(q,\frac{q^{1-n}t_0u_1}{p},\frac{q^{n+1}t_0}{u_0}, \frac{q}{u_0}z^{\pm 1}, t_0t_1,t_0t_2,t_0t_3;q;p)_k} q^{k}
\end{equation}
which is a very well poised elliptic hypergeometric series ${}_{12}V_{11}$.
\end{definition}
The normalization for this definition is chosen so that the biorthogonal functions are highly invariant under shifts of the parameters. 
\begin{lemma}\label{lemRtildeabeluniv}
As functions of $z$ we have $\tilde R_{n} (z;t_0{}:{}t_1,t_2,t_3;u_0,u_1;q;p) \in A(u_0;p,q)$.
Moreover the biorthogonal functions are elliptic in the $t_r$ and $u_r$, that is, they are invariant under multiplying these parameters with integer powers of $p$ (as long as the balancing condition remains satisfied). Finally they satisfy the equations
\[
\tilde R_{n} (zp^{1/2} ;t_0p^{1/2}{}:{}t_1p^{-1/2},t_2p^{-1/2},t_3p^{-1/2};u_0p^{1/2},u_1p^{1/2};q;p) = 
\tilde R_{n} (z ;t_0{}:{}t_1,t_2,t_3;u_0,u_1;q;p)
\]
and
\begin{equation}\label{eqinvquniv}
\tilde R_{n} (z; \frac{1}{t_0}{}:{} \frac{1}{t_1},\frac{1}{t_2},\frac{1}{t_3};\frac{p}{u_0},\frac{p}{u_1};\frac{1}{q};p) = 
\tilde R_{n} (z; t_0 {}:{} t_1,t_2,t_3;u_0,u_1;q;p).
\end{equation}
\end{lemma}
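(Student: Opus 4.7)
The plan is to verify all four assertions by direct inspection of the sum \eqref{equnivbiortho}, using only the elementary identities $\theta(x;p) = \theta(p/x;p)$ and $\theta(px;p) = -x^{-1}\theta(x;p)$ together with the balancing $t_0t_1t_2t_3u_0u_1 = pq$.

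For membership in $A(u_0;p,q)$, I isolate the $z$-dependence: only $\theta(t_0z^{\pm 1};q;p)_k$ and $\theta(qz^{\pm 1}/u_0;q;p)_k$ involve $z$. Symmetry $z \leftrightarrow 1/z$ is immediate from the multiplicative convention. Applying quasi-periodicity to the four $z$-shifted theta factors under $z \mapsto pz$ gives scalars whose products agree between numerator and denominator (each equal to $(pz^2)^{-k}$), so each summand is $p$-abelian. The only possible poles come from zeros of $\theta(qz^{\pm 1}/u_0;q;p)_k$, which are simple and located exactly at $z = u_0 q^{-l}p^m$ and $z = u_0^{-1}q^lp^m$ for $m\in\mathbb{Z}$ and $1 \leq l \leq k \leq n$; taking $m_f = n$ covers every summand.

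For the three remaining claims the recipe is identical: for every theta-Pochhammer factor in a single summand, determine which of its parameter combinations are shifted by a power of $p$ under the given substitution, apply quasi-periodicity to compute the induced scalar, and verify that the totals cancel via the balancing condition. For ellipticity in $t_r$ and $u_r$, it suffices to check a generating set of balance-preserving shifts such as $(t_0,t_1) \mapsto (pt_0,t_1/p)$. For the $p^{1/2}$ shift, the combinations $t_0/z$, $t_0/u_r$, $qz/u_0$, and $t_0t_i$ are all fixed, while the four combinations $t_0z$, $u_0u_1$, $q^{1-n}t_0u_1/p$, and $q/(u_0z)$ each shift by a single power of $p$; collecting the quasi-periodicity scalars yields $q^{nk}/(t_0u_0u_1z)^k$ from both the numerator and the denominator, and these cancel summand-by-summand. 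For the final $q \mapsto 1/q$ identity, the relation $\theta(x;1/q;p)_k = \prod_{j=0}^{k-1}\theta(pq^j/x;p)$ (a consequence of $\theta(x;p) = \theta(p/x;p)$) rewrites every inverted-base Pochhammer in base $q$, after which the substitutions $t_r \mapsto 1/t_r$, $u_r \mapsto p/u_r$, which preserve the balancing in the sense $p^2/(pq) = p\cdot(1/q)$, match the summands of the two sides term by term.

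The main obstacle is the bookkeeping in the $p^{1/2}$ shift, where roughly a dozen theta-Pochhammers must each be inspected and their quasi-periodicity contributions tracked with signs and $q$-powers. Nothing deep is involved; the cancellation is the standard miracle of balanced very well poised sums.
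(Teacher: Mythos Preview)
Your proof is correct and follows the same route as the paper, which simply observes that each summand lies in $A(u_0;p,q)$ and that all the symmetries follow from $\theta(px;p)=\theta(1/x;p)=-x^{-1}\theta(x;p)$; you have merely made the term-by-term bookkeeping explicit. One tiny slip of notation: in the $p^{1/2}$ check your list of ``fixed'' combinations should read $t_0/u_0$ (not $t_0/u_r$), and there are a few further fixed arguments such as $q/(u_0t_i)$ and $q^{n+1}t_0/u_0$ you did not name, but since you correctly identified all four \emph{shifted} arguments and the resulting scalar $q^{nk}/(t_0u_0u_1z)^k$ on both sides, the computation stands.
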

\begin{proof}
The biorthogonal functions are written as sums of functions in the space $A^{(n)}(u_0;p,q)$, so they are in this space themselves. 
The symmetries all follow from $\theta(px;p) = \theta(\frac{1}{x};p) = -\frac{1}{x} \theta(x;p)$. 
\end{proof}

The biorthogonal functions satisfy a biorthogonality relation. There are two kinds of biorthogonality measures. For
generic parameters we have a continuous biorthogonality, while for specific choices of the parameters the continuous measure
reduces to a discrete one. The discrete version can be obtained from the continuous biorthogonality by residue calculus.
The continuous version was discovered in \cite{S}, while the discrete version was already given in \cite{SZ1} and \cite{SZ2}. While the biorthogonal functions themselves are perfectly well-defined for arbitrary $q$, the continuous measure below only works when $|q|<1$ (otherwise the elliptic gamma functions in the integrand are not well-defined). The discrete measure is a completely algebraic affair, so it works for all values of $q$. 
\begin{theorem}
For any $n,m\in \mathbb{Z}_{\geq 0}$, and for generic values of the parameters such that $t_0t_1t_2t_3u_0u_1=pq$ we have
\begin{multline*}
\langle \tilde R_{n}(\cdot; t_0{}:{}t_1,t_2,t_3;u_0,u_1;q;p), \tilde R_{m}(\cdot ; t_0{}:{}t_1,t_2,t_3;u_1,u_0;q;p) \rangle_{t_0,t_1,t_2,t_3,u_0,u_1;q;p}
\\ =\delta_{n,m} \frac{\theta(\frac{p}{u_0u_1};q;p)_{2n}}{\theta(\frac{pq}{u_0u_1};q;p)_{2n}}
\frac{\theta(q,t_2t_3,t_1t_2,t_1t_3, \frac{qt_0}{u_0},\frac{pqt_0}{u_1};q;p)_n}
{\theta(\frac{p}{u_0u_1},t_0t_1, t_0t_3,t_0t_2, \frac{p}{t_0u_1},\frac{1}{t_0u_0};q;p)_n} q^{-n}
\end{multline*}
where 
\[
\langle f,g\rangle_{t_0,t_1,t_2,t_3,t_4,t_5:q;p} := 
\frac{(q;q) (p;p) }{2 \prod_{0\leq r<s\leq 5} \Gamma(t_rt_s;p,q)} 
\int_{C} f(z) g(z) \frac{\prod_{r=0}^5 \Gamma(t_rz^{\pm 1};p,q)}{\Gamma(z^{\pm 2};p,q)} \frac{dz}{2\pi i z},
\]
for parameters such that $t_0t_1t_2t_3t_4t_5=pq$ and functions $f\in A(t_4;p,q)$ and $g \in A(t_5;p,q)$.
Let $m_f$ be such that $f(z_i) \prod_i \Gamma(t_4z_i^{\pm 1})/\Gamma(t_4q^{-m_f} z_i^{\pm 1})$ is holomorphic, and define $m_g$ likewise for $g$. Let $\tilde t_r=t_r$ for $0\leq r\leq 3$ and $\tilde t_4 = t_4q^{-m_f}$ and $\tilde t_5 =t_5q^{-m_g}$. The contour is now taken to be symmetric ($C=C^{-1}$) and contains all points of the form $p^i q^j \tilde t_r$ (for $i,j\geq 0$) (and hence excludes their reciprocals)\footnote{To be precise, $C$ should be a chain representing the described homology class.}. 

If moreover $t_0t_1=q^{-N}$ (implying no contour of the desired shape exists), and thus $t_2t_3u_0u_1=pq^{N+1}$  we define the inner product as 
\begin{multline*}
\langle f,g\rangle_{t_0,t_1,t_2,t_3,u_0,u_1:q;p}
 :=
\sum_{0\leq k\leq N} 
f(t_0 q^{k}) g(t_0 q^{k}) 
\frac{\theta(q t_0^2;q;p)_{2k}  }{ \theta( t_0^2;q;p)_{2k}   }
\frac{\theta(t_0^2,t_0t_1,t_0t_2,t_0t_3,t_0u_0,\frac{t_0u_1}{p};q;p)_k  }{\theta(q,\frac{qt_0}{t_1},\frac{qt_0}{t_2},\frac{qt_0}{t_3}, \frac{qt_0}{u_0},\frac{pqt_0}{u_1};q;p)_k} q^k
\\ \times 
\frac{\theta( \frac{qt_0}{u_0}, t_1t_2,t_1t_3,\frac{t_1u_1}{p};q;p)_N}{\theta(\frac{t_1}{t_0},\frac{q}{u_0t_2},\frac{q}{u_0t_3},\frac{pq}{u_0u_1};q;p)_N}
\end{multline*}
and have the same biorthogonality, unless $q^k\in p^{\mathbb{Z}}$ for some $0\leq k\leq N$ (in which case one of the point masses becomes infinite).
\end{theorem}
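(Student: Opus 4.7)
The plan is to treat the continuous and discrete cases in turn, proving the continuous biorthogonality inductively from Spiridonov's elliptic beta integral and then deducing the discrete statement by residue calculus on the coalescing poles, as the narrative above anticipates.

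For the continuous case the base is $n=m=0$, for which $\langle 1,1\rangle$ is precisely the elliptic beta integral evaluation of Spiridonov. For the inductive step I would exploit the explicit ${}_{12}V_{11}$ structure of \eqref{equnivbiortho}. Each summand of $\tilde R_n(z;\ldots)$ carries a factor $\theta(t_0 z^{\pm 1};q;p)_k$ which, via the identity $\theta(x;q;p)_k = \Gamma(x;p,q)/\Gamma(xq^k;p,q)$, is absorbed into the integrand as a shift $t_0\mapsto t_0 q^k$. Evaluating the shifted integral by Spiridonov's integral (with a corresponding shift of $u_0$ dictated by the balancing condition) produces a ${}_{12}V_{11}$ in the remaining indices. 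One then recognizes this inner sum as a Bailey-type transform of $\tilde R_m(\cdot;\ldots;u_1,u_0;\ldots)$; orthogonality for $n\neq m$ follows from vanishing of the transformed series at the wrong step, and the $n=m$ normalization from a single explicit evaluation. Throughout, the invariances and balancing collected in Lemma~\ref{lemRtildeabeluniv} are used to bring parameters into a standard form.

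For the discrete statement, specialize $t_0 t_1=q^{-N}$ in the continuous formula. The contour condition fails because the points $t_1 q^j=q^{-N+j}/t_0$ (required to lie inside $C$) collide with the points $q^{-(N-j)}/t_0$ (required to lie outside $C$) for $0\le j\le N$. Deforming $C$ past these coalescing pairs produces $N+1$ residues at $z=t_0 q^k$, $0\le k\le N$, plus a residual contour integral that vanishes because the prefactor $1/\Gamma(t_0 t_1;p,q)=1/\Gamma(q^{-N};p,q)$ supplies a zero. An explicit residue computation then yields the $q^k$-summand in the theorem, while the outer factor $\theta(\tfrac{qt_0}{u_0},\ldots;q;p)_N/\theta(\tfrac{t_1}{t_0},\ldots;q;p)_N$ arises from the interplay between this zero and the poles of $\Gamma(t_2 t_3 u_0 u_1;p,q)=\Gamma(pq^{N+1};p,q)$ appearing in the $\langle\cdot,\cdot\rangle$ prefactor. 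The exceptional case ``$q^k\in p^{\mathbb{Z}}$'' is exactly when two residue points coincide modulo the elliptic period, producing an extra singularity.

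The main obstacle is the inductive step in the continuous case --- identifying the correct Bailey-type ${}_{12}V_{11}\leftrightarrow {}_{12}V_{11}$ transformation and tracking the explicit normalization through it. A cleaner alternative, and essentially the route taken in \cite{S}, is the spectral one: $\tilde R_n$ is characterized as an eigenfunction of an explicit second-order $q$-difference operator on $A(u_0;p,q)$ which is self-adjoint (after the $u_0\leftrightarrow u_1$ swap) with respect to the elliptic beta density. Biorthogonality is then formal from distinctness of eigenvalues, and the normalization is pinned down by the single base case. I would fall back on this spectral route if the direct theta-function bookkeeping in the Bailey approach becomes unwieldy.
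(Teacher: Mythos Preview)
The paper does not supply its own proof of this theorem: it is stated as a known result, with the continuous biorthogonality attributed to Spiridonov \cite{S} and the discrete version to Spiridonov--Zhedanov \cite{SZ1,SZ2}, together with the one-line remark that ``the discrete version can be obtained from the continuous biorthogonality by residue calculus.'' Your treatment of the discrete case is exactly this residue argument and matches the paper's remark.

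For the continuous case there is nothing in the paper to compare against beyond the citation. Your fallback route --- the spectral argument via a self-adjoint second-order difference operator on $A(u_0;p,q)$ --- is in fact the approach actually used in \cite{S}, so that is the ``paper's proof'' by reference. Your primary inductive/Bailey-type route is a different idea, and the step you flag as the main obstacle really is one: after expanding $\tilde R_n$ termwise and absorbing $\theta(t_0 z^{\pm1};q;p)_k/\theta(qu_0^{-1}z^{\pm1};q;p)_k$ into the weight as a shift $t_0\mapsto t_0q^k$, $u_0\mapsto u_0q^{-k}$, what remains is $\tilde R_m$ paired against a \emph{shifted} beta density, not an elliptic beta integral that can be evaluated in closed form. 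The claim that the resulting inner sum is ``a Bailey-type transform of $\tilde R_m$'' and that orthogonality for $n\neq m$ then follows from ``vanishing of the transformed series at the wrong step'' is where all the content would have to live, and as written it is an assertion rather than an argument. If you want a self-contained proof, take the spectral route from the outset; if you want to pursue the direct route, the correct organizing principle is a contiguous-relation/three-term recurrence for the pairing (connecting $\langle \tilde R_n,\tilde R_m\rangle$ to pairings with shifted indices), not a single Bailey transform.
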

Note that using $n=m=0$ we find that the inner products are normalized such that $\langle 1,1\rangle=1$. The corresponding equations are the famous elliptic beta integral evaluation, respectively the Frenkel-Turaev summation. It should be observed that one over the value of the inner products $\langle \tilde R_l, \tilde R_l\rangle$ gives the summand of a very well poised elliptic hypergeometric series (which, if it terminates can be summed by the Frenkel-Turaev summation). This becomes convenient when studying determinacy of the measure after taking the limit to orthogonal polynomials.

The definition gives us one expansion of $\tilde R_{n}$ as an elliptic hypergeometric series. It is known that the series in question, a ${}_{12}V_{11}$ has a large symmetry group, that is, that there are essentially $72=|W(E_6)|/|S_6|$ ways of writing $\tilde R_{n}$ as such a series. Amongst these symmetries we find that the biorthogonal functions are symmetric up to a constant under permutations of the $t_r$.
The current normalization is such that 
\begin{equation}\label{eqnormRuniv}
\tilde R_{n} (t_0 ;t_0:t_1,t_2,t_3;u_0,u_1) = 1
\end{equation}
as setting $z=t_0$ reduces the series to a single term. 
Together with the permutation symmetry in the $t_r$ this also gives us evaluations for $\tilde R_{n}(t_r)$ for $r=1,2,3$. 


\section{Limits of the biorthogonal functions}\label{seclimbiorthouniv}
The limits that we consider of the biorthogonal functions are the limits as $p\to 0$ in 
\[
\tilde R_n(zp^{\zeta};t_0p^{\alpha_0} : t_1p^{\alpha_1},t_2p^{\alpha_2},t_3p^{\alpha_3};u_0p^{\gamma_0},u_1p^{\gamma_1};q;p),
\]
for different values of $\zeta,\alpha_r,\gamma_r \in \mathbb{R}$, where we assume $t_r$, $u_r$ and $z$ are independent of $p$. That is, we explicitly describe how the parameters $t_r$ and $u_r$ and the variable $z$ depend on $p$. It should be noted that the balancing condition  implies that $\sum_r \alpha_r + \sum_r \gamma_r =1$. Of course the limit will depend on the values of these exponents $\zeta,\alpha_r,\gamma_r$ and we want to determine what the different possible limits are, and when those limits can give us a set of biorthogonal functions. The first necessary condition for obtaining biorthogonality in the limit is that the limiting functions must still be dependent on $z$, so in this section we determine when this is the case.

It should be observed that plugging in these values of the parameters in the definition of the biorthogonal functions, expresses $\tilde R_n$ as a sum of products of theta functions with arguments $p^{\chi}x$ for some $\chi$'s and $x$'s. By the general theory about power series we can thus immediately see that $\tilde R_n \in M(z,t_r,u_r,q)$, and thus that its limit exists (i.e. that it has some valuation and that $\tilde R_n p^{-val(\tilde R_n)}$ converges uniformly to $lc(\tilde R_n)$ as $p\to 0$ outside the zero set of some polynomial). 

What we do not know, however, is if we can plug in these values and then interchange sum and limit in the definition of the biorthogonal functions. Indeed there are many cases in which the valuation of $\tilde R_n$ is much higher than the valuation of the summands. Fortunately we have many different ways of writing the same function (using the symmetries of the ${}_{12}V_{11}$), and
in practical cases we can find a way of writing the function such that this is possible.

For obtaining abstract information about the result of taking this limit, it is more convenient to express the series in terms of an elliptic beta integral. Indeed these series are equal to an $E^1$ from \cite{vdBR} specialized at a point where the integral reduces to a single series of residues. 
In that article the authors studied the limits of $E^1$ as $p\to 0$, so now we can apply the results from there.
It turns out, we can obtain maximal symmetry in our equations by first using equation \eqref{eqinvquniv} of $\tilde R_{n}$ which inverts $q$. This will impose the somewhat unconventional condition $|q|>1$, but since the results we obtain are about finite series, they are just as valid for arbitrary values of $q$.

Therefore the expression in terms of $E^1$ we will work with is given by 
\begin{multline}
\label{eqrluniv}
 \tilde R_n (z;t_0:t_1,t_2,t_3;u_0,u_1;q,t;p)  
= 
\frac{E^1(\frac{p^{\frac12} }{q^{\frac12n}t_0}, \frac{p^{\frac12}}{q^{\frac12n}t_1},\frac{p^{\frac12} }{q^{\frac12n}t_2}, \frac{p^{\frac12}}{q^{\frac12n}t_3}, \frac{p^{\frac12}q^{\frac12 n}}{u_0},\frac{p^{\frac12} q^{\frac12 n-1}}{u_1}, p^{\frac12} q^{\frac12 n} z, \frac{q^{\frac12 n}}{ p^{\frac12}  z} ;q^{-1},p)}
{(pq^{-n-1}, \frac{ p t_0z}{q} ,\frac{t_0}{qz}, \frac{p u_0z}{q} ,\frac{u_0}{qz}, \frac{ pu_1z }{q^{n}},\frac{u_1}{q^n z}, \frac{pq^{n-1}}{u_0u_1}, \frac{pq^{n-1}}{t_0u_1} , \frac{p}{t_0u_0};p,q^{-1})}  
 \\  \times 
\frac{(  \frac{t_0t_1}{q}, \frac{t_0t_2}{q},\frac{t_0t_3}{q};p,q^{-1})}
{\prod_{r=1}^3 (\frac{p}{u_0t_r},\frac{p}{qu_1t_r},  \frac{p}{t_0t_r},t_0t_rq^{n-1}, \frac{pt_rz}{q},\frac{t_r}{qz},  \frac{pt_r}{t_1t_2t_3 q^n};p,q^{-1})}  
\frac{(\frac{t_0q^n}{u_0} ,\frac{pu_0}{qt_0}, \frac{t_0u_1}{q^n} ,\frac{1}{u_0z},\frac{pz}{u_0};p,q^{-1})}{(\frac{t_0}{u_0} ,\frac{pu_0}{q^{n+1}t_0} ,t_0u_1,\frac{q^n}{u_0z}, \frac{pq^{n}z}{u_0};p,q^{-1})}
\end{multline}

In this expression we plug in $z\to zp^{\zeta}$, $t_r\to t_rp^{\alpha_r}$ and $u_r \to u_r p^{\gamma_r}$. Subsequently the polytope $P$ in the following proposition gives those vectors for which in \cite{vdBR} the limit of the above univariate biorthogonal function was given.
In this proposition and the rest of this article we use the notation $\alpha_4:=\gamma_0$ and $\alpha_5:=\gamma_1$ to indicate the symmetry between $\alpha_r$'s and $\gamma_r$'s. The symmetries of Lemma \ref{lemRtildeabeluniv} change the vector $(\alpha;\gamma;\zeta)$; For example 
\eqref{eqinvquniv} maps $(\alpha;\gamma;\zeta) \mapsto (-\alpha_0,-\alpha_1,1-\alpha_2,1-\alpha_3;-\gamma_0,-\gamma_1;\zeta)$. The proposition then shows that using these symmetries, any vector $(\alpha;\gamma;\zeta)$ can be mapped into the polytope $P$.  As a consequence we only have to consider the vectors $(\alpha;\gamma;\zeta)\in P$ and thus the limits for any univariate biorthogonal function can be found in \cite{vdBR}.
\begin{proposition}\label{proppolytopes}
The vector $(\frac12-\alpha_0, \frac12-\alpha_1, \ldots, \frac12-\alpha_5, \frac12 +\zeta, -\frac12-\zeta)$ is in the polytope
$P^{(1)}$ (from \cite{vdBR}) if and only if $(\alpha;\zeta)$ is in the polytope $P$ defined by the bounding inequalities
\[
|\frac12 +\zeta|\leq \frac12, \qquad |\frac12 +\zeta|-\frac12 \leq \alpha_i, \qquad
\alpha_i\leq 1+\alpha_j, \qquad 1\geq \alpha_i+\alpha_j, \qquad
\frac32 \geq \alpha_i+\alpha_j+\alpha_k +|\frac12 +\zeta|, \qquad
\sum_i \alpha_i=1.
\]
Let $T$ be the translation group with elements $t_{\alpha}(x) = x+\alpha$, for 
$\alpha$  in the root lattice $\Lambda$ of $E_6$, generated by  the vectors
\begin{multline*}
(-\frac12,-\frac12,-\frac12,\frac12,\frac12,\frac12;\frac12), \quad
(1,-1,0,0,0,0;0), \quad (0,1,-1,0,0,0;0), \\ (0,0,1,-1,0,0;0), \quad (0,0,0,1,-1,0;0), \quad (0,0,0,0,1,-1;0).
\end{multline*}
Let $G$ be the group generated by $T$ and the flip
\[
flip( \alpha;\zeta) = (-\alpha_0,-\alpha_1,1-\alpha_2,1-\alpha_3,-\alpha_4,-\alpha_5 ;\zeta).
\]
Given any vector $(\alpha;\zeta) \in \{v\in \mathbb{R}^6 ~|~ \sum_i v_i = 1\} \times \mathbb{R}$, there exists an element $g\in G$ such that $g(\alpha;\zeta) \in P$.
\end{proposition}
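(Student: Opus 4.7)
The proposition splits into two assertions that I would handle in sequence. The first is a coordinate change: set
$$\Phi(\alpha;\zeta) := \bigl(\tfrac12-\alpha_0,\tfrac12-\alpha_1,\ldots,\tfrac12-\alpha_5,\tfrac12+\zeta,-\tfrac12-\zeta\bigr),$$
and verify directly that each bounding inequality of $P^{(1)}$ from \cite{vdBR} pulls back via $\Phi$ to exactly one of the inequalities listed for $P$. The balancing $\sum_i \alpha_i=1$ corresponds to $\sum_{i=0}^5 v_i=2$, while $v_6+v_7=0$ is automatic; then $|\tfrac12+\zeta|\leq\tfrac12$ becomes $0\leq v_6,v_7\leq 1$, $\alpha_i+\alpha_j\leq 1$ becomes $v_i+v_j\geq 0$, $\alpha_i\leq 1+\alpha_j$ becomes $v_j-v_i\leq 1$, and the triple-sum bound matches the corresponding long face of $P^{(1)}$. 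This part is mechanical tabulation against the known face list of $P^{(1)}$.

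For the second assertion, my preferred route is to transport the entire problem via $\Phi$ and invoke the analogous covering statement for $P^{(1)}$ proved in \cite{vdBR}. Concretely, one checks that the six displayed translation generators push forward under $\Phi$ to translations by a basis of the $E_6$ root lattice $\Lambda$ embedded in the affine hyperplane $\{\sum v_i=2,\,v_6+v_7=0\}$, and that $\Phi\circ\mathrm{flip}\circ\Phi^{-1}$ is the affine involution $(v_0,v_1,v_2,v_3,v_4,v_5;v_6,v_7)\mapsto(1-v_0,1-v_1,-v_2,-v_3,1-v_4,1-v_5;v_6,v_7)$, which is exactly the extra symmetry used in \cite{vdBR} to cover $P^{(1)}$. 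The conclusion then transfers back under $\Phi^{-1}$.

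As an independent check, a direct algorithmic proof should also work. First apply powers of the generator $(-\tfrac12,-\tfrac12,-\tfrac12,\tfrac12,\tfrac12,\tfrac12;\tfrac12)$ (and its double, which is the pure $\zeta$-shift by $1$) to bring $\zeta$ into $[-1,0]$, securing $|\tfrac12+\zeta|\leq\tfrac12$. Next use the five remaining translation generators, which span the $A_5$-sublattice of $\zeta$-preserving translations, to put $(\alpha_0,\ldots,\alpha_5)$ into a bounded fundamental region for that sublattice, reducing to finitely many candidates. Finally invoke the flip whenever one of the inequalities $\alpha_i+\alpha_j\leq 1$ or a triple-sum inequality is still violated; a monovariant such as $\|\Phi(\alpha;\zeta)\|^2$ minimized over the orbit guarantees termination, since any generator that genuinely improves compliance with a violated inequality strictly decreases it.

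The main obstacle in either version is the same piece of bookkeeping: confirming that the six displayed translation vectors really do form a basis of the $E_6$ root lattice under the identification provided by $\Phi$, and that the flip realizes precisely the extra reflection needed --- beyond the translations alone --- to sweep every vector into $P$. Once these correspondences are laid out explicitly, the statement either follows immediately from the analogous result for $P^{(1)}$ in \cite{vdBR} or from a routine termination argument on the direct algorithm above.
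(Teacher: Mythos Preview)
Your treatment of the first assertion---pulling the bounding inequalities of $P^{(1)}$ back through $\Phi$---matches the paper's exactly.

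For the second assertion, your Route~A rests on a premise that does not hold: the paper does \emph{not} import a covering statement for $P^{(1)}$ from \cite{vdBR}; it proves the covering property here from scratch. So transporting via $\Phi$ and citing \cite{vdBR} is not an available shortcut unless you can actually locate such a result there, and the paper's choice to argue directly suggests you will not.

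Your Route~B is closer in spirit to what the paper actually does, but the paper's execution is markedly more concrete and does not use a monovariant. It proceeds in two explicit stages. First it introduces an intermediate polytope $B\supset P$ (cut out by $|\alpha_i-\alpha_j|\le 1$, the triple-sum bound $\alpha_i+\alpha_j+\alpha_k\ge|\zeta+\tfrac12|-\tfrac12$, and $|\tfrac12+\zeta|\le\tfrac12$) and shows, by an explicit iterative procedure using only the translations in $T$, that every vector can be moved into $B$; a key step is a single half-integer shift applied when the smallest triple sum is too negative. Second, it splits $B\setminus P$ into two pieces by the sign of a specific linear form and, in each piece, writes down a concrete flip-plus-shift that lands in $P$.

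Your monovariant sketch, by contrast, needs more care than you suggest. The flip does not in general decrease $\|\Phi(\alpha;\zeta)\|^2$: a direct computation gives
\[
\|\Phi(\mathrm{flip}(\alpha;\zeta))\|^2-\|\Phi(\alpha;\zeta)\|^2 \;=\; 2(1-\alpha_2-\alpha_3),
\]
which can take either sign, and translations can of course increase the norm as well. So the assertion that ``any generator that genuinely improves compliance with a violated inequality strictly decreases it'' is not automatic and would itself require proof. The phrase ``reducing to finitely many candidates'' is also not right as written: the fundamental region for the $A_5$ sublattice is still a continuum, so at best you reduce to finitely many \emph{cases} according to which inequalities fail. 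None of this is fatal, but it means Route~B as stated is a heuristic outline rather than a proof, whereas the paper's two-stage construction is fully explicit.
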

Remark: Under the equation $\sum_i \alpha_i=1$ the relation
$\frac32 \geq \alpha_i+\alpha_j+\alpha_k +|\frac12 +\zeta|$ (for all $0\leq i<j<k\leq 5$) is equivalent to 
$\frac12 +\alpha_i+\alpha_j+\alpha_k \geq |\frac12+\zeta|$ (for all $0\leq i<j<k\leq 5$) and we will use them
both.
\begin{proof}
To obtain the bounding inequalities of the polytope $P$ we just plug the variables $(\frac12-\alpha_0, \frac12-\alpha_1, \ldots, \frac12-\alpha_5, \frac12 +\zeta, -\frac12-\zeta)$ in the bounding inequalities of $P^{(1)}$ and of the resulting equations we scratch those that follow from other equations in that list. The remaining equations are the ones we wrote down.

To show that $P$ contains a fundamental domain for $G$ we first note that, using only the subgroup $T$, any vector $v$ can be mapped to the polytope $B$ defined by the bounding inequalities
\begin{multline*}
\alpha_i-\alpha_j \leq 1 , \quad (1\leq i\neq j\leq 6), \qquad
\alpha_i+\alpha_j+\alpha_k \geq |\zeta+\frac12|-\frac12, \quad (1\leq i<j<k\leq 6), \\
|\frac12 +\zeta|\leq \frac12, \qquad \sum_i \alpha_i =1.
\end{multline*}
Indeed by iterating the operation ``add 1 to the smallest $\alpha_i$ and subtract 1 from the largest $\alpha_i$ if their difference is more than 1'' we can ensure that $\alpha_i-\alpha_j \leq 1$ (this operation must end at some point, as it decreases the non-negative integer $\sum_{i<j} \lfloor |\alpha_i-\alpha_j| \rfloor$). Now we note that $(0;1)$ is in the lattice $\Lambda$, so we can subtract or add an integer to $\zeta$ to ensure $|\zeta+\frac12 |\leq 1/2$. If at this stage $\alpha_i+\alpha_j + \alpha_k \geq |\zeta+\frac12|-\frac12$ for all $i,j,k$, we are done. Otherwise the sum of the three smallest $\alpha_i$ must be less than $|\zeta+\frac12|-\frac12$. Suppose without loss of generality that $\alpha_0 \geq \alpha_1\geq \alpha_2 \geq \alpha_3\geq \alpha_4 \geq \alpha_5$. Now consider the shift of $\alpha$ given by $(\beta;\theta)$, where
$\beta_r = \alpha_r -\frac12$ for $0\leq r\leq 2$ and $\beta_r = \alpha_r+\frac12$ for $3\leq r\leq 5$ and
$\theta=\zeta \pm \frac12$, where we choose the sign to ensure $|\theta+\frac12|\leq \frac12$ (which makes $|\zeta+\frac12|+|\theta+\frac12|=\frac12$). We claim that now $(\beta;\theta) \in B$. Indeed it is clear it still satisfies $\beta_i-\beta_j \leq 1$ (and $|\theta+\frac12|\leq \frac12$). Moreover we see that the smallest three $\beta$'s are exactly the largest $\alpha$'s minus $\frac12$, i.e. $\beta_1$, $\beta_2$ and $\beta_3$. Moreover we have
\[
\beta_0+\beta_1+\beta_2 = \alpha_0+\alpha_1+\alpha_2 -\frac32 = 
1-\alpha_3-\alpha_4-\alpha_5 - \frac32 \geq  -|\zeta+\frac12| = |\frac12+ \theta|-\frac12.
\]

Now the polytope $P$ is a subset of $B$, so we need to use the flip on the set $B \setminus P$ and show that it lands in $P$. This is simplest if we use the ordered versions of $P$ and $B$, that is we add the equations $\alpha_0\leq \alpha_1\leq \cdots \leq \alpha_5$ to the polytopes. We now write 
the complement as the union of two parts, the one part satisfying $|\frac12+\zeta| + \frac12 \leq \alpha_0+\alpha_4+\alpha_5$, and
the other part satisfying $|\frac12+\zeta| + \frac12 \geq \alpha_0+\alpha_4+\alpha_5$. In the first case, in the complement $B\setminus P$, we can assume $\alpha_4+\alpha_5>1$ (as if $\alpha_4+\alpha_5\leq 1$ and we were in the complement we would have $\alpha_0 < |\frac12+\zeta|-\frac12$ contradicting our new equation). Similarly, in the second case we can assume $\alpha_0<|\frac12+\zeta|-\frac12$. 

Now we can look in the first case at the new parameters
$\beta_r=-\alpha_r$ ($0\leq r\leq 3$) and $\beta_r=1-\alpha_r$ ($r=4,5$) (and $\zeta=\zeta$), to land in the polytope with bounding equations
\begin{multline*}
\beta_3\leq \beta_2\leq \beta_1\leq \beta_0\leq \beta_5\leq \beta_4 \leq 1+\beta_3, \qquad
0\leq |\frac12+\zeta|\leq \frac12, \qquad \sum_r \beta_r =1, \\
\beta_4+\beta_5<1, \qquad \frac12\geq \beta_0+\beta_1+\beta_2 + |\zeta+\frac12|, \qquad
\frac32 \geq \beta_0+\beta_4+\beta_5 + |\zeta+\frac12|.
\end{multline*}
This is in the polytope $P$; the only equation which is not immediately clear is $\beta_3\geq |\frac12 +\zeta|-\frac12$, which follows from 
\[
\beta_3 =|\zeta+ \frac12|-(\beta_0+\beta_1+\beta_2+ |\zeta+\frac12|)+ (1-\beta_4-\beta_5) \geq |\zeta+\frac12| - \frac12.
\]

In the second case we consider the new parameters $\beta_0=-\frac12-\alpha_0$ and $\beta_r=\frac12-\alpha_r$ for $1\leq r\leq 5$, and 
$\theta$ such that $|\zeta+\frac12|+|\theta+\frac12|=\frac12$, and see that in that case too we land in $P$. 

We conclude that for any vectors in the complement of $P$ in $B$ we can apply a flip and perhaps some more shifts to get a vector in $P$ itself.
\end{proof}


So now we have to determine which limits are $z$-dependent. We first consider a slightly larger group of biorthogonal functions; those for which either the limit of $E^1$ is $z$-dependent, or the limit of the prefactor is $z$-dependent. 

\begin{proposition}\label{propwhenzdep1univ}
For a vector $(\alpha;\zeta) \in P$, the limit of the $E^1$ in \eqref{eqrluniv} depends on $z$ for precisely one value of $|\zeta+ \frac12|$ given $\alpha$, to wit
$|\zeta+\frac12| = \min(\frac12,\alpha_r+\frac12, \frac32-\alpha_r-\alpha_s-\alpha_t)$ (where $0\leq r<s<t\leq 5$). 
\end{proposition}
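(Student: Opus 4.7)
The plan is to pull the statement back to the $E^1$ limit classification of \cite{vdBR} via Proposition \ref{proppolytopes} and then extract the $z$-dependence from a convex-geometric observation. Under the substitutions $z\to zp^\zeta$, $t_r\to t_rp^{\alpha_r}$, $u_r\to u_rp^{\gamma_r}$, the eight parameters of the $E^1$ appearing in \eqref{eqrluniv} acquire $p$-exponents that form exactly the vector $(\frac12-\alpha_0,\ldots,\frac12-\alpha_5,\frac12+\zeta,-\frac12-\zeta)$ used in Proposition \ref{proppolytopes}. Only the last two of these coordinates carry any dependence on $z$, and they lie symmetrically on a common axis as $\pm(\frac12+\zeta)$.

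First I would assemble those bounding inequalities of $P$ that actually constrain $|\frac12+\zeta|$; inspecting the list in Proposition \ref{proppolytopes} together with the equivalent reformulation in the remark shows that these are precisely $|\frac12+\zeta|\leq \frac12$, $|\frac12+\zeta|\leq \alpha_r+\frac12$, and $|\frac12+\zeta|\leq \frac32-\alpha_r-\alpha_s-\alpha_t$. Consequently, for fixed $\alpha$ with $\sum_i\alpha_i=1$, the largest value of $|\frac12+\zeta|$ consistent with $(\alpha;\zeta)\in P$ is exactly the minimum on the right-hand side of the proposition; no strictly larger value is attainable.

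Next I would invoke the $E^1$ limit analysis from \cite{vdBR}, which says that the leading-order $p\to 0$ asymptotic of $E^1$ is controlled by the maximal face of $P^{(1)}$ containing the image vector, and that the limit depends functionally on a given parameter precisely when the corresponding coordinate saturates an extremal inequality on that face. Since the two $z$-carrying coordinates are $\frac12+\zeta$ and $-\frac12-\zeta$, they necessarily move together along the same axis and therefore become extremal simultaneously; this happens exactly when $|\frac12+\zeta|$ reaches the maximum value computed above. Whenever $|\frac12+\zeta|$ is strictly smaller, both $z$-coordinates sit strictly interior to the active facets, so $z$ is absorbed into the overall normalization and the limit is $z$-independent.

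The main obstacle is the careful application of the $E^1$ classification from \cite{vdBR}: one must verify that $z$-dependence really is a facet-saturation condition (as opposed to arising from a delicate cancellation among surviving terms), and one must confirm that the $z$ and $1/z$ coordinates are indeed forced to reach extremality together rather than independently. Once those inputs are made precise — which is essentially the content of the limit classification in \cite{vdBR} combined with the explicit list of bounding inequalities of $P^{(1)}$ — the proposition collapses to the elementary convex-geometric computation sketched in the second paragraph.
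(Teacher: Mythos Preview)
Your approach is essentially the same as the paper's: both arguments fix $\alpha$, observe that the $\zeta$-inequalities of $P$ cut out the interval $|\zeta+\tfrac12|\le m$ with $m=\min(\tfrac12,\alpha_r+\tfrac12,\tfrac32-\alpha_r-\alpha_s-\alpha_t)$, and then invoke the face classification of $E^1$-limits from \cite{vdBR} to conclude that for $|\zeta+\tfrac12|<m$ the relevant face contains the $\zeta$-direction, forcing $z$-independence. The paper phrases the input from \cite{vdBR} as ``the limit depends only on parameters orthogonal to the plane in which the face lies,'' whereas you phrase it as a facet-saturation condition; these are equivalent, and your final paragraph correctly identifies that making this precise is the only real content beyond the elementary convexity.
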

\begin{proof}
As shown in \cite{vdBR} the limit of $E^1$ depends only on parameters orthogonal to the plane in which the face lies. 
The polytope in the parameters is a convex set, and for fixed values of $\alpha$ the possible values of $\zeta$ are
given by some interval. Indeed if $m=\min(\frac12,\alpha_r+\frac12, \frac32-\alpha_r-\alpha_s-\alpha_t)$ (for all $r<s<t$) we find that $\zeta \in [-m-\frac12,m-\frac12]$. The interval $(-m-\frac12,m-\frac12)$ is now clearly contained in a single face of the polytope (as all inequalities
with $\zeta$ are strict there), and this face contains the vector in the direction $\zeta$. Hence the resulting limits
are independent of $z$. Thus the only possible value of $|\zeta+\frac12|$ which gives a limit depending on $z$ is $|\zeta+\frac12|=m$.
\end{proof}

The $z$-dependent part of the prefactor can be given as 
\begin{multline*}
\frac{(pz/u_0,1/u_0z ;p,q^{-1})}{(pq^nz/u_0,q^n/u_0z, pu_0z/q, u_0/qz,pu_1z/q^n,u_1/q^nz ;p,q^{-1}) \prod_{r=0}^3 (pt_rz/q,t_r/qz ;p,q^{-1})} \\ =
\prod_{k=0}^{n-1} \frac{1}{(pq^{n-k}z/u_0,q^{n-k} /u_0z;p)}
\frac{1}{(pu_0z/q, u_0/qz,pu_1z/q^n,u_1/q^nz ;p,q^{-1}) \prod_{r=0}^3 (pt_rz/q,t_r/qz ;p,q^{-1})}
\end{multline*}
We can now simply read off the following proposition:
\begin{proposition}\label{propwhenzdep2univ}
Within the polytope $P$ the prefactor above depends in the limit on $z$ only if $\alpha_r+1/2=|\zeta+1/2|$ for some $0\leq r\leq 5$, or $\alpha_4 + |\zeta+\frac12| \geq \frac12$.
\end{proposition}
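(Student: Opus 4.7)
The plan is to argue factor by factor. After the substitution $(z,t_r,u_i)\mapsto(zp^\zeta, t_rp^{\alpha_r}, u_ip^{\gamma_i})$, every Pochhammer appearing in the prefactor takes the form $(yp^\delta;p)$ or $(yp^\delta;p,q^{-1})$ with $y$ an explicit, generically nonzero monomial in $z,t_r,u_i,q$. For such a factor the leading coefficient as $p\to 0$ is independent of $z$ precisely when $\delta>0$: every single constituent $(1-yp^{\delta+j}q^{-i})$ then has strictly positive $p$-exponent and tends to $1$. Conversely, when $\delta\leq 0$ the $j=0$ contributions (and possibly others) produce a nontrivial function of $y$ in the leading coefficient, giving genuine $z$-dependence; so the criterion I must check is simply whether $\delta\leq 0$ occurs in some factor of the denominator.

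Using the identity $\frac{(pz/u_0;p,q^{-1})}{(pq^nz/u_0;p,q^{-1})}=\prod_{k=0}^{n-1}\frac{1}{(pq^{n-k}z/u_0;p)}$ (and similarly for the $1/u_0z$ pair) that the authors have already exhibited, the prefactor splits cleanly into (a) the finite product $\prod_{k=0}^{n-1}\frac{1}{(pq^{n-k}z/u_0,q^{n-k}/u_0z;p)}$ and (b) the six double Pochhammer pairs $(pt_rz/q,t_r/qz;p,q^{-1})^{-1}$ with $r=0,1,2,3$ together with $(pu_0z/q,u_0/qz;p,q^{-1})^{-1}$ and $(pu_1z/q^n,u_1/q^nz;p,q^{-1})^{-1}$. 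Under the convention $\alpha_4=\gamma_0,\alpha_5=\gamma_1$ the same $\delta$-analysis applies uniformly to all six pairs in (b).

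For block (a), the substituted exponents are $\delta=1+\zeta-\alpha_4$ and $\delta=-\zeta-\alpha_4$; one of these is $\leq 0$ iff $\alpha_4\geq\min(1+\zeta,-\zeta)=\tfrac12-|\zeta+\tfrac12|$, equivalently $\alpha_4+|\zeta+\tfrac12|\geq\tfrac12$. For a pair in (b), the two substituted exponents are $\delta=1+\alpha_r+\zeta$ and $\delta=\alpha_r-\zeta$. Inside $P$ the inequality $\alpha_r\geq|\zeta+\tfrac12|-\tfrac12$ makes both nonnegative, so $\delta\leq 0$ can only occur with equality, and $\delta=0$ happens iff $\alpha_r+\tfrac12=|\zeta+\tfrac12|$ (the first case realizing $\zeta\leq-\tfrac12$, the second $\zeta\geq-\tfrac12$). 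Combining both blocks gives exactly the two alternatives in the statement.

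The main (and only) obstacle is to be sure that the $z$-dependent leading coefficients produced by different factors cannot accidentally cancel, which is what would let some $\delta\leq 0$ factor be absorbed without producing $z$-dependence in the product. This is automatic: distinct Pochhammer factors in the list above involve genuinely distinct monomial arguments in $(z,t_r,u_i)$, so their leading contributions are algebraically independent and no collapse occurs.
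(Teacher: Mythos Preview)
Your factor-by-factor analysis is correct and is precisely what the paper means by ``we can now simply read off the following proposition'': each Pochhammer $(yp^\delta;\cdot)$ contributes a $z$-independent leading coefficient exactly when $\delta>0$, and you correctly work out, using the inequalities defining $P$, when $\delta\le 0$ can occur in blocks (a) and (b). One remark: the proposition asserts only the ``only if'' direction, so your final paragraph on non-cancellation is unnecessary---the contrapositive (if neither condition holds then every $\delta>0$, hence every factor and therefore the product has $z$-independent leading coefficient) already suffices---and the algebraic-independence sketch there would need more care were it actually required.
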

%

Of course, if both the prefactor and the $E_1$ itself are $z$-dependent it is possible that their product is independent of $z$. Thus of the cases above we still need to check whether they are actually $z$-dependent or not. 
\begin{proposition}\label{propzdep}
Within the polytope $P$ the limit of the biorthogonal function is $z$-dependent if and only if the vector $(\alpha;\zeta)$ is in one of the following sets
\begin{itemize}
\item The polytope $P$ intersected with the half space defined by $\alpha_4+|\zeta+\frac12| \geq \frac12$;
\item On the facet of $P$ given by $|\zeta+\frac12| = \frac12 + \alpha_4 + \alpha_r+\alpha_s$ for some $r,s\in \{0,1,2,3,5\}$, $r\neq s$;
\item On the facet of $P$ given by $\alpha_r+\frac12 = |\zeta+\frac12|$ (for $r\in \{0,1,2,3,5\}$) intersected with 
the halfspace $\alpha_4 + |\zeta+\frac12| \leq \frac12$, except in the interior of the resulting polytope;
\item On the facet of $P$ given by $\alpha_4+\frac12 = |\zeta+\frac12|$, except in the interior of this facet;
\end{itemize}
\end{proposition}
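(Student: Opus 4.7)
The plan is to combine Propositions \ref{propwhenzdep1univ} and \ref{propwhenzdep2univ} by splitting the expression (\ref{eqrluniv}) into the $E^1$ factor and the explicit prefactor, taking the limit of each separately, and then carefully checking whether any $z$-dependence cancels when both factors contribute. The polytope $P$ decomposes naturally into a ``generic'' interior region (where the $E^1$ limit is $z$-independent) and a collection of facets determined by Proposition \ref{propwhenzdep1univ}, and the analysis proceeds on each piece.

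For the generic region, i.e.\ the points of $P$ with $|\zeta+\tfrac12|$ strictly less than the bound appearing in Proposition \ref{propwhenzdep1univ}, the $E^1$ limit contributes nothing $z$-dependent, so the biorthogonal function has a $z$-dependent limit iff the prefactor does. Since the conditions $\alpha_r+\tfrac12 = |\zeta+\tfrac12|$ lie on facets of $P$, Proposition \ref{propwhenzdep2univ} reduces the condition here to $\alpha_4+|\zeta+\tfrac12|\geq \tfrac12$; this is exactly Case 1 (with boundary points absorbed into Cases 2--4).

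On each facet where $\lim E^1$ itself is $z$-dependent, one uses the explicit leading-coefficient computations for $E^1$ from \cite{vdBR} to identify which of the six parameters carries the surviving $z$-dependence. For the facet $|\zeta+\tfrac12|=\tfrac32-\alpha_r-\alpha_s-\alpha_t$, the $E^1$ limit picks up factors involving the three $\alpha$'s not in $\{r,s,t\}$. If $4\in\{r,s,t\}$, the $u_0$-dependence of $E^1$ cancels cleanly against the $u_0$-terms of the prefactor; if $4\notin\{r,s,t\}$, no such cancellation exists, and rewriting $\tfrac32-\alpha_r-\alpha_s-\alpha_t = \tfrac12+\alpha_4+\alpha_i+\alpha_j$ via $\sum_i\alpha_i=1$ produces exactly Case 2. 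Similarly, on the facet $|\zeta+\tfrac12|=\alpha_r+\tfrac12$ for $r\in\{0,1,2,3,5\}$, the $E^1$ $z$-dependence is carried by the parameter $t_r$ (or $u_1$), and generically cancels against the matching prefactor factors in the interior of the facet intersected with $\alpha_4+|\zeta+\tfrac12|\leq \tfrac12$; on the boundary of this sub-polytope, a cancellation condition breaks and $z$-dependence survives, giving Case 3. Case 4, corresponding to $r=4$, is completely analogous, with the cancellation now between the $u_0$-pieces of $E^1$ and of the prefactor.

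The main obstacle is the cancellation bookkeeping for Cases 3 and 4: one must verify that the $z$-dependence produced by the $E^1$ limit on the facet is of the same form (up to a $z$-independent multiplicative factor) as the $z$-dependent piece of the prefactor's leading coefficient, with the net $z$-dependence vanishing generically on the facet but failing precisely on its codimension-two boundary strata. Once the explicit leading-coefficient formulas for both factors are in hand, this cancellation can be checked by direct comparison of theta-function products, but the case analysis is unavoidable because the set of $z$-dependent pieces in the prefactor depends on which inequalities defining $P$ are saturated.
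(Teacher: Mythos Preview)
Your overall strategy---split \eqref{eqrluniv} into the $E^1$ factor and the explicit prefactor, invoke Propositions \ref{propwhenzdep1univ} and \ref{propwhenzdep2univ} to locate the candidate regions, and then check for cancellation where both pieces are $z$-dependent---is exactly what the paper does. The decomposition into the four cases is also the same.

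The one substantive tool you are missing is the iterated limit property (Proposition \ref{propiteratedlim}), which the paper uses as its main organizing device. The point is that if a \emph{further} degeneration of a limit is $z$-dependent, then the original limit must already have been $z$-dependent (a $z$-constant function cannot degenerate to something $z$-dependent). The paper exploits this twice. First, for the region $R=P\cap\{\alpha_4+|\zeta+\tfrac12|\ge\tfrac12\}$: rather than analyzing each face of $R$ separately, the paper picks a single interior point of $R$, checks $z$-dependence there, and concludes for all of $R$ at once---including the portions of $R$ lying on facets of $P$ where the $E^1$ limit is also $z$-dependent. Your ``boundary points absorbed into Cases 2--4'' does not quite do this, because on those boundary pieces both factors contribute and you would need a separate cancellation check. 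Second, in Cases 3 and 4 the paper verifies $z$-dependence directly only on the codimension-two facets of the relevant sub-polytope, and then the iterated limit argument propagates $z$-dependence down to all lower-dimensional faces automatically. Your ``a cancellation condition breaks on the boundary'' is the right intuition, but without the iterated limit property you would in principle have to recompute on every lower-dimensional stratum, not just the codimension-two ones.

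A smaller point: your discussion of the facet $|\zeta+\tfrac12|=\tfrac32-\alpha_r-\alpha_s-\alpha_t$ when $4\in\{r,s,t\}$ is garbled. In that case the facet lies entirely inside $R$ (since $\alpha_4+|\zeta+\tfrac12|=\tfrac32-\alpha_r-\alpha_s\ge\tfrac12$), so it is $z$-dependent by Case 1, and no cancellation of $u_0$-pieces is relevant. The paper simply observes this containment rather than analyzing the $E^1$ factor there.
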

\begin{proof}
The limiting functions can only depend on $z$ if either the $E_1$ or the prefactor depends on $z$. This leads us to consider the 
subpolytope of $P$ given by $\alpha_4+|\zeta+\frac12|\geq \frac12$, and the facets where $|\zeta+\frac12| = \frac12$, 
$|\zeta+\frac12| = \alpha_r+\frac12$ (for some $0\leq r\leq 5$), and $|\zeta+\frac12| = \frac32 - \alpha_r-\alpha_s-\alpha_t$ for some distinct $0\leq r,s,t\leq 5$. 

To determine whether the functions in these polytopes are $z$-dependent, it suffices to show that one of their limits is $z$-dependent.
We cut the polytope $P$ according to the hyperplanes given by the equations $|\zeta+\frac12| = \frac12$, $|\zeta+\frac12|=\alpha_r+\frac12$, $|\zeta+\frac12|=\frac32-\alpha_r-\alpha_s-\alpha_t$ and $|\zeta+\frac12|+\alpha_4 = \frac12$. 
The iterated limit property shows that the limit associated to any vector (say in face $F$ of $P$ tiled using these hyperplanes) can be further degenerated to any face which has $F$ as a subface.

For example, take the subpolytope $R$ which is given by the intersection of the half-space  $\alpha_4+|\zeta+\frac12|\geq \frac12$ with $P$. 
For the limit corresponding to any vector in $R$, we can always take a further limit to the limit in the interior of $R$. As the point $(-\frac29, \frac29, \frac29,\frac29;\frac39,\frac29;-\frac14)$ is in the interior, and as at this point only the prefactor depends on $z$, the function associated to the interior of $R$ depends on $z$. Hence all functions associated to any vector in $R$ are $z$-dependent.

This also takes care of the case $|\zeta+\frac12| = \frac12$ and $|\zeta+\frac12| = \frac32-\alpha_r-\alpha_s-\alpha_4$ (with $r\neq s\in \{0,1,2,3,5\}$) as these facets of $P$ are also facets of $R$. If we consider the facet of $P$ given by $|\zeta+\frac12| = \frac12+\alpha_4+\alpha_r+\alpha_s$ (again with $r\neq s\in \{0,1,2,3,5\}$), we note it is cut in two parts by the hyperplane $\alpha_4+|\zeta+\frac12| = \frac12$. On the part $\alpha_4+|\zeta+\frac12| \geq \frac12$ we are in the subpolytope $R$, whereas in the part 
$\alpha_4+|\zeta+\frac12| \leq \frac12$ gives a polytope (of codimension 1) in whose interior only the $E_1$ depends on $z$. By again using the iterated limit argument, all limits associated to vectors on this facet of $P$ depend on $z$.

Let us now consider the case $\alpha_r+\frac12=|\zeta+\frac12|$ for some $r\neq 4$. This facet is again split into  two parts by the hyperplane $\alpha_4+|\zeta+\frac12|=\frac12$, so by the same argument as above, we only need to consider the case for which $\alpha_4+|\zeta+\frac12| \leq \frac12$. In the interior of the subpolytope of $P$ given by these two extra equations, both $E_1$ and the prefactor depend on $z$, and a direct calculation shows that their product in fact becomes constant (i.e. $z$-independent). As noted before, direct calculation of the limit of the $E_1$ involves only plugging the parameters into the results of \cite{vdBR} (in this case using Proposition 4.3 in loc. cit.), and for the prefactor the limits are also immediate. So now we have to consider all facets of this polytope (which are codimension two in $P$), and direct verification shows that the limits associated to these facets all are dependent on $z$. Thus all limits in this polytope are $z$-dependent except the limits associated to the interior. 

The final case is when  $\alpha_4+\frac12 = |\zeta+\frac12|$. In this case we always  have $\alpha_4+|\zeta+\frac12| \leq \frac12$. A direct calculation (using \cite[Prop. 4.3]{vdBR}) again shows that the limit of the biorthogonal functions is constant in the interior of this facet. Thus we calculate the limits of the facets of this facet and observe that those are $z$-dependent. Hence we conclude that the limit is $z$-dependent everywhere on this facet except on the interior.
\end{proof}

Finally obtaining the valuation of $\tilde R_n$ is simple, as we know the valuation of $E_1$ to be zero (in the polytope $P$), and the valuation of the prefactor can be read off immediately.
\begin{proposition}\label{propvaluationbiorthouniv}
For vectors $(\alpha;\zeta)$ in the polytope $P$ the valuation of $\tilde R_n$ is given by 
\begin{multline*}
val(\tilde R_n(zp^{\zeta};t_0p^{\alpha_0}:t_rp^{\alpha_r};u_rp^{\gamma_r})) = 
n \bigg((\alpha_0-\alpha_4) 1_{\{\alpha_0-\alpha_4<0\}}-
(-\zeta-\alpha_4) 1_{\{-\zeta-\alpha_4<0\}} \\ - 
(1+\zeta-\alpha_4) 1_{\{1+\zeta-\alpha_4<0\}}-
(\alpha_0+\alpha_5) 1_{\{\alpha_0+\alpha_5<0\}} -
\sum_{r=1}^3 (\alpha_0+\alpha_r) 1_{\{\alpha_0+\alpha_r<0\}} \bigg)
\end{multline*}
\end{proposition}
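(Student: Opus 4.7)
The plan is to read off the valuation directly from the factorization \eqref{eqrluniv}, which writes $\tilde R_n$ as $E^1$ times an explicit rational combination of double Pochhammer symbols $(\cdot;p,q^{-1})$. By the analysis of $E^1$ carried out in \cite{vdBR}, after the substitutions $z \mapsto zp^\zeta$, $t_r \mapsto t_r p^{\alpha_r}$, $u_r \mapsto u_r p^{\alpha_{4+r}}$ with $(\alpha;\zeta) \in P$, the valuation of $E^1$ is zero. Hence $val(\tilde R_n)$ equals the valuation of the prefactor, and the task reduces to a factorwise computation.

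The factorwise computation rests on the identity
\[
(aq^n p^\beta; p, q^{-1}) = (ap^\beta; p, q^{-1}) \prod_{k=1}^n (aq^k p^\beta; p),
\]
and its $q^{-n}$ variant, which pair each $n$-dependent symbol in the prefactor with an $n$-independent counterpart and reduce the $n$-dependent portion of its valuation to $\pm n \cdot val((ap^\beta;p))$. In the range $\beta \in (-1, \infty)$ one has $val((xp^\beta;p)) = \min(0,\beta)$, and the defining inequalities of $P$ (in particular $\alpha_i+\alpha_j\leq 1$, $\alpha_i\leq 1+\alpha_j$, and $|\zeta+\tfrac12|\leq \tfrac12$) keep all the $\beta$'s occurring in the prefactor in this range. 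Each $n$-dependent factor therefore contributes $\pm n\min(0,\beta)$ to $val(\tilde R_n)$, with the sign dictated by whether the factor sits in the numerator or denominator of the prefactor.

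Enumerating the $n$-dependent factors in \eqref{eqrluniv}, the majority have $\beta \geq 0$ throughout $P$ and contribute nothing: for instance $(pq^{-n-1})$, $(pu_1z/q^n)$, $(u_1/q^nz)$, $(pq^{n-1}/(u_0u_1))$, $(pq^{n-1}/(t_0u_1))$, $(pu_0/(q^{n+1}t_0))$, and $(pt_r/(t_1t_2t_3q^n))$ all satisfy this. The nonzero contributions come from six factors: the numerator factor $(t_0q^n/u_0)$ with $\beta=\alpha_0-\alpha_4$ (producing the $+n(\alpha_0-\alpha_4)1_{\{\alpha_0-\alpha_4<0\}}$ term), and the denominator factors $(q^n/u_0z)$, $(pq^nz/u_0)$, $(t_0u_1/q^n)$, and $(t_0t_rq^{n-1})$ for $r=1,2,3$, whose $\beta$'s are $-\zeta-\alpha_4$, $1+\zeta-\alpha_4$, $\alpha_0+\alpha_5$, and $\alpha_0+\alpha_r$ respectively, each contributing with the opposite sign. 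The sum of the six $\pm n\min(0,\beta)$ contributions is precisely the expression in the proposition.

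The main obstacle is simply the bookkeeping: one must catalogue every Pochhammer symbol in the prefactor, determine its exponent $\beta$ after the $p$-substitutions, verify using the $P$-inequalities whether $\beta \geq 0$ (in which case it drops out) or $\beta \in (-1,0)$ (in which case it contributes $\pm n\beta$), and track the sign. Once this catalogue is in hand, the formula follows by direct addition; the consistency check $val(\tilde R_0)=0$ is automatic since every term is proportional to $n$.
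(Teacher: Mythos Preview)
Your proposal is correct and is exactly the approach the paper uses: the paper's entire argument is the one-line remark that $val(E^1)=0$ in $P$ and that the valuation of the prefactor in \eqref{eqrluniv} ``can be read off immediately,'' and you have simply supplied the bookkeeping that the paper omits. One small slip in your enumeration: the factor $(t_0u_1/q^n;p,q^{-1})$ lies in the \emph{numerator} of the prefactor, not the denominator; the reason its contribution still carries a minus sign is that it involves $q^{-n}$, so your $q^{-n}$ variant of the identity places the resulting finite product $\prod_k(t_0u_1q^{-k}p^{\alpha_0+\alpha_5};p)$ downstairs---the sign and the final formula are correct.
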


It remains to figure out which limits have nice behavior in the inner product.

\section{Limits of biorthogonal systems}\label{seclimsys}
We would like to consider the limits here of pairs of biorthogonal functions, together with their bilinear form. Of course the goal is 
to find new sets of biorthogonal functions, and a corresponding measure. 

For the measure we would like to have a limit equation of the form
\[
lc(\langle f(\cdot;t_rp^{\alpha_r};u_r p^{\gamma_r}),g(\cdot;t_rp^{\alpha_r};u_r p^{\gamma_r}) \rangle_{t_r p^{\alpha_r},u_rp^{\gamma_r}})
= \langle lc( f), lc(g) \rangle_{new, t_r,u_r},
\]
where $lc(f) = lc( f(z p^{\zeta_f};t_rp^{\alpha_r};u_r p^{\gamma_r}))$ and $lc(g) = lc( g(z p^{\zeta_g};t_rp^{\alpha_r};u_r p^{\gamma_r}))$ for some $\zeta_f$ and $\zeta_g$. That is the leading coefficient of the bilinear form should only depend on the leading coefficients of the functions. Since the left hand side of this equation is symmetric in $f$ and $g$, the right hand side must be as well, which implies that we must take $\zeta_f=\zeta_g$, that is, evaluate both functions around the same values of $z$. As the bilinear form is linear in scalars, it is obvious that the valuation of $\langle f,g\rangle$ depends on the valuations of $f$ and $g$. But if those valuations are given the valuation of the inner product should no longer depend on $f$ and $g$ (at least for generic $f$ and $g$). That is, we should have 
\begin{multline*}
val(\langle f(\cdot;t_rp^{\alpha_r};u_r p^{\gamma_r}),g(\cdot;t_rp^{\alpha_r};u_r p^{\gamma_r}) \rangle_{t_r p^{\alpha_r},u_rp^{\gamma_r}})
\\ = val(f(z p^{\zeta_f};t_rp^{\alpha_r};u_r p^{\gamma_r})) + val(g(z p^{\zeta_f};t_rp^{\alpha_r};u_r p^{\gamma_r})) +C(\alpha,\gamma),
\end{multline*}
for some $C(\alpha,\gamma)$. But as we scaled our bilinear form to have $\langle 1, 1\rangle =1$, we see that $C(\alpha,\gamma)=0$, thus we get 
\begin{multline}\label{eqbilval}
val(\langle f(\cdot;t_rp^{\alpha_r};u_r p^{\gamma_r}),g(\cdot;t_rp^{\alpha_r};u_r p^{\gamma_r}) \rangle_{t_r p^{\alpha_r},u_rp^{\gamma_r}})
\\ = val(f(z p^{\zeta_f};t_rp^{\alpha_r};u_r p^{\gamma_r})) + val(g(z p^{\zeta_f};t_rp^{\alpha_r};u_r p^{\gamma_r})).
\end{multline}
It should be observed that we want an equation like this only for generic functions $f$ and $g$. The case where $f$ and $g$ are orthogonal would clearly violate such genericity (and in that case this equation would not make much sense). But expanding arbitrary functions in our biorthogonal families shows that taking $f$ and $g$ both equal to $\tilde R_n$ (with interchanged parameters $u_0$ and $u_1$) is generic enough. 
Thus we can consider the explicit biorthogonality norm relation
\begin{multline*}
\langle \tilde R_{n}(\cdot; t_0{}:{}t_1,t_2,t_3;u_0,u_1;q;p), \tilde R_{n}(\cdot ; t_0{}:{}t_1,t_2,t_3;u_1,u_0;q;p) \rangle_{t_0,t_1,t_2,t_3,u_0,u_1;q;p}
\\ = \frac{\theta(\frac{p}{u_0u_1};q;p)_{2n}}{\theta(\frac{pq}{u_0u_1};q;p)_{2n}}
\frac{\theta(q,t_2t_3,t_1t_2,t_1t_3, \frac{qt_0}{u_0},\frac{pqt_0}{u_1};q;p)_n}
{\theta(\frac{p}{u_0u_1},t_0t_1, t_0t_3,t_0t_2, \frac{p}{t_0u_1},\frac{1}{t_0u_0};q;p)_n} q^{-n}
\end{multline*}
If we choose $\zeta$ (the location where we are evaluating the biorthogonal functions), then the valuation on the left hand side immediately follows from the valuations of the biorthogonal functions we obtained in the previous section. The valuation on the right hand side can easily be obtained. Thus this relation provides us with an equation between the valuations that must be satisfied in order for the system of biorthogonal functions to have a nice limit. The reason this equation might fail is if 
the valuation on the left hand side is not given by \eqref{eqbilval}. 
With the usual rescaling of the bilinear form the limit of 
$\langle \tilde R_{n}, \tilde R_{n} \rangle$ would then vanish. Thus in that case, either the limits of the biorthogonal functions are not linearly independent, or the limit of the bilinear form is not non-degenerate. Note that it can only be the case that the apparent valuation on the left hand side is less than its actual valuation, which equals the valuation on the right hand side, as only on the left hand side some extra cancellation can happen which we have not considered before.

The valuation of the squared norm on the right hand side is a piecewise linear function of the $\alpha_r$, which is $1$-periodic in all $\alpha_r$ (assuming the balancing condition remains satisfied), by ellipticness of the biorthogonal functions (and thus their norms). However it is rather complicated to write down explicitly for all $\alpha_r$. In the previous section we have seen that we could restrict ourselves to the case $\alpha \in P$ (from Proposition \ref{proppolytopes}), so we will do the same here. In the polytope $P$ the norm has valuation (remember $\alpha_4=\gamma_0$ and $\alpha_5=\gamma_1$)
\begin{multline}\label{eqvalnorm}
n\bigg( -(\gamma_0+\gamma_1) 1_{\{\gamma_0+\gamma_1>0\}} -2(\gamma_0+\gamma_1) 1_{\{\gamma_0+\gamma_1<0\}} +
\sum_{1\leq r<s\leq 3} (\alpha_r+\alpha_s)1_{\{\alpha_r+\alpha_s<0\}}
\\
-\sum_{r=1}^3 (\alpha_r+\alpha_0) 1_{\{\alpha_r+\alpha_0<0\}} + \sum_{r=0}^1 \left(
(\gamma_r-\alpha_0)1_{\{\alpha_0-\gamma_r>0\}}+(\alpha_0+\gamma_r)1_{\{\alpha_0+\gamma_r>0\}}  \right) \bigg).
\end{multline}
However, we are not so much interested in this valuation, as we are in this valuation minus the valuations of the biorthogonal functions which are the arguments of the inner product. The valuations of these biorthogonal functions are given in Proposition \ref{propvaluationbiorthouniv}. Hence if we take the difference and divide by $n$, we find that we obtain an interesting limit if
\begin{equation}\label{eqvalipuniv}
(\gamma_0+\gamma_1)1_{\{\gamma_0+\gamma_1>0\}} + \sum_{0\leq r<s\leq 3} (\alpha_r+\alpha_s) 1_{\{\alpha_r+\alpha_s<0\}}
+\sum_{r=0}^1 -(\zeta+\gamma_r) 1_{\{\gamma_r+\zeta>0\}} + (1+\zeta-\gamma_r) 1_{\{1+\zeta<\gamma_r\}} 
\end{equation}
equals zero. If this is positive the inner product will converge to zero (so we won't get anything interesting), and if this term is negative we must have made a mistake (as in that case a perfectly valid limit on the left diverges on the right). Fortunately a (rather tedious) case analysis shows that latter case indeed never happens.

It remains to determine when this term vanishes. One of the surprising results of the proposition below is that while both conditions ($z$-dependence and vanishing of this piecewise linear term) break the symmetry between the $\alpha_r$'s ($0\leq r\leq 3$) and the $\gamma_r$'s, the final result is symmetric.
\begin{proposition}\label{propsystemsuniv}
Consider the polytope $P^{(0)}$ (as in \cite{vdBR}) given by the bounding inequalities
\[
\alpha_r\geq -\frac12, \qquad 
\alpha_r-\alpha_s\leq 1, \qquad 
\alpha_r+\alpha_s\leq 1, \qquad 
\sum_r \alpha_r =1.
\]
Moreover define $P_{II,t}$ ($P_{II,0} = P_{II}^{(0)}$ from \cite{vdBR}) for $0\leq t\leq 5$ as the polytope
\[
-\frac12\leq \alpha_t\leq 0, \qquad 
\alpha_t\leq \alpha_r\leq 1+\alpha_t, \quad (r\neq t), \qquad 
0\leq \alpha_r+\alpha_s\leq 1, \quad (r\neq s \neq t), \qquad
\sum_r \alpha_r=1.
\]

The set of vectors in $P$, for which each part of the associated limiting pair of the biorthogonal functions is $z$-dependent, and for which \eqref{eqvalipuniv} vanishes consists of all vectors in $P^{(0)}$, outside the interiors of the subpolytopes $P_{II,t}$ ($0\leq t\leq 5$), with
the value of $\zeta$ given by  $|\zeta+\frac12|=\min(\frac12,\alpha_r+\frac12,\alpha_r+\alpha_s+\alpha_t+\frac12)$.
\end{proposition}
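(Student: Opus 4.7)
The strategy is to characterize the set by simultaneously imposing three requirements: the limit of $\tilde R_n(zp^\zeta;t_0p^{\alpha_0}:t_rp^{\alpha_r};u_rp^{\gamma_r})$ must depend on $z$, the limit of its biorthogonal partner (with $u_0,u_1$ swapped, equivalently $\alpha_4\leftrightarrow\alpha_5$) must also depend on $z$, and the piecewise-linear quantity \eqref{eqvalipuniv} must vanish. The first two are read off from Proposition \ref{propzdep}: I would take the disjunction of its four cases, form the $(\alpha_4,\alpha_5)$-symmetric version, and intersect with $P$. Observe already that Proposition \ref{propwhenzdep1univ} pins $|\zeta+\tfrac12|$ to $\min(\tfrac12,\alpha_r+\tfrac12,\tfrac32-\alpha_r-\alpha_s-\alpha_t)$, which via $\sum_r\alpha_r=1$ rewrites to the target formula $\min(\tfrac12,\alpha_r+\tfrac12,\alpha_r+\alpha_s+\alpha_t+\tfrac12)$ and is symmetric in all six $\alpha_r$.

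Next I would analyse the vanishing of \eqref{eqvalipuniv}. Inside $P$ every summand except the first is a non-positive multiple of an indicator, and the text asserts via a tedious case analysis that the whole expression is non-negative on $P$. To keep this under control I would reduce to the ordered case $\alpha_0\leq\alpha_1\leq\cdots\leq\alpha_5$ (respecting the pair-symmetry in $(\alpha_4,\alpha_5)$), and use the $P$-inequalities $\alpha_i+\alpha_j\leq 1$ and $\tfrac12+\alpha_i+\alpha_j+\alpha_k\geq|\zeta+\tfrac12|$ to balance the positive contribution from $\gamma_0+\gamma_1$ against the negatives. Once non-negativity is in hand, vanishing forces each summand to be zero, yielding $\gamma_0+\gamma_1\leq 0$, $\alpha_r+\alpha_s\geq 0$ for $0\leq r<s\leq 3$, $\zeta+\gamma_r\leq 0$ and $1+\zeta-\gamma_r\geq 0$ for $r=0,1$. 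After symmetrising in $(\alpha_4,\alpha_5)$ (required by the pair condition) these collapse to $\alpha_r+\alpha_s\geq 0$ for all $r<s$; combined with the inherited $\alpha_r+\alpha_s\leq 1$ and $\sum\alpha_r=1$, and with $|\zeta+\tfrac12|$ fixed by the formula above, one recovers exactly the defining inequalities of $P^{(0)}$.

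Finally I would identify the "except in the interior" exclusions. In the open interior of $P_{II,t}$ the defining inequalities are strict and correspond to the third or fourth bullet of Proposition \ref{propzdep} with the exception clause active for at least one member of the biorthogonal pair (the index $4$ of Proposition \ref{propzdep} becomes $5$ for the partner), so one of the two limits is $z$-independent and the vector is excluded. Conversely, on the relative boundary of $P_{II,t}$ the vector either crosses into an adjacent tile of the full $P^{(0)}$ stratification where $z$-dependence holds for both functions, or it meets the half-space $\alpha_4+|\zeta+\tfrac12|\geq\tfrac12$ of the first bullet, where $z$-dependence is automatic. The main obstacle will be the non-negativity verification for \eqref{eqvalipuniv} on all of $P$; I expect the ordered-coordinate reduction, together with the $E_6$ root-lattice symmetries underlying Proposition \ref{proppolytopes}, to reduce this to a handful of generic sub-cases rather than a full enumeration of sign patterns among the many indicator functions.
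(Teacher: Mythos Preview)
There is a genuine gap in the central step. You argue that since \eqref{eqvalipuniv} is non-negative on $P$ and the first summand is the only non-negative one, vanishing of the total forces each individual summand to vanish. This inference is invalid: a single non-negative term can cancel several non-positive ones, and the paper's own analysis shows this actually occurs. In the paper's case decomposition (their cases (iv), (v), (vi), corresponding to the tiles $P_{III,(r,s,t)}$), the expression \eqref{eqvalipuniv} linearizes not to the trivial identity $0=0$ but to a genuine linear relation among the $\alpha_r$ and $\zeta$ that happens to hold identically on the relevant face; in those regions one has, e.g., $\alpha_0+\alpha_1<0$ or $\gamma_0+\gamma_1>0$, yet \eqref{eqvalipuniv} still vanishes. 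Concretely, a point such as $(\alpha_0,\ldots,\alpha_5)=(-0.3,-0.3,0.3,0.3,0.5,0.5)$ lies in $P^{(0)}$ outside all interiors of $P_{II,t}$ and must belong to the claimed set, but your conclusion $\alpha_r+\alpha_s\geq 0$ for all pairs would exclude it. In fact the conditions you derive cut out a polytope strictly smaller than $P^{(0)}$ (roughly, the tile $P_I$ together with a neighbourhood), so the argument cannot recover the full statement.

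A secondary issue: you pin $|\zeta+\tfrac12|$ to the minimum formula by citing Proposition~\ref{propwhenzdep1univ}, but that proposition concerns only the $E^1$ factor. Proposition~\ref{propzdep} shows the full biorthogonal function can be $z$-dependent for other values of $\zeta$ as well (its first bullet gives an entire half-space). The paper handles this by showing that if both functions are $z$-dependent and $|\zeta+\tfrac12|\neq\min(\cdots)$, then one is forced into their case C, where the vanishing of \eqref{eqvalipuniv} then \emph{derives} $|\zeta+\tfrac12|=\min(\cdots)$; this step is not free. The paper's proof therefore proceeds by a full case analysis: twenty linear regions for \eqref{eqvalipuniv} (five sign patterns for $\alpha_r+\alpha_s$ times four for the $\gamma_r$-$\zeta$ relations), cross-classified against six cases for which term achieves the minimum, with separate treatment of the facets of $P_{II,0}$ and $P_{II,4}$. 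Your hoped-for shortcut via ``each summand vanishes'' does not survive, and something close to the paper's enumeration appears to be necessary.
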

\begin{proof}
Note that if $\gamma_0+\gamma_1\leq 0$ then the sum \eqref{eqvalipuniv} consists of no terms (and thus equals 0). Indeed we find for $0\leq r<s\leq 3$ that $\alpha_r+\alpha_s=1-\alpha_t-\alpha_u-(\gamma_0+\gamma_1) \geq 0$ (where $t$ and $u$ are such that $\{r,s,t,u\}=\{0,1,2,3\}$). And moreover $\gamma_0 \leq -\gamma_1 \leq \frac12 - |\zeta+\frac12|$ thus 
$\gamma_0 +\zeta\leq 0$ and $\gamma_0 \leq \zeta+1$ and similarly for $\gamma_1$.

Moreover we see that at most three terms in the sum $\sum_{0\leq r<s\leq 3} \cdots$ can be non-zero, as an argument as before gives that either $\alpha_0+\alpha_1 \geq 0$ or $\alpha_2+\alpha_3\geq 0$ (or both). This gives us 5 options up to symmetry: 
\begin{enumerate}
\item $\alpha_r+\alpha_s\geq 0$ for all $0\leq r<s\leq 3$;
\item $\alpha_0+\alpha_1 \leq 0$ and the sum of all other pairs is positive;
\item $\alpha_0+\alpha_1\leq 0$, $\alpha_0+\alpha_2\leq 0$,  and the sum of all other pairs is positive;
\item $\alpha_0+\alpha_r \leq 0$ for $1\leq r\leq 3$ and the sum of all other pairs is positive;
\item $\alpha_r+\alpha_s\leq 0$ for $0\leq r<s\leq 2$ and the sum of all other pairs is positive.
\end{enumerate}
For the $\sum_{r=0}^1 \cdots$ part we also note that we cannot have both $\gamma_0+\zeta>0$ and $1+\zeta<\gamma_1$ as
$\gamma_0+\gamma_1\leq 1$. So there we have at most 2 non-zero terms and, up to symmetry (where we use $\zeta\to 1-\zeta$ symmetry to choose $-\frac12\leq \zeta\leq 0$), 4 options:
\begin{enumerate}
\item[(A)] $\gamma_0,\gamma_1 \leq -\zeta, 1+\zeta$;
\item[(B)] $\gamma_1 \leq -\zeta \leq  \gamma_0 \leq 1+\zeta$;
\item[(C)] $-\zeta \leq  \gamma_0,\gamma_1 \leq 1+\zeta$;
\item[(D)] $\gamma_1 \leq -\zeta  \leq 1+\zeta \leq \gamma_0$
\end{enumerate}
Within each of the $5\times 4=20$ cases described by the above conditions the expression in \eqref{eqvalipuniv} becomes linear, and thus reduces to a simple linear condition (though in case $1A$, i.e. the intersection of cases 1 and $A$, the condition is $0=0$). 

Now we recall from Proposition \ref{propzdep} that if the limit of the biorthogonal functions on both sides of the bilinear form is $z$-dependent and $|\zeta+\frac12| \neq \min(\frac12,\alpha_r+\frac12,\alpha_r+\alpha_s+\alpha_t+\frac12)$ (where we allow $0\leq r<s<t\leq 5$) then we must have  $\frac12-|\zeta+\frac12|\leq \gamma_0, \gamma_1$, where we need both inequalities to ensure both elements of the pair are $z$-dependent. This puts us certainly in case C above. For each of the five different case for the signs of $\alpha_r+\alpha_s$ ($0\leq r<s\leq 3$) the condition that the norm of the  bilinear form has the right valuation gives us an equation for $|\zeta+\frac12|$. 
Inspection learns that in each case this equation is $|\zeta+\frac12|=\min(\frac12,\alpha_r+\frac12,\alpha_r+\alpha_s+\alpha_t+\frac12)$. 
To observe this we would like to remark that the inequalities in $P$ imply $|\zeta+\frac12|\leq \min(\frac12,\alpha_r+\frac12,\alpha_r+\alpha_s+\alpha_t+\frac12)$, so if, for example, in 1C the equation \eqref{eqvalipuniv} reduces to $|\zeta+\frac12|=\frac12$, as a corollary we obtain that $\frac12 = \min(\frac12,\alpha_r+\frac12,\alpha_r+\alpha_s+\alpha_t+\frac12)$.
From now on we can thus assume that $|\zeta+\frac12|=\min(\frac12,\alpha_r+\frac12,\alpha_r+\alpha_s+\alpha_t+\frac12)$ holds. 

It should now be observed that $P^{(0)}$ is the projection of $P$ to the space of $(\alpha_0,\ldots,\alpha_5)$. Thus now we have to show that for any vector in $P^{(0)}$, except in the interior of one of the $P_{II,t}$'s we do satisfy all conditions, while in the interior of the $P_{II,t}$'s one of the conditions fails. To show this we perform a case-by-case analysis. Our cases here depend on which of the following expressions (up to $S_4\times S_2$ symmetry) is minimal
\renewcommand{\theenumi}{\roman{enumi}}
\begin{enumerate}
\item $\frac12$;
\item $\alpha_0+\frac12$;
\item $\gamma_0+\frac12$;
\item $\alpha_0+\alpha_1+\alpha_2+\frac12$;
\item $\alpha_0+\alpha_1+\gamma_0+\frac12$;
\item $\alpha_0+\gamma_0+\gamma_1+\frac12$.
\end{enumerate} 
Note that $P_{II,t}$ is exactly the polytope in which $\alpha_t+\frac12$ is minimal. Thus we want to check that for
every vector in the facet of $P$ given by $term= |\zeta+\frac12|$ for term given by cases (i) and (iv)-(vi) above, the limits of the biorthogonal functions is $z$-dependent and the valuation of the bilinear form is correct (i.e. \eqref{eqvalipuniv} vanishes). Moreover we want to check that outside the interior of the facets of $P$ given by $\alpha_0+\frac12 =|\zeta+\frac12|$ or
$\gamma_0 + \frac12 = |\zeta+\frac12|$ the same holds, while in the interior either \eqref{eqvalipuniv} does not vanish (case (ii)), or the limits of the biorthogonal functions are not $z$-dependent (case (iii)).

In cases (i), (iv)-(vi) it turns out that we can identify which of the 20 (up to symmetry) linear pieces of \eqref{eqvalipuniv} we are (respectively 1C, 5C, 2B and 1A), and notice that the resulting linear equation is identically true. Moreover Proposition \ref{propzdep} shows that in these cases both biorthogonal functions have $z$-dependent limits, thus in these cases we are fine.

In case (iii), corresponding to the polytope $P_{II,4}$, we are in the linear piece 1A or 1B of \eqref{eqvalipuniv}, depending on the sign of $\gamma_0+\gamma_1$. In both cases we see that this linear function vanishes. However it is now obvious from Proposition \ref{propzdep} that in the interior of this polytope the first of the two biorthogonal functions has a $z$-independent limit (the second biorthogonal function has a $z$-dependent limit in this interior only if $\gamma_1 + |\zeta+\frac12| \geq \frac12$, or equivalently $\gamma_0+\gamma_1\geq 0$). Outside the interior the limits are all $z$-dependent.
%

Finally case (ii), where $\alpha_0+\frac12$ is minimal and equals $|\zeta+\frac12|$ (i.e. we are in $P_{II,0}$). First note that the functions are clearly $z$-dependent outside the interior of $Q_0$. Let us now list all facets of $P_{II,0}$ up to the $(S_1\times S_3) \times S_2$ symmetry on the vectors $(\alpha_0,\ldots,\alpha_5)$. 
\begin{itemize}
\item $\alpha_0=-1/2$, we are in case 4A
\item $\alpha_0=0$, we are in case 1C
\item $\alpha_0=\alpha_1$, we are in case 2C
\item $\alpha_0=\gamma_0$, we are in case 1B
\item $1+\alpha_0=\alpha_3$, we are in case 3A
\item $1+\alpha_0=\gamma_1$, we are in case 4B/4D (in the intersection of these two)
\item $\alpha_1+\alpha_2=0$, we are in case 3C/5C  (in the intersection of these two)
\item $\alpha_1+\gamma_0=0$, we are in case 2B
\item $\gamma_0+\gamma_1=0$, we are in case 1A
\item $\alpha_2+\alpha_3=1$, we are in case 2A
\item $\alpha_3+\gamma_1=1$, we are in case 3B
\item $\gamma_0+\gamma_1=1$, we are in case 4C
\end{itemize}
In all these cases we see that the valuation equation \eqref{eqvalipuniv} linearizes and a direct calculation shows that it indeed vanishes. Thus outside the interior, all faces of $P_{II,0}$ correspond to a valid biorthogonal system. 

Finally we need to show that in the interior of $P_{II,0}$ the valuation equation \eqref{eqvalipuniv} does not vanish. Thus we check all 20 polytopes on which \eqref{eqvalipuniv} linearizes, add the equation $\alpha_0+\frac12 = |\zeta+\frac12|$ together with the vanishing of \eqref{eqvalipuniv}, and check whether these equations can be satisfied outside one of the facets of $P_{II,0}$. To save us some work we note that in $P_{II,0}$ we have $\alpha_1+\alpha_2\geq 0$, so we can only be in case 5 if we are on a boundary of that case which is also covered in case 3. Moreover we note that $\gamma_0 \leq \alpha_0+1=\frac12 + |\zeta+\frac12|$, so we can also only be on the boundary of case D which is also covered in case B. Thus we only need to consider the 12 cases 
1234ABC. Apart from case 1A the equation that \eqref{eqvalipuniv} vanishes is exactly the defining equation of a facet of $P_{II,0}$. In case 1A on the other hand, we see that \eqref{eqvalipuniv} vanishes as long as $\gamma_0+\gamma_1\leq 0$. As $\gamma_0+\gamma_1\geq 0$ is a bounding equation for $P_{II,0}$ this implies that we must have $\gamma_0+\gamma_1=0$, which again determines a facet of $P_{II,0}$. Thus we see that \eqref{eqvalipuniv} vanishes nowhere in the interior of $P_{II,0}$.
\end{proof}

We now know for which vectors the limit leads to two families of $z$-dependent functions with correct valuations so that we can hope that the limit from the elliptic hypergeometric level leads to a biorthogonality relation between the limiting functions. There are two more questions we need to answer. First of all we want to show that, indeed, for each of these vectors we do get a biorthogonal system. We do this by explicitly defining the limiting measures and showing that we really can take a limit in the biorthogonality relation at the elliptic hypergeometric level. Secondly we want to know what \textit{different} limits correspond to these vectors. As we saw in \cite{vdBR} taking limits along different vectors often give the same functions, so we want to tabulate what different limits we can obtain.

The polytope $P$ is just the intersection of the polytope $P^{(1)}$ with a given hyperplane ($\alpha_7+\alpha_8=0$), corresponding to the set of parameters in the $E_1$ of \eqref{eqrluniv}, thus the limit of the $E_1$ depends only on the face of $P$ the vector $(\alpha;\zeta)$ is contained in, and different faces of $P$ lead to different limits of the $E_1$. The polytope $P^{(0)}$ is the projection of the polytope $P$ by ignoring $\zeta$. In this projection the inverse image of a face of $P^{(0)}$ correspond to a union of faces in $P$, and not always a single face of $P$. In particular, limits associated to vectors in the same face of $P^{(0)}$ can still have different values. The point is that in the equation  $|\zeta+\frac12|=\min(\frac12,\frac12 + \alpha_r, \frac12+\alpha_r+\alpha_s+\alpha_t)$, $\zeta$ does not depend linearly on the vector $\alpha$ within each face of $P^{(0)}$. However, if we restrict ourselves to the different facets of $P$ given by $\zeta+\frac12=\frac12$, $\zeta+\frac12=\frac12+\alpha_r$ (for $0\leq r\leq 5$) and $\zeta+\frac12=\frac12+\alpha_r+\alpha_s+\alpha_t$ (for $0\leq r<s<t\leq 5$), and take the projections of these facets (called resp. $P_{I}$, $P_{II,t}$ (the same as above) and $P_{III,(r,s,t)}$), the different faces in the projection will lead to different limits for the $E_1$. By inspection we can see that the limits of the prefactor in \eqref{eqrluniv} also only depend on which face of one of these projections the vector is in. Therefore we see that 
the limit of the biorthogonal functions and their squared norm formula only depend on which face of $P_{I}$, $P_{II,t}$ and $P_{III,(r,s,t)}$ contains the vector $\alpha$ (if we assume $\zeta$ is given by the unique value for which we can have a biorthogonal system as limit). It could conceivably be that while the limits of the $E_1$'s and of the prefactors is different in two different faces, the product of the limit of the $E_1$ and the prefactor (i.e. the limit of the biorthogonal functions themselves) is the same on two different faces. By inspecting the different limits associated to the faces and considering basic properties of these limits (symmetries, locations of poles), we can exclude this possibility. Thus we obtain the following theorem.
\begin{theorem}\label{thmwhensystem}
Define the polytopes $P_I$, $P_{II,t}$ ($0\leq t\leq 5$) and $P_{III,(r,s,t)}$ ($0\leq r<s<t\leq 5$) as in \cite{vdBR}, that is 
\begin{itemize}
\item $P_{I}$ is given by the bounding equations
\[
\alpha_r \geq 0, \qquad \sum_{r=0}^5 \alpha_r = 1.
\]
\item $P_{II,t}$ is given by the bounding equations
\[
-\frac12 \leq \alpha_t\leq 0, \qquad 
\alpha_t \leq \alpha_r \leq 1+\alpha_t \quad (r\neq t), \qquad 
0\leq \alpha_r + \alpha_s \leq 1 \quad (r,s\neq t), \qquad 
\sum_{r=0}^5 \alpha_r = 1.
\]
\item $P_{III,(r,s,t)}$ is given by the bounding equations 
\[
\alpha_a+\alpha_b \leq 0 \ \ (a, b \in \{r,s,t\}), \quad
-(\alpha_r+\alpha_s+\alpha_t) \leq \alpha_a \leq 1+(\alpha_r+\alpha_s+\alpha_t) \ \  (a\not \in \{r,s,t\}), \quad
\sum_{a=0}^5 \alpha_a =1.
\]
\end{itemize}

Then these polytopes tile $P^{(0)}$. The leading coefficients of the triple
\[
(\tilde R_l(z;t_r;u_0,u_1;z), \tilde R_l(z;t_r;u_1,u_0), \langle  R_l(\cdot;t_r;u_0,u_1), R_l(\cdot;t_r;u_1,u_0)\rangle_{t_r,u_r}),
\]
when we replace $t_r\to t_rp^{\alpha_r}$, $u_r \to p^{\alpha_{r+4}}$ and $z\to p^{\zeta}$ when $\zeta$ is determined as a function of $\alpha$ by $\zeta+\frac12 = \min(\frac12, \frac12 +\alpha_r, \frac12 + \alpha_r+\alpha_s+\alpha_t)$, for a vector $\alpha \in P^{(0)}$ is determined by the face of the tiling of $P^{(0)}$ given above which contains $\alpha$. 

If two faces are related by a shift from the translation group $T$ from Proposition \ref{proppolytopes} the corresponding triples have identical leading coefficients. If two faces are related by the action of $S_4\times S_2$ on $P^{(0)}$ the leading coefficients are equal up to the same permutation of the parameters $t_r$ and $u_r$. If the faces are related by the flip of Proposition \ref{proppolytopes} (followed by a translation in $T$), the leading coefficients of the biorthogonal functions are related by setting $t_r\to 1/t_r$, $u_r\to 1/u_r$ and $q\to 1/q$, in this case the squared norms are identical.
\end{theorem}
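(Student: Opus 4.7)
The plan is to assemble the theorem from three ingredients already made available in the preceding propositions: the tiling statement is a restatement of a result from \cite{vdBR}, the ``depends only on the face'' statement uses Proposition \ref{proppolytopes} together with the explicit form \eqref{eqrluniv}, and the symmetry statements are bookkeeping using Lemma \ref{lemRtildeabeluniv}. I would begin by invoking \cite{vdBR} for the tiling of $P^{(0)}$ by $P_I$, the $P_{II,t}$, and the $P_{III,(r,s,t)}$; the statement of Proposition \ref{propsystemsuniv} already identifies these tiles as the projections to $\alpha$-space of facets of $P$ on which the $\zeta$-constraint $|\zeta+\tfrac12|=\min(\tfrac12,\tfrac12+\alpha_r,\tfrac12+\alpha_r+\alpha_s+\alpha_t)$ is active, so the tiling is essentially inherited.

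Next I would explain why the leading coefficient of the triple is constant on each face and varies between faces. For this I would use the factorization \eqref{eqrluniv} of $\tilde R_n$ as an $E^1$ times an explicit prefactor. By Proposition \ref{proppolytopes} and the results of \cite{vdBR}, the leading coefficient of the $E^1$ depends only on which face of $P^{(1)}$ (hence, after restriction and projection, which face of $P_I\cup P_{II,t}\cup P_{III,(r,s,t)}$) contains the parameter vector. The prefactor is a product of theta functions with arguments $p^\chi x$; by the explicit formula for $lc(\theta(xp^{\alpha};p))$ derived in Section 2, its leading coefficient is also piecewise constant in $\alpha$, with the same stratification. The analogous statement for the squared norm follows from its explicit product form in the biorthogonality theorem. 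Constancy on a face is then immediate, while genuine distinctness between faces requires showing that no accidental cancellation merges two faces. I would handle this by a direct inspection argument, as the authors suggest: the leading coefficients of the biorthogonal functions have poles and zeros whose locations (in $z$) are determined by which factors of the prefactor survive in the limit, and these location patterns differ between faces.

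For the symmetry assertions I would use Lemma \ref{lemRtildeabeluniv} directly. The translations in $T$ are generated by the $p$-periodicity in the individual $t_r$, $u_r$ (together with the simultaneous half-shift that appears in the first identity of Lemma \ref{lemRtildeabeluniv}); all these leave $\tilde R_n$ unchanged as a function of $z$ and $p$, hence leave the triple invariant. The $S_4\times S_2$ action on $P^{(0)}$ corresponds to relabeling the $t_r$ (which are symmetric up to a constant, as noted after the normalization \eqref{eqnormRuniv}) and interchanging $u_0,u_1$ (which swaps the two families of the biorthogonal pair); both lift to permutations of the parameters in the leading coefficients. The flip symmetry corresponds exactly to \eqref{eqinvquniv}, composed with the translations already accounted for; applying $lc$ to both sides of \eqref{eqinvquniv} and to the corresponding identity for the squared norm (which is invariant under $q\to 1/q$ combined with the parameter inversion, as can be read off from its explicit product form) yields the claimed relation.

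The hard part is the non-collapse claim, that two faces never yield the same triple by accident. The cleanest way to finish that, rather than a case-by-case inspection, would be to argue that the triple, viewed as a piecewise-constant function of $\alpha$ on $P^{(0)}$, is locally constant on the interiors of the tiling faces and genuinely jumps across each wall: the walls are exactly the loci where one of the piecewise-linear expressions for $val$ becomes non-smooth, and on each side a distinct set of theta-factor limits contributes, producing a different pole/zero pattern in $z$ for the biorthogonal functions. The bookkeeping of which factors survive on which side is routine but somewhat tedious, and is the only place where one truly has to work through all the tiles individually.
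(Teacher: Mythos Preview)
Your proposal is correct and follows essentially the same route as the paper's argument, which appears as the discussion immediately preceding the theorem rather than as a separate proof block: the tiling is inherited from \cite{vdBR} via projection of the $\zeta$-facets of $P$, the face-dependence of the triple is obtained by splitting $\tilde R_n$ into the $E^1$ factor (handled by \cite{vdBR}) and the explicit prefactor (handled by inspection of its theta-function form), distinctness across faces is settled by inspecting pole locations and symmetries of the resulting limits, and the symmetry assertions come from Lemma~\ref{lemRtildeabeluniv}. The only minor divergence is that you float a ``local jump across walls'' alternative to the case-by-case distinctness check; the paper simply does the inspection, and in practice your suggested alternative still reduces to the same bookkeeping, as you yourself note.
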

We are left with the problem that we want to find (at least one) explicit bilinear form turning each of these cases into a biorthogonal system. It turns out that in \cite{vdBR} we determined limits of the elliptic beta integral evaluation (i.e. the equation $\langle 1,1\rangle =1$) for parameters specialized like this as $p\to 0$. The kind of limit depended precisely on the face the vector $\alpha$ was in in the tiling of $P^{(0)}$ as in the above theorem. These limits all essentially involved evaluating the integrand at $zp^{\zeta}$ for $z$ in some set which is independent of $p$ and $\zeta$ such that $|\zeta+\frac12| = \min(\frac12,\alpha_r+\frac12,\alpha_r+\alpha_s+\alpha_t + \frac12)$, that is, exactly where we want to evaluate our biorthogonal functions. 
It turns out that the exact same limits are still valid if we plug in the biorthogonal functions. Thus in all the cases mentioned in the theorem above we obtain at least an algebraic measure giving biorthogonality. In the next section we will describe these limiting measures briefly. 

In the cases of biorthogonal functions associated to a vector not contained in any translate of $P^{(0)}$, that is, the flips of $0022pp$, $04as$ (i.e. Continuous $q^{-1}$-Hermite), and $0031as$ (i.e. Stieltjes-Wiegert),  we can obtain measures by taking limits outside this polytope. These measures are related to bilateral series. See \cite[Section 5]{vdBRmeas} for details.

It should be noted that the measures obtained are not necessarily positive. Of course you would only hope this were the case if you were considering orthogonal polynomials (i.e. cases where the limits of both families of biorthogonal functions are identical) and the squared norms were positive. However even in those cases the measures we find are not always positive. We believe you can find positive measures in those cases by taking limits from the elliptic level by looking at limits outside the polytope as in \cite[Section 9]{vdBR}. 

Finally we should make a few remarks about explicitly obtaining the limits of the biorthogonal functions. Our description so far shows that we can obtain a limit by first rewriting the biorthogonal functions as beta integrals. The resulting limit will often again be a singular integral, which reduces to a finite sum of residues. This is far from our desired method of looking at the defining sum \eqref{equnivbiortho} for the biorthogonal functions and replacing sum and limit. Unfortunately the latter method does not always work (essentially because the valuation of the sum is more than the valuation of the summands). However due to the $W(E_7)$-symmetry of the elliptic hypergeometric series there are many different ways of writing the biorthogonal functions as a series. In \cite{vdBRmult} we show that 8 of these representations generalize to the multivariate level, and at least one of those admits interchanging sum and limit. In particular, if we want to do explicit calculations we do not have to make the detour through singular beta integrals in order to get expressions for the limiting biorthogonal functions.

Finding all different biorthogonal systems of rational hypergeometric functions which are limits of the elliptic hypergeometric biorthogonal functions has now been reduced to a combinatorial exercise of writing down the different faces of the given tiling of $P^{(0)}$ modulo the symmetry group $S_4\times S_2$. Each such face corresponds to a pair of limiting biorthogonal functions and a corresponding limit of the measure. If two faces are related by a translation from $T$ in Proposition \ref{proppolytopes} the resulting biorthogonal functions are identical, but we will obtain different measures. If they are related using the flip of Proposition \ref{proppolytopes} we again have the same functions, but different measures, some for $|q|<1$ and others for $|q|>1$. In Appendix \ref{secsystemlimit} we give a complete list of all different biorthogonal systems we obtain in this way and describe how they are related using these two kinds of symmetries. 

\section{Measures}
In this section we briefly list the measures associated to vectors within the polytope $P$. The results in here are slight generalizations of the results in \cite{vdBR}, where we considered the limits of, essentially, $\langle 1,1\rangle$. Plugging in functions does not significantly alter the arguments which shows that these limits are valid, as the functions we consider converge uniformly on compact sets outside some isolated points, by definition. \cite{vdBRmeas} is devoted to extending this section to the multivariate case, and includes detailed proofs of the results.

One important idea is to remember that there exist $m_f$ and $m_g$ such that 
\[
\hat f(z) := \frac{\Gamma(u_0 z^{\pm 1})}{\Gamma(u_0 q^{-m_f} z^{\pm 1})} f(z), \qquad 
\hat g(z):=\frac{\Gamma(u_1 z^{\pm 1})}{\Gamma(u_1 q^{-m_g} z^{\pm 1})} g(z)
\]
are holomorphic. The bilinear form can then be expressed as 
\begin{align*}
\langle  f,g\rangle_{t_0,t_1,t_2,t_3,t_4,t_5:q;p}  &= 
\frac{(q;q) (p;p) }{2 \prod_{0\leq r<s\leq 5} \Gamma(t_rt_s;p,q)} 
\\ & \qquad \times
 \int_{C}
\hat f(z) \hat g(z)
 \frac{\prod_{r=0}^3 \Gamma(t_rz^{\pm 1};p,q)
 \Gamma(u_0 q^{-m_f} z^{\pm 1},u_1 q^{-m_g} z^{\pm 1}) }{\Gamma(z^{\pm 2};p,q)} \frac{dz}{2\pi i z},
\end{align*}
that is, the same measure with slightly different parameters, multiplied by a holomorphic uniformly converging (as $p\to 0$) function. For the integral limits below one will only see this idea reflected in the choice of contour, but for the series limits we need to actually consider these new functions. It is useful to introduce the notation 
\[
\tilde t_r := t_r, \quad (0\leq r\leq 3), \qquad 
\tilde t_4 := \tilde u_0 := q^{-m_f} u_0, \qquad
\tilde t_5 := \tilde u_1 :=  q^{-m_g} u_1,
\]
for these new parameters. Moreover we introduce $m_i$ for $0\leq i\leq 5$ as $m_i=0$ if $0\leq i\leq 3$ and $m_4=m_f$ and $m_5=m_g$.
 Notice that the $lc(f)$ and $lc(\hat f)$ are related via 
\[
lc(\hat f)(z) = lc(f)(z)
(\tilde u_0/z;q)_{m_f}
^{1_{\gamma_0-\zeta\in \mathbb{Z}}} 
(\tilde u_0 z;q)_{m_f}
^{1_{\gamma_0+\zeta\in \mathbb{Z}}}
\left( (-\frac{qz}{u_0})^{m_f} q^{\binom{m_f}{2}} \right)^{\lfloor \gamma_0-\zeta\rfloor}
\left( (-\frac{q}{u_0z})^{m_f} q^{\binom{m_f}{2}} \right)^{\lfloor \gamma_0+\zeta\rfloor}.
\]
Since $\hat f$ is holomorphic, we know that $lc(\hat f)$ is holomorphic, so this equation gives the possible locations of poles of $lc(f)$.

The first proposition is the analogue of \cite[Proposition 4.1]{vdBR}, and describes Nasrallah-Rahman type bilinear forms, 
\begin{proposition}\label{propPI}
Choose generic parameters satisfying $\prod_r t_r=q$.
Let $\alpha \in \mathbb{R}^6$, $\sum_{r=0}^5 \alpha_r=1$ and $\alpha_r\geq 0$ for $0\leq r\leq 5$. Let $f\in \tilde A(t_4p^{\alpha_4})$ and $g\in \tilde A(t_5p^{\alpha_5})$.


We now have the limit
\begin{align*}
\lim_{p\to 0} p^{-val(f)-val(g)}  \langle f(z;t_rp^{\alpha_r}),g(z;t_rp^{\alpha_r})\rangle_{t_r p^{\alpha_r}} & = \frac{(q;q) \prod_{0\leq r<s\leq 5: \alpha_r+\alpha_s=0} (t_rt_s;q)}
{2 \prod_{0\leq r<s\leq 5:\alpha_r+\alpha_s=1}(q t_r^{-1}t_s^{-1});q)} \\ &\qquad  \times 
\int_{C} lc(f)(z) lc(g)(z) 
 \frac{(z^{\pm 2};q) \prod_{r:\alpha_r=1} (q t_r^{-1} z^{\pm 1};q)}{\prod_{r:\alpha_r=0} (t_rz^{\pm 1};q)} \frac{dz}{2\pi i z},
\end{align*}
where $lc(f)=lc(f(z;p^{\alpha_r}t_r))$ and likewise for $lc(g)$.
Here the integration contour $C=C^{-1}$ is such that it includes the points $q^j \tilde t_r$, (for $0\leq r\leq 5$ with $\alpha_r=0$ and $j\geq 0$) and  excludes their reciprocals. The contour can be taken to be the unit circle if $|\tilde t_r|<1$ for all $r$ with $\alpha_r=0$.
\end{proposition}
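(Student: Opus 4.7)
The strategy is to apply $p\to 0$ termwise inside the contour-integral definition of $\langle f,g\rangle$, using the elementary elliptic-gamma asymptotics
\[
\lim_{p\to 0}\Gamma(xp^{\beta};p,q) = \begin{cases} 1/(x;q), & \beta=0,\\ 1, & 0<\beta<1,\\ (q/x;q), & \beta=1,\end{cases}
\]
which follow directly from the infinite-product definition. Since $\sum_r\alpha_r=1$ with all $\alpha_r\geq 0$, every individual $\alpha_r$ and every pairwise sum $\alpha_r+\alpha_s$ lies in $[0,1]$, so this trichotomy handles every elliptic gamma factor appearing in the bilinear form.

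First I would rewrite $\langle f,g\rangle$ via the $\hat f,\hat g$ substitution displayed just before the statement, producing a Nasrallah--Rahman-type integrand in which all six parameters $\tilde t_0,\ldots,\tilde t_5$ (with $\tilde t_4=\tilde u_0$, $\tilde t_5=\tilde u_1$) are treated on an equal footing, multiplied by the holomorphic and uniformly $p$-convergent factor $\hat f(z)\hat g(z)$. Then I would fix, for sufficiently small $p$, a symmetric contour $C$ independent of $p$ enclosing the geometric progressions $\tilde t_r q^j$ (for those $r$ with $\alpha_r=0$) together with their $p$-shifts and excluding their reciprocals; when $|\tilde t_r|<1$ for every such $r$ the unit circle works.

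Applying the trichotomy pointwise on $C$ gives: each $\Gamma(t_r z^{\pm 1}p^{\alpha_r};p,q)$ in the integrand tends to $1/(t_rz^{\pm 1};q)$, to $1$, or to $(qt_r^{-1}z^{\pm 1};q)$ according as $\alpha_r$ equals $0$, lies in $(0,1)$, or equals $1$; further $1/\Gamma(z^{\pm 2};p,q)\to (z^{\pm 2};q)$. The same trichotomy applied pair-by-pair to $\prod_{r<s}\Gamma(t_rt_sp^{\alpha_r+\alpha_s};p,q)$, together with $(p;p)\to 1$, produces the stated prefactor $(q;q)\prod_{\alpha_r+\alpha_s=0}(t_rt_s;q)/[2\prod_{\alpha_r+\alpha_s=1}(qt_r^{-1}t_s^{-1};q)]$. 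The uniform convergence of $\hat f\hat g$ on $C$ (since $\hat f,\hat g\in\tilde A\subset M$) together with a dominated-convergence argument justifies the interchange of limit and integral. Finally, the correction formula relating $lc(\hat f)$ to $lc(f)$ displayed just above the proposition (specialized to $\zeta=0$) converts the $lc(\hat f)lc(\hat g)$ appearing in the limit back into $lc(f)lc(g)$, with the excess factors cancelling cleanly against the $p\to 0$ limits of $\Gamma(\tilde u_0z^{\pm 1};p,q)\Gamma(\tilde u_1z^{\pm 1};p,q)$.

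The main obstacle is the uniform-convergence and pole-control step: on the fixed contour $C$ one must rule out any pole of the $p$-scaled integrand migrating onto or accumulating on $C$ as $p\to 0$. The $P_I$ hypothesis $\alpha_r\geq 0$ is exactly what keeps the poles of each $\Gamma(t_rz^{\pm 1}p^{\alpha_r};p,q)$ either on the prescribed side of $C$ (when $\alpha_r=0$) or receding to $0$ or $\infty$ (when $\alpha_r>0$), while the $\hat f,\hat g$ substitution absorbs the dangerous $u_0,u_1$-pole sequences of $f,g$ themselves into holomorphic factors. Once this is in place, the valuation bookkeeping of Section~\ref{seclimsys} — in particular the identity $val(\langle f,g\rangle)=val(f)+val(g)$ in $P_I$ — guarantees that the rescaling $p^{-val(f)-val(g)}$ extracts exactly the leading coefficient stated in the proposition.
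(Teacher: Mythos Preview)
Your approach is essentially what the paper does: it does not give a standalone proof but refers to \cite[Proposition 4.1]{vdBR} for the $\langle 1,1\rangle$ case and remarks that inserting $f$ and $g$ ``does not significantly alter the arguments \ldots\ as the functions we consider converge uniformly on compact sets outside some isolated points,'' together with the $\hat f,\hat g$ substitution displayed just before the proposition to absorb the poles. Your termwise elliptic-gamma trichotomy, fixed $p$-independent contour, and dominated-convergence justification are exactly the content of that referenced argument, so your proposal matches the paper's intended proof, only spelled out in more detail.
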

Notice that, while it is obvious that we have to consider the leading coefficient of $f$ and $g$ as the $t_r$ change with $p$ according to $p^{\alpha_r} t_r$, we leave the $z$'s invariant with $p$, i.e., we have $\zeta=0$. Note that we therefore exactly have that $|\zeta+\frac12| = \min(\frac12,\frac12 + \alpha_r, \frac12+\alpha_r+\alpha_s+\alpha_t)$, i.e. we evaluate the functions at the right $\zeta$, given $\alpha$ (according to Theorem \ref{thmwhensystem}). In the following propositions we will have different values of $\zeta$, but one can easily check that it is always the right value. 

By breaking the $z\to 1/z$ symmetry in the integral before taking limits we can obtain limits for different values of $\alpha$. This gives first of all (compare with \cite[Proposition 4.2]{vdBR}).
\begin{proposition}\label{proplimip3}
Let $t_r\in \mathbb{C}$ be generic, satisfying $\prod_r t_r=q$. Let $0\leq a,b,c\leq 5$ and let $\alpha \in \mathbb{R}^6$, and $-\frac12\leq \zeta<0$ satisfy $\sum_{r=0}^5 \alpha_r=1$, $\alpha_a+\alpha_b+\alpha_c =\zeta$ and
$\zeta\leq \alpha_r\leq -\zeta$ for $r=a,b,c$ and
$-\zeta\leq \alpha_r\leq 1+\zeta$ for $r\neq a,b,c$. Let
$f\in \tilde A(t_4p^{\alpha_4})$ and $g\in \tilde A(t_5p^{\alpha_5})$. 
Then we have

\begin{align*}
\lim_{p\to 0} & p^{-val(f)-val(g)}\langle f(z;t_r p^{\alpha_r}) ,g(z;t_rp^{\alpha_r}) \rangle_{t_r p^{\alpha_r}} \\ & = 
\frac{(q;q) \prod_{\substack{r,s\in \{a,b,c\}\\ \alpha_r+\alpha_s=-1}} (t_rt_s;q) 
\prod_{\substack{r \in \{a,b,c\}, s\not \in \{a,b,c\} \\ \alpha_r+\alpha_s=0}} (t_rt_s;q)
 }{ \prod_{\substack{r,s\in \{a,b,c\}\\ \alpha_r+\alpha_s=0}} (q t_r^{-1} t_s^{-1};q) 
\prod_{\substack{r,s\\ \alpha_r+\alpha_s=1}} (q t_r^{-1}t_s^{-1};q)}
\\& \qquad \times 
\int_{C} lc(f)(z) lc(g)(z) 
\frac{\prod_{\substack{r\in \{a,b,c\} \\ \alpha_r=- \zeta}} (q/t_rz;q) \prod_{\substack{r\not\in \{a,b,c\} \\ \alpha_r=1+\zeta}} (qz/t_r;q)}
{\prod_{\substack{r\in \{a,b,c\} \\  \alpha_r=\zeta}} (t_r/z;q) \prod_{\substack{r\not \in \{a,b,c\} \\ \alpha_r=-\zeta}} (t_rz;q)}
\left( \frac{ (z^2;q) \prod_{\substack{r\in \{a,b,c\} \\ \alpha_r=1/2}} (qz/t_r;q)}{(qz^2;q)\prod_{\substack{r\in \{a,b,c\} \\ \alpha_r=-1/2}} (t_rz;q)}  \right)^{1_{\zeta=-1/2}}
\\ & \qquad \qquad \times \theta(qz/t_at_bt_c;q)
\frac{dz}{2\pi i z},
\end{align*}
where $lc(f)=lc(f(p^{\zeta}z))$ and similarly for $g$. We also have the usual conditions on the integration contour (i.e. if $(t_rz;q)$ is in the denominator of the measure, we include poles of the form $q^k/\tilde t_r$, $k\geq 0$, and if 
$(t_r/z;q)$ is in the denominator we exclude poles of the form $\tilde t_r q^{-k}$, $k\geq 0$).
\end{proposition}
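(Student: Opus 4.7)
The strategy is to adapt the proof of Proposition \ref{propPI} (modeled on Proposition 4.2 of \cite{vdBR}) by making an asymmetric rescaling $z\mapsto p^\zeta z$ of the integration variable, breaking the $z\leftrightarrow 1/z$ symmetry of the measure so as to evaluate the biorthogonal functions at the $\zeta$ required by Theorem \ref{thmwhensystem}.

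First I would replace $f,g$ by their holomorphic counterparts $\hat f,\hat g$ (with modified parameters $\tilde t_4,\tilde t_5$), substitute $t_r\mapsto t_rp^{\alpha_r}$ in the integral, and rescale $z\mapsto p^\zeta z$. For small enough $p$, the rescaled contour can be homotoped to a fixed $p$-independent contour $C$ with the stated inclusion/exclusion pattern: the polytope conditions $\zeta\le\alpha_r\le -\zeta$ for $r\in\{a,b,c\}$ and $-\zeta\le\alpha_r\le 1+\zeta$ for $r\notin\{a,b,c\}$ ensure that the relative signs of $\alpha_r\pm\zeta$ place the asymptotic pole locations of $\Gamma(t_rp^{\alpha_r\pm\zeta}z^{\pm 1};p,q)$ on the correct side of $C$; the asymmetry $r\in\{a,b,c\}$ versus $r\notin\{a,b,c\}$ is precisely the asymmetry of the polytope face.

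With the contour fixed, I would run the standard term-by-term asymptotic analysis on both the integrand and the prefactor. Using $\Gamma(y;p,q)\to 1/(y;q)$, $\Gamma(py;p,q)\to(q/y;q)$ (with its counterpart via the reflection $\Gamma(x;p,q)\Gamma(pq/x;p,q)=1$), and the $q$-shift identity $\Gamma(px;p,q)=\theta(x;q)\Gamma(x;p,q)$, each factor whose $p$-exponent falls strictly inside the trivial interior $(0,1)$ contributes $1$, while those hitting a boundary value $\alpha_r\pm\zeta\in\{0,1\}$ or $\alpha_r+\alpha_s\in\{-1,0,1\}$ yield the $q$-Pochhammers listed in the formula. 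The same analysis applied to $\prod 1/\Gamma(t_rt_sp^{\alpha_r+\alpha_s};p,q)$ supplies the outer prefactor. The net $p$-valuation of the interior factors cancels $p^{-val(f)-val(g)}$ by Proposition \ref{propvaluationbiorthouniv}, with the defining relation $\alpha_a+\alpha_b+\alpha_c=\zeta$ playing a decisive role in this accounting.

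The main technical obstacle is extracting the explicit factor $\theta(qz/t_at_bt_c;q)$ in the limit. It arises from the $\Gamma$-factors whose $p$-exponents lie outside $(0,1)$, most notably $\Gamma(p^{2\zeta}z^2;p,q)^{-1}$ (with $2\zeta\in[-1,0)$) together with the product $\prod_{r\in\{a,b,c\}}\Gamma(t_rp^{\alpha_r+\zeta}z;p,q)$ (whose exponents lie in $[2\zeta,0]$); repeatedly applying $\Gamma(px;p,q)=\theta(x;q)\Gamma(x;p,q)$ to push each exponent into the trivial interior converts the combined non-trivial contribution into a product of $q$-theta factors, which, using $\alpha_a+\alpha_b+\alpha_c=\zeta$ together with the quasi-periodicity $\theta(qy;q)=-y^{-1}\theta(y;q)$, collapses to the single surviving factor $\theta(qz/t_at_bt_c;q)$ (the residual $z$-powers and $q$-Pochhammers being absorbed into the prefactor and the rest of the measure). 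The $\zeta=-1/2$ parenthetical correction is the further contribution from the degeneration of $\Gamma(p^{-1}z^2;p,q)^{-1}$ and of $\Gamma(t_rp^{\alpha_r\pm\zeta}z^{\pm 1};p,q)$ at $\alpha_r=\pm 1/2$. Interchange of limit and integration on $C$ follows from dominated convergence using the uniform convergence of elements of $M(x)$ on compacta outside their finite pole loci, essentially the argument spelled out in \cite{vdBRmeas}.
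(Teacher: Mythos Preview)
Your approach is essentially what the paper does: it states the proposition without a detailed proof, deferring to \cite[Proposition~4.2]{vdBR} (the $f=g=1$ case, obtained precisely by breaking the $z\leftrightarrow 1/z$ symmetry via the rescaling $z\mapsto p^\zeta z$) and to \cite{vdBRmeas} for the full argument with functions inserted, noting only that uniform convergence of $f,g\in M$ on compacta is what justifies the extension. One small slip: the cancellation of $p^{-val(f)-val(g)}$ has nothing to do with Proposition~\ref{propvaluationbiorthouniv} (which is specific to $\tilde R_n$); it follows simply because the measure is normalized so that $\langle 1,1\rangle=1$ has valuation zero, hence the measure-plus-prefactor contributes no net power of $p$, and inserting $f,g$ then contributes exactly $p^{val(f)+val(g)}$.
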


The next case we consider is an integral as in \cite[Proposition 4.4]{vdBR}. 
\begin{proposition}\label{proplimip2}
Let $t_r\in \mathbb{C}$ be generic such that $\prod_r t_r=q$. 
Let $0\leq a,b\leq 5$ and let $\alpha \in \mathbb{R}^6$, and $-\frac12\leq \zeta<0$ satisfy $\sum_{r=0}^5 \alpha_r=1$, $\alpha_a=\alpha_b =\zeta$ and $-\zeta\leq \alpha_r\leq 1+\zeta$ for $r\neq a,b$. Let $f\in \tilde A(t_4p^{\alpha_4})$ and $g\in \tilde A(t_5p^{\alpha_5})$

Then we have the limit (where $C$ satisfies the usual conditions on a contour)
\begin{align*}
&\lim_{p\to 0} p^{-val(f)-val(g)}  \langle f(z;t_r p^{\alpha_r})  ,g(z;t_rp^{\alpha_r}) \rangle_{t_r p^{\alpha_r}}   
\\ &= 
\frac{(q;q) (t_at_b;q)^{1_{\zeta=-1/2}} 
\prod_{r:\alpha_r=-\zeta} ( t_rt_a,t_rt_b;q) }{   \prod_{\substack{0\leq r<s\leq 5 \\ \alpha_r+\alpha_s=1}} (qt_r^{-1}t_s^{-1};q) } \\ &\qquad  \times 
\int_{C} lc(f)(z) lc(g)(z) \frac{\prod_{r:\alpha_r=1+\zeta} (qz/t_r;q)}{(t_a/z,t_b/z;q) \prod_{r: \alpha_r=-\zeta} (t_rz;q)}
\left( \frac{1-z^2}{(t_az,t_bz;q)}  \right)^{1_{\zeta=-1/2}} \frac{ \theta(wz, \frac{qz}{t_at_bw};q) }{\theta(t_aw, t_bw;q)}
\frac{dz}{2\pi i z}
\end{align*}
where $w$ is some new parameter (which does not affect the value of the bilinear form when applied to our functions). Here $lc(f)=lc(f(zp^{\zeta}))$ and likewise for $lc(g)$.
\end{proposition}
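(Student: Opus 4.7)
The plan is to reduce this to the case $f = g = 1$ treated in \cite[Prop.~4.4]{vdBR}, using the ``hatting'' device described at the start of the Measures section. Specifically, replacing $f, g$ by $\hat f, \hat g$ moves all poles of the integrand onto the factors $\Gamma(\tilde u_0 z^{\pm 1}; p, q) \Gamma(\tilde u_1 z^{\pm 1}; p, q)$ with the shifted parameters $\tilde u_i = q^{-m_{\cdot}} u_i$, at the cost of multiplying the integrand by the holomorphic, uniformly convergent (on compact sets in $p$) factors $\hat f, \hat g$. Thus the bilinear form becomes an integral of the same type as in \cite[Prop.~4.4]{vdBR} against the extra holomorphic factor $\hat f \cdot \hat g$, and the argument given there for the $\langle 1, 1 \rangle$ case applies essentially verbatim.

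The steps I would carry out are: first, write $\langle f, g \rangle$ as the explicit contour integral with integrand in terms of $\hat f, \hat g, \tilde t_r$, choosing a symmetric contour $C$ that separates the points $p^i q^j \tilde t_r$ from their reciprocals and, in the limit, picks up the poles at $\tilde t_a q^{-k}$ and $\tilde t_b q^{-k}$ after the rescaling $z \to z p^{\zeta}$. Second, analyze each elliptic gamma factor $\Gamma(t_r p^{\alpha_r} z^{\pm 1} p^{\pm \zeta}; p, q)$ using the standard $p \to 0$ asymptotics: those with exponent $\alpha_r \pm \zeta \in \{0, 1\}$ converge to q-Pochhammer factors (giving the $(t_r z;q)$, $(q z/t_r; q)$, $(t_a/z, t_b/z; q)$ etc.\ in the statement), while the remaining factors contribute only pure powers of $p, q, z$, which one matches with the prefactor. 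Third, combine with the analogous limits for $(p; p)_\infty$ and the $\Gamma(t_r t_s; p, q)$ normalization factors.

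The main obstacle will be the pair of factors $\Gamma(t_a z p^{2\zeta}; p, q) \Gamma(t_b z p^{2\zeta}; p, q)$ (and the analogous $z \to 1/z$ pair), whose individual $p \to 0$ asymptotics involve fractional parts of $2\zeta$ that misbehave at $\zeta = -1/2$ and are not rational functions of $z$. As in \cite[Prop.~4.4]{vdBR}, the resolution is to insert the quasi-constant theta-function regulator $\theta(wz, q z/(t_a t_b w); q)/\theta(t_a w, t_b w; q)$, chosen so that, multiplied into the integrand, the combination of leading coefficients has a well-defined $z$-meromorphic limit, while the $\theta$ factor itself lies (as a function of $z$) in the $p$-abelian quotient that integrates trivially against $\tilde A(t_4 p^{\alpha_4}) \otimes \tilde A(t_5 p^{\alpha_5})$; this is exactly what ensures the final integral is independent of $w$.

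Once the integrand has been regulated this way, the last step is to invoke uniform convergence: $\hat f$ and $\hat g$ converge uniformly on compacta in $z$ outside isolated poles, and the regulated rational tail converges uniformly on compacta outside its own (isolated, $p$-independent) zero/pole set. Choosing a representative contour that avoids all these limiting singular sets (possible since they are discrete and, by the hypotheses on $\alpha$ and genericity of $t_r$, can be separated from one another), dominated convergence yields the stated identity and the valuation equality $\mathrm{val}(\langle f, g \rangle) = \mathrm{val}(f(z p^\zeta)) + \mathrm{val}(g(z p^\zeta))$.
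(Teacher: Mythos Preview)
Your proposal is correct and follows essentially the same approach as the paper. The paper does not give an explicit proof here; it states at the start of the Measures section that these propositions are slight generalizations of the corresponding results in \cite{vdBR} (this one being \cite[Prop.~4.4]{vdBR}), that inserting $f$ and $g$ does not significantly alter the argument because of the hatting device and uniform convergence on compacta, and defers detailed proofs to \cite{vdBRmeas}---which is exactly the line you outline, in somewhat more detail than the paper itself provides.
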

There also exists a double series expression for this bilinear form. The double series expression is
the extensions to a measure of some of the cases in  \cite[Proposition 4.3]{vdBR}. It can be obtained from the integral expression by taking residues while shrinking the contour to the origin. In the case $\{a,b\} \cap \{4,5\} \neq \emptyset$ we get extra complications, as we would have to consider residues of $lc(f)$ and $lc(g)$. Thus  our proposition statement will exclude those cases, though after the proposition we will explain how to get a more general expression.
\begin{proposition}\label{proplimip2b}
Let $t_r\in \mathbb{C}$ be generic such that $\prod_r t_r=q$. 
Let $0\leq a,b\leq 3$ and let $\alpha \in \mathbb{R}^6$, and $-\frac12\leq \zeta<0$ satisfy $\sum_{r=0}^5 \alpha_r=1$, $\alpha_a=\alpha_b =\zeta$ and $-\zeta\leq \alpha_r\leq 1+\zeta$ for $r\neq a,b$ ($0\leq r\leq 5$). Let $f\in \tilde A(t_4p^{\alpha_4})$ and $g\in \tilde A(t_5p^{\alpha_5})$

Then we have the limit 
\begin{align*}
&\lim_{p\to 0} p^{-val(f)-val(g)}  \langle f(z;t_r p^{\alpha_r})  ,g(z;t_rp^{\alpha_r}) \rangle_{t_r p^{\alpha_r}}   
\\ &= \left( \frac{1}{(qt_a^2;q)} \right)^{1_{\zeta=-\frac12}} 
 \frac{\prod_{r: \alpha_r=-\zeta} (t_rt_b;q)\prod_{r:\alpha_r=1+\zeta} (\frac{q t_a}{t_r};q) }
{ (\frac{ t_b}{ t_a};q)\prod_{r,s:\alpha_r+\alpha_s=1}(\frac{q}{t_rt_s};q)}
\\ & \qquad  \times \sum_{k \geq 0}
lc(f)(t_a q^k) lc(g)(t_a q^k) 
\left( \frac{(q t_a^2;q)_{2k}(t_a^2 , t_a t_b;q)_{k}}{( t_a^2;q)_{2k}} \right)^{1_{\zeta=-\frac12}}
\frac{\prod_{\alpha_r=-\zeta}(t_r  t_a ;q)_{k}}{(q,\frac{q  t_a}{t_b};q)_{k}  \prod_{r:\alpha_r=1+\zeta} (\frac{q t_a}{t_r};q)_{k} }  q^{k}
\\ & + 
\left( \frac{}{(qt_b^2;q)} \right)^{1_{\zeta=-\frac12}} 
 \frac{\prod_{r: \alpha_r=-\zeta} (t_rt_a;q)\prod_{r:\alpha_r=1+\zeta} (\frac{q t_b}{t_r};q)}
{ (\frac{t_a}{ t_b};q)\prod_{r,s:\alpha_r+\alpha_s=1}(\frac{q}{t_rt_s};q)}
\\ & \qquad  \times \sum_{k \geq 0}
lc(f)(t_b q^k) lc(g)(t_b q^k) 
\left( \frac{(q t_b^2;q)_{2k}(t_b^2, t_b t_a;q)_{k}}{( t_b^2;q)_{2k}} \right)^{1_{\zeta=-\frac12}}
\frac{\prod_{\alpha_r=-\zeta}(t_r  t_b ;q)_{k}}{(q,\frac{q  t_b}{t_a};q)_{k}  \prod_{r:\alpha_r=1+\zeta} (\frac{q t_b}{t_r};q)_{k} }  q^{k}
\end{align*}
where $lc(f)=lc(f(zp^{\zeta}))$ and likewise for $lc(g)$.
\end{proposition}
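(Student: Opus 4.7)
The plan is to derive the double series expression from the integral representation in Proposition \ref{proplimip2} by shrinking the contour $C$ toward the origin. The hypotheses on $\alpha$ combined with the contour conventions ensure that the only poles swept across during this deformation are the two geometric sequences $z = t_a q^k$ and $z = t_b q^k$ for $k \geq 0$, both arising from the factors $(t_a/z, t_b/z; q)$ in the denominator of the integrand. The restriction $a, b \in \{0, 1, 2, 3\}$ is essential: it ensures that $lc(f) \in \tilde{A}(t_4 p^{\alpha_4})$ and $lc(g) \in \tilde{A}(t_5 p^{\alpha_5})$, which can have poles only at translates of $t_4$ and $t_5$ respectively, are holomorphic at all of these residue points and can simply be evaluated there. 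The two sums in the statement correspond to these two sequences of residues, related by the $a \leftrightarrow b$ symmetry of the integrand, so it suffices to compute one.

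For the residue at $z = t_a q^k$, I would use the standard identity
\[
\mathrm{Res}_{z=t_a q^k} \frac{1}{(t_a/z; q)} \cdot \frac{dz}{z} = \frac{(-1)^k q^{\binom{k+1}{2}}}{(q;q)(q;q)_k}
\]
together with the evaluations $(t_r z; q)|_{z = t_a q^k} = (t_r t_a; q)/(t_r t_a; q)_k$, $(qz/t_r; q)|_{z = t_a q^k} = (q t_a/t_r; q)/(q t_a/t_r; q)_k$, and $(t_b/z;q)|_{z = t_a q^k} = (-t_b/t_a)^k q^{-\binom{k+1}{2}}(q t_a/t_b; q)_k (t_b/t_a; q)$. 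The resulting infinite-product pieces combine with the overall prefactor of Proposition \ref{proplimip2} to give the prefactor $\prod_{r: \alpha_r = -\zeta}(t_r t_b; q)\prod_{r: \alpha_r=1+\zeta}(q t_a/t_r; q)/\bigl((t_b/t_a;q)\prod_{r,s:\alpha_r+\alpha_s=1}(q/(t_r t_s); q)\bigr)$ of the first sum, while the $k$-dependent pieces give the ratio of $q$-Pochhammer symbols in the summand. For $\zeta = -1/2$ the additional factor $(1 - z^2)/(t_a z, t_b z; q)$ evaluated at $z = t_a q^k$ contributes $(1 - t_a^2 q^{2k})$, $(t_a^2; q)_k$, $(t_a t_b; q)_k$, and the identity $(1 - t_a^2 q^{2k})(t_a^2; q)_k = (1 - t_a^2)(qt_a^2;q)_{2k}(t_a^2; q)_k / (t_a^2; q)_{2k}$ reassembles these into exactly the very-well-poised piece $(q t_a^2; q)_{2k}(t_a^2, t_a t_b; q)_k/(t_a^2; q)_{2k}$ advertised in the statement.

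The one genuinely delicate point is the auxiliary theta factor $\theta(wz, qz/(t_a t_b w); q)/\theta(t_a w, t_b w; q)$. Using the quasi-periodicity $\theta(x q^k; q) = (-x)^{-k} q^{-\binom{k}{2}} \theta(x; q)$ together with $\theta(q/x; q) = \theta(x; q)$, one finds that its value at $z = t_a q^k$ simplifies to $(t_b/t_a)^k q^{-k^2}$; this exactly cancels the $(t_a/t_b)^k q^{k(k+1)}$ accumulated from the other $k$-dependent pieces of the residue, leaving only the $q^k$ that appears in the summand (and in particular, the $w$-dependence drops out in each sum individually). The main obstacle is therefore purely one of bookkeeping: carefully tracking all the signs, powers of $q$, and infinite products through the residue computation and verifying they combine into the stated form, especially in the $\zeta = -1/2$ case where the very-well-poised factor must be assembled from several competing terms. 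The justification for interchanging $\lim_{p \to 0}$ with the infinite sum is then routine, following from the uniform convergence of $lc(f), lc(g)$ on compacta outside their finite pole set (provided by the definition of $M(x)$) together with the geometric decay of the $q$-Pochhammer summand in $k$.
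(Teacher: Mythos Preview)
Your proposal is correct and follows exactly the approach the paper indicates: the paragraph immediately preceding the proposition states that the double series ``can be obtained from the integral expression by taking residues while shrinking the contour to the origin,'' and gives no further proof. Your residue computation (including the verification that the auxiliary $w$-dependent theta factor contributes only a $k$-dependent monomial which cancels against the remaining powers, and your explanation of why the restriction $a,b\in\{0,1,2,3\}$ keeps $lc(f),lc(g)$ holomorphic at the residue points) is precisely the bookkeeping the paper leaves implicit. One small remark: since you start from Proposition~\ref{proplimip2}, which is already the $p\to 0$ limit, there is no interchange of $\lim_{p\to 0}$ with the residue sum to justify; the only analytic point is that the contour integral over a small circle tends to zero and that the infinite sum of residues converges, both of which follow from the geometric decay you mention.
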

To extend the result to the case $a=4$ we should consider the series based around $t_aq^k$ as a bilateral series which happens to have a term $(q;q)_k$ in the denominator which makes it zero for $k<0$. However, if $a=4$ we would be evaluating $lc(f)$ at one of its poles (which are located at $t_aq^k$ for $-m_f\leq k<0$). As long as $lc(f)$ indeed has a pole, it would cancel the zero from the $(q;q)_k$-term, and we would get an extra term. In particular we get at most $m_f$ extra terms. To make this more formal one could rewrite the summand in terms of $lc(\hat f)$ instead of $lc(f)$, so that we can evaluate $lc(\hat f)$ in the relevant points. It should be noted that in this case the support of the measure becomes $u_0 q^{\mathbb{Z}}$, not just $u_0q^{\mathbb{Z}_{\geq 0}}$, plus the support coming from the second series.

The final limit case for the continuous measure corresponds to single sum expressions. Many of the issues surrounding the double sum expression arise arise here as well. Compare this proposition to the single series cases of \cite[Proposition 4.3]{vdBR}.
\begin{proposition}\label{propsumlim}
Let $t_r\in \mathbb{C}$ be generic such that $\prod_r t_r=q$.
Let $\sum_{r=0}^5 \alpha_r=1$. Let $0\leq a\leq 3$ be such that $-\frac12\leq \alpha_a<0$ and $1+\alpha_a\geq \alpha_r > \alpha_a$ for $r\neq a$, $0\leq r\leq 5$ and such that 
$1\geq \alpha_r+\alpha_s> 0$ for $r,s\neq a$. 
Moreover assume $2\alpha_a = \sum_{r\neq a: \alpha_r+\alpha_a <0} (\alpha_r+\alpha_a)$.
Let $f\in \tilde  A(t_4p^{\alpha_4})$
and likewise $g\in \tilde A(t_5p^{\alpha_5})$. 
%

We have
\begin{align*}
\lim_{p\to 0} & p^{-val(f)-val(g)} \langle f(z;t_rp^{\alpha_r}),g(z;t_rp^{\alpha_r}) \rangle_{t_r p^{\alpha_r}} \\ & 
=
\frac{\prod_{r,s: \alpha_r+\alpha_s=0} (t_rt_s;q)
\prod_{r: \alpha_r=\alpha_a+1} (q t_a/ t_r;q)}{\prod_{r,s:\alpha_r+\alpha_s=1} (q/t_rt_s;q)
\prod_{r:\alpha_r=-\alpha_a} ( t_r  t_a;q)}
\left( \frac{1}{(qt_a^2;q)} \right)^{1_{\alpha_a=-\frac12}} 
\\ & \qquad \times \sum_{k\geq 0}  lc(f)(t_a q^{k}) lc( g)(t_a q^{k})
\left( \frac{1- t_a^2 q^{2k}}{1- t_a^2}  (t_a^2;q)_{k}   \right)^{1_{\alpha_a=-\frac12}}
\\ & \qquad \qquad \times 
 \frac{\prod_{r:\alpha_r=-\alpha_a} ( t_r  t_a;q)_{k}}{(q;q)_{k}\prod_{r:\alpha_r=\alpha_a+1}(q t_a/t_r;q)_{k}}
\left( (-1)^{k} q^{ \binom{k}{2}} \right)^{N-2}
\left(  t_a^{N-2}  \prod_{r\neq a: \alpha_r+\alpha_a<0}  t_r  \right)^{k} 
\end{align*}
where $N=|\{ r~|~ r\neq a,  \alpha_r<-\alpha_a\}|$, 
and $lc(\hat f)=lc(\hat f(zp^{\alpha_a}))$, and likewise for $lc(\hat g)$ and the valuations.
\end{proposition}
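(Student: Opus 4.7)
The plan is to extend the argument behind \cite[Proposition 4.3]{vdBR}, which treated the case $\langle 1,1\rangle$, to general test functions $f\in \tilde A(t_4p^{\alpha_4})$ and $g\in \tilde A(t_5p^{\alpha_5})$. I start from the integral representation of the bilinear form. Using the hat-function trick described in the preamble to this section, namely replacing $(f,g)$ by the holomorphic pair $(\hat f,\hat g)$ at the cost of shifting $u_0\to \tilde u_0=q^{-m_f}u_0$ and $u_1\to \tilde u_1=q^{-m_g}u_1$ in the elliptic gamma factors, I obtain an integral whose integrand is the product of the uniformly (in $p$) convergent function $\hat f\hat g$ with a measure of exactly the type analyzed in \cite{vdBR}.

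First I would fix an appropriate contour $C$ enclosing the geometric sequence $\tilde t_aq^k$ for $k\geq 0$ and excluding its reciprocal. The inequalities $\alpha_a<0$ and $\alpha_r>\alpha_a$ for $r\neq a$ single out $z=\tilde t_ap^{\alpha_a}q^k$ as the dominant residue sequence inside $C$, while $1\geq \alpha_r+\alpha_s>0$ for $r,s\neq a$ eliminates all other candidate series from contributing in the limit. The crucial balancing hypothesis $2\alpha_a=\sum_{r\neq a:\,\alpha_r+\alpha_a<0}(\alpha_r+\alpha_a)$ is precisely the condition that forces a single, rather than a double, series of residues to survive: it equates the combined $p$-order of the residue prefactors with that of the successive geometric ratios along the sequence.

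Second I would rescale by $p^{-val(f)-val(g)}$ and read off the limits of the individual $\theta$- and $\Gamma$-factors using the leading-coefficient formulas recorded earlier for $\theta(xp^\alpha;p)$ (and the analogous formulas for $\Gamma(xp^\alpha;p,q)$ coming from its product expansion). Careful bookkeeping of the resulting $q$-Pochhammer products gives the overall prefactor together with the geometric factor $((-1)^kq^{\binom{k}{2}})^{N-2}$ and the power $(t_a^{N-2}\prod_{r\neq a:\,\alpha_r+\alpha_a<0}t_r)^k$; the exponent $N-2$ counts the net number of numerator $\Gamma$-factors in the integrand whose arguments degenerate along $z=\tilde t_aq^k$, after subtracting the two $\Gamma(z^{\pm 2};p,q)^{-1}$ contributions. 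Uniform convergence of $\hat f\hat g$ on the discrete support $\{t_aq^k\}$ then justifies interchanging limit and summation, and translating back from $\hat f,\hat g$ to $lc(f),lc(g)$ yields the stated formula.

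The main obstacle I anticipate is the bookkeeping at the boundary $\alpha_a=-1/2$, where the two families $\{\tilde t_aq^k\}$ and $\{\tilde t_a^{-1}q^{-k}\}$ merge under the $z\leftrightarrow 1/z$ symmetrization of the contour, producing the extra factor $(1-t_a^2q^{2k})/(1-t_a^2)$ from the doubled residue together with the additional $(qt_a^2;q)^{-1}$ prefactor and the $(t_a^2;q)_k$ inside the sum. A corresponding case analysis must confirm that under the single-series condition on $\alpha_a$ exactly one residue sequence survives when $\alpha_a>-\tfrac12$ and exactly the doubled one survives when $\alpha_a=-\tfrac12$; matching the two resulting formulas cleanly via the $1_{\alpha_a=-\frac12}$ indicators is the most delicate part of the verification.
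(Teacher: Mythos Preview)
Your proposal is correct and matches the paper's approach: the paper does not give a detailed proof here but indicates (at the start of the section) that the result is a slight generalization of \cite[Proposition~4.3]{vdBR} obtained via the $\hat f,\hat g$ substitution and uniform convergence, with full details deferred to \cite{vdBRmeas}. One small correction of interpretation: the strict inequalities $\alpha_r>\alpha_a$ for $r\neq a$ already guarantee a single dominant residue sequence; the extra hypothesis $2\alpha_a=\sum_{r\neq a:\,\alpha_r+\alpha_a<0}(\alpha_r+\alpha_a)$ is rather the valuation-matching condition ensuring that the residue series itself has the correct $p$-order (so that the remaining contour integral becomes subleading and the limit is finite and nonzero), not the condition distinguishing single from double series.
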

%

Finally let us consider finite measures. In this limit we want to insist that $t_0t_1=q^{-N}$ for all $p$, so $\alpha_0+\alpha_1=0$. The proof in this case is just interchanging sum and limit. Note that the resulting expression looks very much like the sum cases above.
\begin{proposition}
Suppose $t_r\in \mathbb{C}$ are generic, such that $t_0t_1=q^{-N}$ and $t_2t_3t_4t_5=q^{N+1}$. Suppose $-\frac12\leq \alpha_0=-\alpha_1\leq 0$, and $1+\alpha_0\geq \alpha_r\geq \alpha_0$ for $r>1$, and that $\alpha_r+\alpha_s\leq 1$ (for $2\leq r<s\leq 5$) and 
$\alpha_2+\alpha_3+\alpha_4+\alpha_5=1$ and $\sum_{2\leq r\leq 5:\alpha_r<-\alpha_0} \alpha_0+\alpha_r=2\alpha_0$. 
Let $f\in \tilde  A(t_4p^{\alpha_4})$
and likewise $g\in \tilde A(t_5p^{\alpha_5})$.
Then $\langle f,g \rangle_{t,u} \in M(\tilde t, \tilde u,q,t)$ and
\[
\lim_{p\to 0} p^{-val(f(zp^{\alpha_0}))-val(g(zp^{\alpha_0}))}\langle f,g \rangle_{t} 
=
\sum_{k=0}^N
lc(f)(t_0 q^{k}) lc(g)(t_0 q^{k}) w_{k,\alpha}(t_r),
\]
with $lc(f)=lc(f(zp^{\alpha_0}))$ and $lc(g)=lc(g(zp^{\alpha_0}))$.
Here the weights $w_{k,\alpha}$ are given by
\begin{itemize}
\item If $\alpha_0=0$ 
\begin{align*}
w_{k,\alpha} 
 & = 
 \frac{1-t_0^2 q^{2k}}{1-t_0^2} \frac{(q^{-N},t_0^2;q)_k}{(q,qt_0/t_1;q)_k} \left(\frac{1}{t_1t_0^3 q}\right)^k q^{-2\binom{k}{2}}
 \frac{ 1 }{(t_1/t_0;q)_N }
\prod_{2\leq r\leq 5:\alpha_r=0}
\frac{(t_0t_r;q)_k (t_1t_r;q)_N}{(qt_0/t_r;q)_k}
(-qt_0/t_r)^{k} q^{\binom{k}{2}} 
\\ & \qquad\times 
\prod_{2\leq r\leq 5:\alpha_r=1}
\frac{(t_0t_r;q)_k (t_1t_r;q)_N}{(qt_0/t_r;q)_k}
(-t_0t_r)^{-k} (-t_1t_r)^{-N} q^{-\binom{k}{2}-\binom{N}{2}}
\prod_{2\leq r<s\leq 5: \alpha_r+\alpha_s=1} \frac{1}{(q/t_rt_s;q)_N} 
\end{align*}

\item If $-1/2<\alpha_0<0$ 
\begin{align*}
w_{k,\alpha} 
&=  \frac{(q^{-N};q)_k  }{(q;q)_k  t_0^{2k} q^{2\binom{k}{2}}}
 \prod_{2\leq r\leq 5:\alpha_r=\alpha_0} \frac{ (qt_0^2)^{k} q^{2\binom{k}{2}} }{ (qt_0/t_r;q)_k}(t_1t_r;q)_N
\prod_{2\leq r\leq 5:\alpha_0<\alpha_r<-\alpha_0}   (-t_0t_r)^{k} q^{\binom{k}{2}} 
\\ & \qquad \times 
\prod_{2\leq r\leq 5:\alpha_r=-\alpha_0} (t_0t_r;q)_k
\prod_{2\leq r\leq 5:\alpha_r=1+\alpha_0} \frac{(qt_0/t_r;q)_N}{(qt_0/t_r;q)_k}
\prod_{2\leq r<s\leq 5: \alpha_r+\alpha_s=1} \frac{1}{(q/t_rt_s;q)_N} 
\end{align*}

\item If $\alpha_0=-1/2$ 
\begin{align*}
w_{k,\alpha} 
& = 
\frac{1-t_0^2 q^{2k}}{1-t_0^2} \frac{(q^{-N},t_0^2;q)_k}{(q,qt_0/t_1;q)_k} 
\frac{1}{(t_1/t_0;q)_N}
\left(\frac{qt_0}{t_1}\right)^k  \left(-\frac{t_1}{t_0}\right)^{N} q^{2\binom{k}{2}+\binom{N}{2}} 
\\ & \qquad \times 
\prod_{2\leq r\leq 5: \alpha_r=-1/2} \frac{(t_0t_r;q)_k (t_1t_r;q)_N (-qt_0/t_r)^{k} q^{\binom{k}{2}} } {(qt_0/t_r;q)_k  }
\\ & \qquad \times 
\prod_{2\leq r\leq 5: \alpha_r=1/2} \frac{(t_0t_r;q)_k (t_1t_r;q)_N}{(qt_0/t_r;q)_k  } (-t_0t_r)^{-k}(-t_1t_r)^{-N} q^{-\binom{k}{2}-\binom{N}{2} } 
\prod_{\substack{2\leq r<s \leq 5 \\ \alpha_r+\alpha_s=1}} \frac{1}{(q/t_rt_s;q)_N} 
\end{align*}
\end{itemize}
\end{proposition}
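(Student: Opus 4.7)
The plan is to reduce the statement to a term-by-term computation. Since the discrete inner product in the theorem is a \emph{finite} sum with $N+1$ terms, interchanging the limit $p\to 0$ with the summation is automatic, provided we can compute the valuation and leading coefficient of each summand. So the task splits cleanly into three pieces: (a) verify that $\langle f,g\rangle$ lies in $M(\tilde t,\tilde u,q,t)$; (b) compute $lc$ and $val$ of $f(t_0q^k)\,g(t_0q^k)$ when $z\mapsto zp^{\alpha_0}$ and $t_r\mapsto t_rp^{\alpha_r}$; (c) compute $lc$ and $val$ of the explicit theta-function weight $w_k$ in the discrete formula from the theorem.

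For (a), each theta factor appearing in the weight, after substitution $t_r\to t_rp^{\alpha_r}$, has the form $\theta(x\,p^{\chi};p)$ for suitable monomials $x$ in the $t_r$ and rational exponents $\chi$, and each $(q;p)_k$ factor is a $p$\nobreakdash-power times $\theta$\nobreakdash-products of the same form; hence every weight is in $M$ by the discussion following Definition~\ref{defM}. The functions $f$ and $g$ are elements of the appropriate $\tilde A$\nobreakdash-spaces, hence lie in $M$ as well, so the whole finite sum belongs to $M$. For (b), plugging $z=t_0q^k$ into $f(zp^{\alpha_0})$ yields $p^{val(f(zp^{\alpha_0}))}\bigl(lc(f)(t_0q^k)+O(p^\delta)\bigr)$ by Proposition~\ref{propiteratedlim}, and similarly for $g$; genericity of the parameters ensures $lc(f),lc(g)$ are regular at the sample points $t_0q^k$.

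For (c), which is the heart of the matter, I would apply the explicit formulas
\[
val(\theta(xp^{\chi};p))=\tfrac12\{\chi\}(\{\chi\}-1)-\tfrac12\chi(\chi-1),\qquad
lc(\theta(xp^{\chi};p))=\begin{cases}(1-x)(-x)^{-\chi}&\chi\in\mathbb{Z},\\ (-x)^{-\lfloor\chi\rfloor}&\chi\notin\mathbb{Z},\end{cases}
\]
from Section~2, factor by factor, to each of the $\theta(\cdot;q;p)_k$ and $\theta(\cdot;q;p)_N$ blocks in the weight. The pattern is that theta factors whose exponent $\chi$ is an integer reassemble into ordinary $q$\nobreakdash-Pochhammer symbols $(x;q)_k$ or $(x;q)_N$, while those whose exponent is not an integer contribute only the powers $(-x)^{-\lfloor\chi\rfloor}$, which bundle into the prefactors of the form $(-qt_a/t_r)^k q^{\binom{k}{2}}$ and $(-t_0t_r)^{-k}q^{-\binom{k}{2}}$ appearing in the statement. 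Doing this case by case on the sign of $\chi=j\cdot 1+\alpha_r\pm\alpha_s$ for each $r,s,j$ gives exactly the three sets of weights $w_{k,\alpha}$ listed, according to whether $\alpha_0=0$, $-1/2<\alpha_0<0$, or $\alpha_0=-1/2$ (which is where the self-dual $\theta(qt_0^2;q;p)_{2k}/\theta(t_0^2;q;p)_{2k}$ factor contributes nontrivially because its exponent $2\alpha_0$ becomes an integer, so the indicator $1_{\alpha_0=-1/2}$ appears).

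The only honest obstacle is bookkeeping. The hypothesis $\sum_{r\geq 2:\alpha_r<-\alpha_0}(\alpha_r+\alpha_0)=2\alpha_0$ must be used to verify that the valuation of the summand is the same for every $k$, so that no cancellation or dominance among summands occurs and the identity \eqref{eqbilval} holds; this is analogous to the balancing argument that produces the single-sum case in Proposition~\ref{propsumlim}, and without it some $k$\nobreakdash-terms would be subdominant and drop out. The remaining work is a mechanical but lengthy case analysis of which factors have $\chi\in\mathbb{Z}$ and which do not, which I expect to reduce, after collecting the $(-x)^{-\lfloor\chi\rfloor}q^{\cdots}$ contributions, to the three explicit expressions stated.
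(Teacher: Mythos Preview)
Your proposal is correct and matches the paper's approach exactly: the paper's entire proof is the single sentence ``The proof in this case is just interchanging sum and limit,'' and you have simply filled in the details of that interchange---the membership in $M$, the termwise evaluation of $f,g$ at $t_0q^k$, and the factor-by-factor computation of the weight via the $\theta(xp^\chi;p)$ formulas. Your observation that the balancing hypothesis $\sum_{r\geq 2:\alpha_r<-\alpha_0}(\alpha_0+\alpha_r)=2\alpha_0$ is what makes the valuation of each summand independent of $k$ (so that all $N+1$ terms survive in the limit rather than some being subdominant) is the one substantive point the paper leaves implicit, and you have identified it correctly.
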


\section{The $q$-Askey scheme}\label{secAWscheme}
In this section we want to highlight the part of our degeneration scheme of biorthogonal families which corresponds to the $q$-Askey scheme. The $q$-Askey scheme is a unified way of presenting the classical $q$-hypergeometric orthogonal families of polynomials, and put them in a picture which clarifies the possible limit transitions. Orthogonal polynomials are just the special case of biorthogonal rational functions, where the poles of the rational functions have disappeared to infinity and the a priori two different families on both sides of the bilinear form have reduced to identical functions. Therefore it comes as no surprise that the different families in the $q$-Askey scheme return as special cases in our degeneration scheme of biorthogonal families of rational $q$-hypergeometric functions. 

We would like to emphasize however that we have a slightly different philosophy on what we include in our scheme and what we consider to be different. In particular we do not care about positivity of either the squared norms, or the measure, whereas the $q$-Askey scheme does. It only makes sense to worry about this in the context of orthogonality (as opposed to biorthogonality), but in the case of orthogonality it is a very important property. Because of this difference we include families which have no positive measure at all, while the $q$-Askey scheme would consider two families of orthogonal polynomials to be different if they are the same functions, but in different ranges of the parameters, for which the conditions of positivity are different. This always happens if different measures work for   $|q|<1$ and  $|q|>1$ (e.g. compare Al-Salam Carlitz I and II), which in our scheme means that a degeneration is not equal to the degeneration of the flipped version. However it also happens for just simply different parameters while keeping $|q|<1$, for example, consider the relation $(q;q)_nL_n^{(\alpha)}(-q^{-x};q) = C_n(q^{-x};-q^{-\alpha};q)$ between $q$-Laguerre and $q$-Charlier polynomials. 

A special case of this is when we can specialize the product of two parameters to $q^{-N}$ which makes the measure finitely supported, for example compare Askey-Wilson polynomials and $q$-Racah polynomials. These cases will be the same point in our degeneration scheme. However it is relatively easy to determine if we can specialize parameters in this way. This is only possible if we have two visible parameters $\alpha_r$ which sum up to 0 (because their product has to be $q^{-N}$ for all values of $p$).

The degeneration scheme in Figure \ref{figawscheme} shows the different limits we obtain of families of orthogonal polynomials. We modded out by the permutation symmetries in the parameters on the elliptic level, and the translations along $p$-shifts. However we did not mod out by the flip (sending $q\to 1/q$), unlike in the full scheme in Figure \ref{figfulldeg}. We also changed the names from that scheme. The iterated limit property shows that all families are limits of the top level, the Askey-Wilson polynomials. For each degeneration there is a unique associated face which contains $(\alpha;\gamma;\zeta)=v:=(0000;\frac12\frac12;0)$ at its boundary. This vector corresponds to the Askey-Wilson polynomials themselves. In the midpoint of the face denoted by $[a/b]$ we have
$\gamma_0=\gamma_1=a/b$. Moreover the degeneration $[a/b]$ is a $(b-4)/2$ level degeneration down from the Askey-Wilson level. The level of degeneration and the value of $\gamma_0=\gamma_1$ almost completely determine which degeneration we are in, except that there are two limits corresponding to $[5/10]$, so we called one of them $[5/10]'$. 

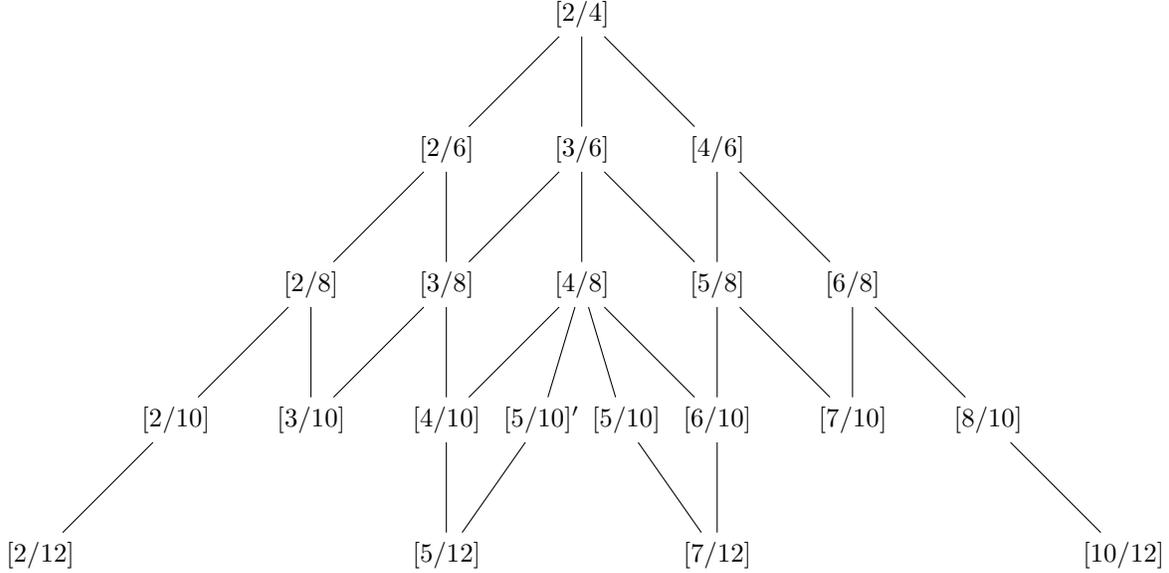
\begin{figure}

\begin{tikzpicture}[scale=0.9]

\node  (24) at (8 ,8)  {$[2/4]$}  ;

\node (26) at (6,6) {$[2/6]$} ;
\node (36) at (8,6) {$[3/6]$} ;
\node (46) at (10,6) {$[4/6]$} ;

\node (28) at (4,4) {$[2/8]$} ;
\node (38) at (6,4) {$[3/8]$} ;
\node (48) at (8,4) {$[4/8]$} ;
\node (58) at (10,4) {$[5/8]$} ;
\node (68) at (12,4) {$[6/8]$} ;

\node (2x) at (2,2) {$[2/10]$} ;
\node (3x) at (4,2) {$[3/10]$} ;
\node (4x) at (6,2) {$[4/10]$} ;
\node (5x) at (7.4,2) {$[5/10]'$} ;  
\node (5xf) at (8.6,2) {$\phantom{'}[5/10]$} ; 
\node (6x) at (10,2) {$[6/10]$} ;
\node (7x) at (12,2) {$[7/10]$} ;
\node (8x) at (14,2) {$[8/10]$} ;

\node (2xii) at (0,0) {$[2/12]$} ;
\node (5xii) at (6,0) {$[5/12]$} ;
\node (7xii) at (10,0) {$[7/12]$} ;
\node (10xii) at (16,0) {$[10/12]$} ;

\draw (10xii) -- (8x) -- (68) -- (46)-- (24)-- (26) -- (28) -- (2x) -- (2xii);
\draw (26) -- (38) -- (36) -- (58) -- (46) ;
\draw (24) -- (36) -- (48) -- (5x);
\draw (28) -- (3x) -- (38) -- (4x) -- (48) -- (6x) -- (58) -- (7x) -- (68) ;
\draw (48) -- (5xf);

\draw (4x) -- (5xii) -- (5x) ;
\draw (5xf) -- (7xii) -- (6x) ;

\end{tikzpicture}

\caption{The $q$-Askey scheme as part of the degeneration scheme for biorthogonal $q$-hypergeometric rational functions.}\label{figawscheme}

\end{figure}

We can also put the different degenerations in a list, giving their identification with elements of the usual $q$-Askey scheme. The table lists the names we gave the degeneration (both in our scheme of orthogonal polynomials, and the complete scheme of biorthogonal functions), the midpoint of the face giving this limit which contains $v$,
and the classical polynomials it corresponds to (both for measures with infinite support and, if it exists, with finite support). These families can all be found in \cite{KS}, except for the ones on the right most line (below the top level), and the ones for which no positive measure is possible (denoted by $NP$  in the scheme below). 
The case $[6/8]$, i.e. Al-Salam Chihara polynomials with $|q|>1$, was studied by Askey and Ismail \cite{AI}. 

\begin{center}
\begin{tabular}{lllll}
Fig. \ref{figawscheme} &  Midpoint & Fig. \ref{figfulldeg} & $q$-Askey &  Discrete  \\
$[2/4]$ & $(0,0,0,0;\frac12,\frac12;0)$ & $40as$ & Askey-Wilson &  $q$-Racah \\
\hline
$[2/6]$ & $(0,0,0,\frac13;\frac13,\frac13;0)$ & $31as$ & Continuous dual $q$-Hahn & Dual $q$-Hahn \\
$[3/6]$ & $(-\frac16,-\frac16,\frac16,\frac16;\frac12,\frac12;\frac16)$ & $2200as$ & Big $q$-Jacobi & $q$-Hahn \\
$[4/6]$ & $(-\frac13,0,0,0;\frac23,\frac23;0)$ & $31as$ & ? & ? \\
\hline
$[2/8]$ & $(0,0,\frac14,\frac14;\frac14,\frac14;0)$ & $22as$ & Al-Salam Chihara & Dual $q$-Krawtchouk \\
$[3/8]$ & $(-\frac18,-\frac18,\frac18,\frac38;\frac38,\frac38;\frac18)$ & $2110as$ & Big $q$-Laguerre & affine $q$-Krawtchouk \\
$[4/8]$ & $(-\frac14,0,0,\frac14;\frac12,\frac12;\frac14)$ & $1120as$ & Little $q$-Jacobi & $q$-Krawtchouk \\
$[5/8]$ & $(-\frac38,-\frac18,\frac18,\frac18;\frac58,\frac58;\frac18)$ & $2110as$ & $q$-Meixner & quantum $q$-Krawtchouk \\
$[6/8]$ & $(-\frac14,-\frac14,0,0;\frac34,\frac34;0)$ & $22as$ & Askey-Ismail & ? \\
\hline
$[2/10]$ & $(0,\frac15,\frac15,\frac15;\frac15,\frac15;0)$ & $13as$ & Continuous big $q$-Hermite & - \\
$[3/10]$ & $(-\frac1{10},-\frac1{10},\frac3{10},\frac3{10};\frac3{10},\frac3{10};\frac{1}{10})$ & $2020as$ & 
Al Salam Carlitz I & - \\
$[4/10]$ & $(-\frac15,0,0,\frac2{5};\frac25,\frac25;\frac{1}{5})$ & $1021as$ & Little $q$-Laguerre & - \\
$[5/10]'$ & $(-\frac1{10},-\frac1{10},-\frac1{10},\frac{3}{10};\frac12,\frac12;\frac{3}{10})$ & $1030as$ & NP & - \\   
$[5/10]$ & $(-\frac{3}{10},\frac1{10},\frac1{10},\frac1{10};\frac12,\frac12;\frac{3}{10})$ & $1030as$ & Alternative $q$-Charlier & - \\ 
$[6/10]$ & $(-\frac25,0,0,\frac1{5};\frac35,\frac35;\frac{1}{5})$ & $1021as$ & $q$-Laguerre/$q$-Charlier & - \\
$[7/10]$ & $(-\frac3{10},-\frac3{10},\frac1{10},\frac1{10};\frac7{10},\frac7{10};\frac{1}{10})$ & $2020as$ & 
Al Salam Carlitz II & - \\
$[8/10]$ & $(-\frac15,-\frac15,-\frac15,0;\frac45,\frac45;0)$ & $13as$ & ? & - \\
\hline
$[2/12]$ & $(\frac16,\frac16,\frac16,\frac16;\frac16,\frac16;0)$ & $04as$ & Continuous $q$-Hermite & - \\
$[5/12]$ & $(-\frac1{12},-\frac1{12},-\frac1{12},\frac5{12};\frac5{12},\frac5{12};\frac14)$ & $0031as$ & NP & - \\
$[7/12]$ & $(-\frac5{12},\frac1{12},\frac1{12},\frac1{12};\frac7{12},\frac7{12};\frac14)$ & $0031as$ & Stieltjes-Wiegert & - \\
$[10/12]$ & $(-\frac16,-\frac16,-\frac16,-\frac16;\frac56,\frac56;0)$ & $04as$ & ? & - \\
\end{tabular}
\end{center}

\section{Pastro Polynomials}
We would like to draw attention to the special points in the degeneration scheme for which the limits are families of biorthogonal polynomials (i.e. polynomials outside the $q$-Askey scheme where we have orthogonality). These polynomials are the Pastro polynomials \cite{Pastro}, and its degenerations. 

The top level of these polynomials is $1111pp$, associated to the vector $\alpha=(-\frac14,0,\frac14,\frac12;0,\frac12)$ (which equals it's own flip). For the first function 
$\tilde R_{n}(zp^{-\frac14};t_0 p^{-\frac14}, t_1,t_2p^{\frac14},t_3p^{\frac12};u_0,u_1p^{\frac12})$ the valuation turns out to be zero, and we get the explicit expression 
\begin{align}\label{eqdefPn}
P_n(z)& :=  \lim_{p\to 0} \tilde R_{n}(zp^{-\frac14};t_0 p^{-\frac14}, t_1,t_2p^{\frac14},t_3p^{\frac12};u_0,u_1p^{\frac12}) 
 \\ & = {}_3\phi_2\left( \begin{array}{c} q^{-n}, \frac{t_0}{z}, \frac{q}{u_0t_1} \\ 
 t_0t_2,0 \end{array} ;q,q\right)
=   
 \frac{(1/t_3u_1;q)_n}{(t_0t_2;q)_n} \left( \frac{q}{t_1u_0}\right)^n {}_2\phi_1 \left( \begin{array}{c} \frac{q}{t_1u_0}, q^{-n} \\ q^{1-n}t_3u_1\end{array} ;q, \frac{q}{t_2z} \right). \nonumber 
\end{align}
The ${}_3\phi_2$ expression is obtained by interchanging limit and sum in the definition \eqref{equnivbiortho} of the biorthogonal function $\tilde R_n$. The ${}_2\phi_1$ expression then follows using \cite[(III.7)]{GR}.

Notice that the parameters of the function only appear in certain combinations. In particular if we define
\[
p_{n}(w;A,B) = 
\frac{(B/q;q)_n}{(AB/q;q)_n} A^n {}_2\phi_1 \left( \begin{array}{c} A, q^{-n} \\ q^{2-n}/B 
\end{array} ;q, \frac{wq^{3/2}}{B} \right).
\]
we can write
\[
P_{n}(z;t_0:t_1,t_2,t_3;u_0,u_1) = p_n(\frac{q^{1/2}}{t_2t_3u_1z };\frac{q}{t_1u_0},\frac{q}{t_3u_1}).
\]
For the right hand family (the functions with $u_0$ and $u_1$ interchanged) we notice that by the symmetries from Lemma \ref{lemRtildeabeluniv}, and the permutation symmetry in the $t_r$'s we have 
\begin{align*}
\tilde R_n(zp^{-\frac14};&t_0 p^{-\frac14}, t_1,t_2p^{\frac14},t_3p^{\frac12};u_1p^{\frac12},u_0)
= \tilde R_n(zp^{\frac14};t_0 p^{\frac14}, t_1p^{\frac12},t_2p^{-\frac14},t_3;u_1,u_0p^{\frac12})
\\& = \tilde R_n(z^{-1}p^{-\frac14};t_0 p^{\frac14}, t_1p^{\frac12},t_2p^{-\frac14},t_3;u_1,u_0p^{\frac12})
\\ &= \frac{\theta(p^{-\frac14} t_2t_3, p^{\frac14} t_1t_2, 
p^{\frac54} \frac{qt_0}{u_1} , p^{-\frac14} \frac{1}{t_2u_0})_n}
{\theta(p^{\frac34} t_0t_1, p^{\frac14} t_0t_3, p^{-\frac34} \frac{1}{t_0u_0}, p^{\frac34} \frac{qt_2}{u_1})_n}
\tilde R_n(z^{-1}p^{-\frac14};t_2 p^{-\frac14}, t_3,t_0p^{\frac14},t_1p^{\frac12};u_1,u_0p^{\frac12}).
\end{align*}
As a corollary we see that the limit on the right hand side is up to a constant equal to the limit on the left hand side with different parameters. Indeed we have
\begin{align*}
Q_n &:= \lim_{p\to 0} \tilde R_n(zp^{-\frac14};t_0 p^{-\frac14}, t_1,t_2p^{\frac14},t_3p^{\frac12};u_1p^{\frac12},u_0)  
=  \left( \frac{1}{u_0t_0t_1t_2}\right)^{n}  P_n(\frac{1}{z};t_2,t_3,t_0,t_1;u_1,u_0).
\end{align*}
In particular, setting
\[
q_n(w;A,B) := p_n(\frac{1}{w};B,A),
\]
we have 
\[
Q_n(z;t_0:t_1,t_2,t_3;u_0,u_1) = 
\left( \frac{q}{t_3u_1}\right)^{-n}
q_n(\frac{q^{1/2}}{t_2t_3u_1z };\frac{q}{t_1u_0},\frac{q}{t_3u_1}).
\]
Note that this is the same parameter correspondence as between $P_{n}$ and $p_n$.
 Direct comparison now shows that the univariate functions $p_{n}$ and $q_{n}$ are equal to the polynomials \cite[(3.1)]{Pastro}, up to a constant and under the parameter correspondence $q^{\alpha}=A$, $q^{\beta} =B$ and $t=w$.

The limit of the biorthogonality relation gives us
\begin{align*}
\langle P_n(\cdot; t_r;u_r;q), Q_m(\cdot;t_r;u_r;q) \rangle = 
\delta_{n,m}  (t_1u_0)^{-n} 
\frac{(q;q)_n}{(t_0t_2;q)_n}.
\end{align*}
where 
\[
\langle f,g\rangle =  \frac{(q, t_0t_2;q)}{(q/t_1u_0,q/t_3u_1;q)} 
\int_{C} f(z)g(z) \frac{\theta(qz/t_0t_1u_0;q)}{(t_0/z,t_2z;q)} \frac{dz}{2\pi i z},
\]
or in terms of $p_n$ and $q_n$ this becomes
\[
\langle p_n(\cdot;A,B), q_m(\cdot;A,B) \rangle = \delta_{n,m} 
\left( \frac{AB}{q}\right)^{n} \frac{(q;q)_n}{(\frac{AB}{q};q)_n},
\]
where
\[
\langle f,g\rangle = \frac{(q, \frac{AB}{q};q)}{(A,B;q)} 
\int_{C} f(w)g(w) \frac{\theta(q^{1/2}w ;q)}{(Aw/q^{1/2} ,B/wq^{1/2};q)} \frac{dw}{2\pi i w}.
\]
. 

We would like to consider a special case. Indeed we want to specialize $t_3u_1\to 1$, or equivalently $B\to q$. The simplest expression to do this in is in the original ${}_3\phi_2$ of \eqref{eqdefPn}, which then becomes a ${}_2\phi_1$ which can be summed by the $q$-Vandermonde \cite[(II.6)]{GR}, to give
\[
p_n(w;A,q) = w^n A^n q^{-n/2},
\]
that is, the polynomials $p_n$ become just the monomials $w^n$. 
Observe that the specialization $A,B\to q$ turns the limiting measure into the trivial $\frac{dw}{2\pi i w}$, and indeed we know that the orthogonal functions with respect to this measure are $w^n$ and $w^{-n}$. The multivariate analogues of this special case of the Pastro polynomials are the Macdonald polynomials \cite{Macdonald} (indeed the univariate part of the measure associated to the orthogonality of the Macdonald polynomials is the trivial $dw/2\pi i w$).

\appendix

\section{Description of the limiting systems}\label{secsystemlimit}
In this section we write down explicitly all the limiting families of biorthogonal systems, and try and give a universal picture of the different cases. After the theory of Section \ref{seclimsys} this has reduced to a concrete combinatorial problem.

In Section \ref{seclimsys} we have seen that the limits of pairs of biorthogonal functions, together with measure, are classified by the different kinds of faces of the splitting of polytope $P^{(0)}$ in $P_I$, $P_{II}$ and $P_{III}$ (as in \cite{vdBR}) (with $\zeta$ given by its prescribed value), with the exception of the interior of $P_{II}$, modulo the symmetry action $S_4\times S_2$. Moreover we also still want to mod out by the shifting symmetries (while we are looking in a fundamental domain for this shift-action, boundary points can of course still be mapped to other boundary points). On the other hand we consider two solutions associated to each other by the flip as different (as the measures only make sense if we impose $|q|<1$, while the flip maps $q\to 1/q$, thus making it impossible for this condition to be satisfied for both cases).

The different kind of limits are named according to the relation of the $\alpha$ to $\zeta$. If $\zeta=0$ or $\zeta=1/2$ we have two integers in front, the first one counting how many $\alpha_r$ ($0\leq r\leq 3$) are equal to $\zeta$ (or some integer shift of $\zeta$), the second counting how many are unequal to $\zeta$ (and its integer shifts). If $\zeta\neq 0$ and $\zeta\neq 1/2$, we have four integers. The first two counting how many $\alpha_r$ are equal to $\zeta$ (and its integer shifts), respectively $-\zeta$ (and its integer shifts). The second two counting how many are in the interval $(-\zeta,\zeta)$ (and shifts), respectively $(\zeta,1-\zeta)$ (and shifts). These two pairs of integers are sorted large to small. The letters at the end describe the relation of $\gamma_0=\alpha_4$ and $\gamma_1=\alpha_5$ to $\zeta$. If (modulo 1) $\alpha_4=\alpha_5=\pm \zeta$ then we write $v2$, if 
$\alpha_4=-\alpha_5=\pm \zeta$ we write $vv$, if $\alpha_4=\pm \zeta$ and $\alpha_5 \neq \pm \zeta$ or vice versa, we write $vp$, if $\alpha_4, \alpha_5 \in (-\zeta,\zeta)$ or 
$\alpha_4,\alpha_5 \in (\zeta,1-\zeta)$ (and shifts), we write $as$, and if $\alpha_4\in (-\zeta,\zeta)$ and $\alpha_5 \in (\zeta,1-\zeta)$ or vice versa we write $pp$.

These names give a rough description of the associated biorthogonal functions. Indeed if $\alpha_r=\pm \zeta$ ($0\leq r\leq 3$) then we will be able to ``see'' this parameter in the limit, otherwise the limiting function will not depend on it. If two $\alpha_r$'s are equal the limit will moreover be symmetric under their interchange. If $\alpha_4=\pm \zeta$ or $\alpha_5=\pm \zeta$ then the associated biorthogonal functions will be rational functions, otherwise they will be polynomials. Again if they are equal or in the same interval, as in the cases $v2$ and $as$, the two families will be related by interchanging $u_0$ and $u_1$. The $pp$ limits are Pastro Polynomials and their limits, while the $as$ limits are limits in the Askey-Scheme of $q$-hypergeometric orthogonal polynomials. 

The simplest way to arrive at this classification is to start with the different faces mod $S_6$ of $P_I$, $P_{II}$ and $P_{III}$ from \cite{vdBR}, and tally their $S_4\times S_2$ orbits. Subsequently we identify those faces which differ by a shift. The flip symmetry is treated separately, as it involves inverting $q$ and thus alters the feel of the functions. All faces are simplicial, so we will just give the different vertices of the faces, together with the midpoints. Finally we briefly indicate the type of measure, whether it is a Nasrallah-Rahman type beta integral (NR) (Proposition \ref{propPI}), a symmetry broken integral (SB) (Proposition \ref{proplimip3}), a single series measure $\Sigma$ (Proposition \ref{propsumlim}), or a double series measure ($\Sigma^2$) (Proposition \ref{proplimip2}, these double series can also have an expression as a symmetry broken integral). 

We will use the following notations for the vertices: $d_j$ ($0\leq j\leq 3$) is the vertex with all 0's except $\alpha_j=1$ (so $|\zeta+\frac12|=\frac12$). $e_j$ ($j=0,1$) is the vertex with all 0's except $\gamma_j=1$ (with again $|\zeta+\frac12|=\frac12$). 
$f_{ij}$ is the vector with all $\frac12$'s, except $\alpha_i=\alpha_j=-\frac12$ (and $|\zeta+\frac12|=0$). $g_{ij}$ is the vector with all $\frac12$'s, except $\alpha_i=\gamma_j=-\frac12$. Finally $h_{01}=(\frac12,\frac12,\frac12,\frac12;-\frac12,-\frac12;0)$.

\subsection{Top level}
\subsubsection{$40v2$}
At the top level we consider vertices of $P_{I}$, $P_{II}$, and $P_{III}$ and get up to $S_4\times S_2$ symmetry the vectors $d_3$, $e_1$, $f_{01}$, $g_{00}$ and $h_{01}$.
These vectors are all mapped to each other by purely shifts, so in fact we only have one case. However the different vertices do correspond to different measures with respect to which the limiting rational functions are biorthogonal.

\subsection{Level 2}
Let us tabulate the different edges up to the $S_4\times S_2$ symmetry. 
Then we group those which are related to each other by shifts in the $W(E_6)$ lattice. The midpoints of the faces are given as the vector $(\alpha_0,\alpha_1,\alpha_2,\alpha_3;\gamma_0,\gamma_1;-\zeta)$. (We negated $\zeta$ as in the polytope $\zeta$ is always positive, but by the $z\to 1/z$ symmetry of the elliptic hypergeometric biorthogonal functions this negative value of $\zeta$ is just a consequence of some choices we made).
\begin{center}
\begin{tabular}{c||c|c|c}
Name & Vertices & Midpoint & $\mu$  \\
\hline
$31vp$ \T \B& $e_1, d_3$ & $(0,0,0,\frac12;0,\frac12;0)$ & NR\\
\B & $f_{01}, g_{00}$ &$(-\frac12,0,\frac12,\frac12;0,\frac12;\frac12)$ & $\Sigma$ \\
 \B& $g_{00}, h_{01}$ & $(0, \frac12,\frac12,\frac12;-\frac12,0;\frac12)$ & $\Sigma$ \\
\hline
$2200vp$ \T \B& $e_1, f_{01}$ & $(-\frac14,-\frac14,\frac14,\frac14;\frac14,\frac34;\frac14)$ & $\Sigma^2$ \\
 \B& $d_3, g_{00}$ & $(-\frac14,\frac14,\frac14,\frac34;-\frac14,\frac14;\frac14)$ & $\Sigma^2$ \\
\hline
$3100v2$ \T \B& $e_1, g_{00}$ & $(-\frac14,\frac14,\frac14,\frac14;-\frac14,\frac34;\frac14)$ & $\Sigma^2$\\
 \B& $d_3, f_{01}$ & $(-\frac14,-\frac14,\frac14,\frac34;\frac14,\frac14;\frac14)$& $\Sigma^2$ \\
 \B& $d_3, h_{01}$ & $(\frac14,\frac14,\frac14,\frac34;-\frac14,-\frac14;\frac14)$ & $\Sigma^2$\\
\hline
$22v2$ \T \B& $d_2, d_3$ & $(0,0,\frac12,\frac12;0,0;0)$ & NR\\
 \B &  $f_{01}, f_{02}$ & $(-\frac12,0,0,\frac12;\frac12,\frac12;\frac12)$ & $\Sigma$ \\
\hline
$40as$  \B& $e_0, e_1$ & $(0,0,0,0;\frac12,\frac12;0)$ & NR\\
 & $g_{00}, g_{01}$ &$(-\frac12,\frac12,\frac12,\frac12;0,0;\frac12)$ & $\Sigma$  
\end{tabular}
\end{center}
The case $40as$ is the top level case in the $q$-Askey-scheme and corresponds to Koornwinder polynomials, or univariately, Askey-Wilson polynomials. We obtain two different measures, the one associated to the edge between $e_0$ and $e_1$ is the usual Askey-Wilson measure.

\subsection{Level 3}
Now we consider 2d faces, that is, triangles. In this case we have some faces which map to different faces if we perform the 
$q\to 1/q$ flip. Considering they are essentially the same functions, but also realizing that we consider them on different domains
(either $|q|<1$ or $|q|>1$) we group them together but are aware of the differences.
\begin{center}
\begin{tabular}{c||c|c|c||c|c|c}
\multirow{2}{*}{Name} & \multicolumn{3}{c||}{Ordinary}  & \multicolumn{3}{c}{Flipped}  \\
& vertices & midpoint & $\mu$& vertices & midpoint & $\mu$ \\
\hline
$22vp$ \T \B  & $d_2, d_3, e_1$ & $(0,0,\frac13,\frac13;0,\frac13;0)$ & NR
 & $f_{01}, f_{02}, g_{00}$ & $(-\frac12,\frac16,\frac16,\frac12;\frac16,\frac12;\frac12)$ & $\Sigma$ \\
\B& $f_{01}, g_{00}, g_{10}$ & $(-\frac16,-\frac16,\frac12,\frac12;-\frac16,\frac12;\frac12)$ & SB
  & $g_{00}, g_{10}, h_{01}$ & $(\frac16,\frac16,\frac12,\frac12;-\frac12,\frac16;\frac12)$ & $\Sigma$ \\
  \hline
$2110vp$ \T \B& $d_3, e_1, f_{01}$ & $(-\frac16,-\frac16,\frac16,\frac12;\frac16,\frac12;\frac16)$ & $\Sigma^2$ 
 & $d_3, e_1, g_{00}$ & $(-\frac16,\frac16,\frac16,\frac12;-\frac16,\frac12;\frac16)$ & $\Sigma^2$ \\
 \B& $d_3, g_{00}, h_{01}$ & $(0,\frac13,\frac13,\frac23;-\frac13,0;\frac13)$ & $\Sigma$ 
  & $d_3, f_{01}, g_{00}$ & $(-\frac13,0,\frac13,\frac23;0,\frac13;\frac13)$ & $\Sigma$ \\
 \B& $e_1, g_{00}, f_{01}$ & $(-\frac13,0,\frac13,\frac13;0,\frac23;\frac13)$ & $\Sigma$ & & &\\
\hline  
$1120vv$ \T \B& $d_2, d_3, g_{00}$ & $(-\frac16,\frac16,\frac12,\frac12;-\frac16,\frac16;\frac16)$ & $\Sigma^2$ &&& \\
  & $d_3, g_{00}, g_{10}$ & $(0,0,\frac13,\frac23;-\frac13,\frac13;\frac13)$ & $\Sigma$ && & \\
  \B& $e_1, f_{01}, f_{02}$ & $(-\frac13,0,0,\frac13;\frac13,\frac23;\frac13)$ & $\Sigma$  &&& \\
  \hline
 $2020v2$\T \B & $d_2, d_3, f_{01}$ & $(-\frac16,-\frac16,\frac12,\frac12;\frac16,\frac16;\frac16)$ &  $\Sigma^2$
 &$d_2, d_3, h_{01}$ & $(\frac16,\frac16,\frac12,\frac12;-\frac16,-\frac16;\frac16)$ & $\Sigma^2$\\
 \B& $e_1, g_{00}, g_{10}$ & $(0,0,\frac13,\frac13;-\frac13,\frac23;\frac13)$ & $\Sigma$
  & $d_3, f_{01}, f_{02}$ & $(-\frac13,0,0,\frac23;\frac13,\frac13;\frac13)$ & $\Sigma$ \\
  \hline
$13v2$\T \B & $d_1, d_2, d_3$ & $(0,\frac13,\frac13,\frac13;0,0;0)$ & NR
  & $f_{01}, f_{02}, f_{03}$ & $(-\frac12,\frac16,\frac16,\frac16;\frac12,\frac12;\frac12)$ & $\Sigma$ \\
  \B& $f_{01}, f_{02}, f_{12}$ & $(-\frac16,-\frac16,-\frac16,\frac12;\frac12,\frac12;\frac12)$ & SB 
  & $g_{00}, g_{10}, g_{20}$ & $(\frac16,\frac16,\frac16,\frac12;-\frac12,\frac12;\frac12)$ & $\Sigma$ \\
  \hline
 $31as$ \T\B & $d_3, e_1, e_0$ & $(0,0,0,\frac13;\frac13,\frac13;0)$ & NR
  & $f_{01}, g_{00}, g_{01}$ & $(-\frac12,\frac16,\frac12,\frac12;\frac16,\frac16;\frac12)$ & $\Sigma$ \\
  \B& $g_{00}, g_{01}, h_{01}$ & $(-\frac16,\frac12,\frac12,\frac12;-\frac16,-\frac16;\frac12)$ & SB &&&\\
 \hline
 $2200as$\T \B & $d_3, g_{00}, g_{01}$  &  $(-\frac13,\frac13,\frac13,\frac23;0,0;\frac13)$ & $\Sigma$   &&&\\
  & $e_0, e_1, f_{01}$ & $(-\frac16,-\frac16,\frac16,\frac16;\frac12,\frac12;\frac16)$ & $\Sigma^2$ && &
\end{tabular}
\end{center}


The points where we do not include flipped versions, in this table or the coming ones, are all their own flips. In particular for those points the same pair of families of functions is biorthogonal with respect to explicit measures both when $|q|>1$ and when $|q|<1$.

\subsection{Level 4} The 3-d faces are all tetrahedra.

\begin{center}
\begin{tabular}{c||c|c|c||c|c|c}
\multirow{2}{*}{Name} & \multicolumn{3}{c||}{Ordinary}  & \multicolumn{3}{c}{Flipped}  \\
& vertices & midpoint & $\mu$ & vertices & midpoint & $\mu$ \\
\hline
$13vp$ \T \B& $d_1, d_2, d_3,  e_1$ &  $(0,\frac14,\frac14,\frac14;0,\frac14;0)$ & NR &
 $f_{01}$, $f_{02}$, $f_{03}$,  $g_{00}$ & $(-\frac12,\frac14,\frac14,\frac14;\frac14,\frac12;\frac12)$ & $\Sigma$ \\
 \B& & & & $g_{00}, g_{10}, g_{20}, h_{01}$ & $(\frac14,\frac14,\frac14,\frac12;-\frac12,\frac14;\frac12)$ &  $\Sigma$ \\
\hline
$2020vp$ \T\B& $d_2, d_3, e_1, f_{01}$ & $(-\frac18,-\frac18,\frac38,\frac38;\frac18,\frac38;\frac18) $ & $\Sigma^2$ &
$d_3, f_{01}, f_{02},  g_{00}$ & $(-\frac38, \frac18,\frac18,\frac58;\frac18,\frac38;\frac38)$ & $\Sigma$ \\
 \B& $e_1, f_{01}, g_{00}, g_{10}$ & $(-\frac18,-\frac18,\frac38,\frac38;-\frac18,\frac58;\frac38)$ & SB & & &\\
\hline
$1021vp$\T\B & $d_2, d_3, f_{01}, g_{00}$ & $(-\frac14,0,\frac12,\frac12;0,\frac14;\frac14)$ & $\Sigma$ &
 $d_2, d_3, g_{00},  h_{01}$ & $(0,\frac14,\frac12,\frac12;-\frac14,0;\frac14)$ & $\Sigma$ \\
 \B& $d_3, e_1, g_{00},  g_{10}$ & $(0,0,\frac14,\frac12;-\frac14,\frac12;\frac14) $ & $\Sigma$ &
  $d_3, e_1, f_{01},  f_{02}$ & $(-\frac14,0,0,\frac12;\frac14,\frac12;\frac14)$ & $\Sigma$ \\
\hline
$1030vv$\T \B& $d_1, d_2, d_3, g_{00}$ & $(-\frac18,\frac38,\frac38,\frac38;-\frac18,\frac18;\frac18)$ & $\Sigma^2$
 & $d_3, g_{00}, g_{10}, g_{20}$ & $(\frac18,\frac18,\frac18,\frac58;-\frac38,\frac38;\frac38)$ &  $\Sigma$  \\
  \B& $e_1, f_{01}, f_{02}, f_{12}$ & $(-\frac18,-\frac18,-\frac18,\frac38;\frac38,\frac58;\frac38)$ & SB
 & $e_1, f_{01}, f_{02}, f_{03}$ & $(-\frac38,\frac18,\frac18,\frac18;\frac38,\frac58;\frac38)$ & $\Sigma$ \\
\hline
$1111pp$ \T\B& $d_3, e_1, f_{01}, g_{00}$ & $(-\frac14,0,\frac14,\frac12;0,\frac12;\frac14)$ & $\Sigma$ & &&\\
\hline
$1120vp$ \T\B& $d_2, d_3, e_1, g_{00}$ & $(-\frac18,\frac18,\frac38,\frac38;-\frac18,\frac38;\frac18)$ & $\Sigma^2$ 
  & $d_3, g_{00}, g_{10}, h_{01}$ & $(\frac18,\frac18, \frac38,\frac58;-\frac38,\frac18;\frac38)$ &  $\Sigma$  \\
 \B& $d_3, f_{01}, g_{00},  g_{10}$ & $(-\frac18,-\frac18, \frac38,\frac58;-\frac18,\frac38;\frac38)$ & SB
  & $e_1, f_{01}, f_{02}, g_{00}$ & $(-\frac38,\frac18,\frac18,\frac38;\frac18,\frac58;\frac38)$ &  $\Sigma$ \\
\hline
$0022vv$ \T\B& $d_2, d_3, g_{00}, g_{10}$ & $(0,0,\frac12,\frac12;-\frac14,\frac14;\frac14)$ & $\Sigma$  & &&\\
\hline
$1030v2$ \T\B& $d_1, d_2, d_3, h_{01}$ & $(\frac18,\frac38,\frac38,\frac38;-\frac18,-\frac18;\frac18)$ & $\Sigma^2$
  & $e_1, g_{00}, g_{10}, g_{20}$ & $(\frac18,\frac18,\frac18,\frac38;-\frac38,\frac58;\frac38)$ &  $\Sigma$ \\
 \B& $d_3, f_{01}, f_{02}, f_{12}$ & $(-\frac18,-\frac18,-\frac18,\frac58;\frac38,\frac38;\frac38)$ & SB & &&\\
\hline
$04v2$ \T\B& $d_0, d_1, d_2,d_3$ & $(\frac14,\frac14,\frac14,\frac14;0,0;0)$ & NR
 & $g_{00}, g_{10}, g_{20}, g_{30}$ & $(\frac14,\frac14,\frac14,\frac14;-\frac12,\frac12;\frac12)$ &  $\Sigma$ \\
\hline
$22as$ \T\B& $d_2, d_3, e_0, e_1$ & $(0,0,\frac14,\frac14;\frac14,\frac14;0)$ & NR
 & $f_{01}, f_{02}, g_{00}, g_{01}$ & $(-\frac12,\frac14,\frac14,\frac12;\frac14,\frac14;\frac12)$ &  $\Sigma$ \\
\hline
$2110as$\T \B& $d_3, e_0, e_1, f_{01}$ & $(-\frac18,-\frac18,\frac18,\frac38;\frac38,\frac38;\frac18)$ & $\Sigma^2$
 & $d_3, f_{01}, g_{00},  g_{01}$ & $(-\frac38,\frac18,\frac38,\frac58;\frac18,\frac18;\frac38)$ &  $\Sigma$  \\
 \B& $d_3, g_{00}, g_{01},  h_{01}$ & $(-\frac18,\frac38,\frac38, \frac58;-\frac18,-\frac18;\frac38)$ & SB & &&\\
\hline
$1120as$ \T\B& $d_2, d_3, g_{00}, g_{01}$ & $(-\frac14,\frac14,\frac12,\frac12;0,0;\frac14)$ & $\Sigma$ & &&\\ 
 & $e_0, e_1, f_{01}, f_{02}$  & $(-\frac14,0,0,\frac14;\frac12,\frac12;\frac14)$ & $\Sigma$ & &&
\end{tabular}

\end{center}

\subsection{Level 5} The 4-d faces are all simplices as well. To save some horizontal space we use an abbreviated notation for the midpoints. For example $(\frac15^4;0,\frac15;0)$ denotes the point $(\frac15,\frac15,\frac15,\frac15;0,\frac15;0)$. 
\begin{center}
\begin{tabular}{c@{\ }||@{\ }c@{\,}|@{\,}c@{\,}|@{\,}c@{\,}||@{\ }c@{\,}|@{\,}c@{\,}|@{\,}c}
\multirow{2}{*}{Name} & \multicolumn{3}{c@{\,}||@{\ }}{Ordinary}  & \multicolumn{3}{c}{Flipped}  \\
& vertices & midpoint &$\mu$& vertices & midpoint & $\mu$ \\
\hline
$04vp$ \Ttwo \B& $d_0, d_1, d_2, d_3,  e_1$ & $(\frac15^4;0,\frac15;0)$ & NR &
 $g_{00}, g_{10}, g_{20}, g_{30}, h_{01}$ & $(\frac3{10}^4;-\frac12,\frac3{10};\frac12)$ & $\Sigma$\\
\hline
$0031vp$ \Ttwo& $d_1, d_2, d_3, g_{00},  h_{01}$ & $(0,\frac25^3;-\frac15,0;\frac15)$ & SB
 & $d_3, e_1, g_{00}, g_{10},  g_{20}$ & $(\frac1{10}^3,\frac12;-\frac3{10},\frac12;\frac3{10})$& $\Sigma$ \\
  \T \B & $d_3, e_1, f_{01}, f_{02}, f_{12}$ & $(-\frac1{10}^3,\frac12;\frac3{10},\frac12;\frac3{10})$& SB & & &\\
\hline
$1021pp$ \Ttwo& $d_2, d_3, e_1, f_{01},  g_{00}$ & $(-\frac15,0,\frac25^2;0,\frac25;\frac15)$ &  SB
  & $e_1, d_3, f_{01}, f_{02},  g_{00}$ & $(-\frac3{10},\frac1{10}^2,\frac12;\frac1{10},\frac12;\frac3{10})$& $\Sigma$ \\
  \B & $d_3, e_1, f_{01}, g_{00},  g_{10}$ & $(-\frac1{10}^2,\frac3{10},\frac12;-\frac1{10},\frac12;\frac3{10})$ & SB & & &\\
\hline
$0022vp$\Ttwo & $d_2, d_3, e_1, g_{00}, g_{10}$ & $(0^2,\frac25^2;-\frac15,\frac25;\frac15)$ & SB
 & $d_2, d_3, g_{00}, g_{10},  h_{01}$ & $(\frac1{10}^2,\frac12^2;-\frac3{10},\frac1{10};\frac3{10})$ & $\Sigma$ \\
 \B & $d_2, d_3, f_{01}, g_{00}, g_{10}$ & $(-\frac1{10}^2,\frac12^2;-\frac1{10}, \frac3{10};\frac3{10})$ & SB  && \\
\hline 
$0040v2$\Ttwo \B& $d_0, d_1, d_2, d_3,  h_{01}$  & $(\frac3{10}^4;-\frac1{10}^2;\frac1{10})$ & $\Sigma^2$
 & $e_1, g_{00}, g_{10}, g_{20},  g_{30}$ &  $(\frac15^4;-\frac25,\frac35;\frac25)$& $\Sigma$  \\
\hline
$1030vp$ \Ttwo& $d_1, d_2, d_3, e_1, g_{00}$ & $(-\frac1{10},\frac3{10}^3;-\frac1{10},\frac3{10};\frac1{10})$ & $\Sigma^2$
 & $d_3, g_{00}, g_{10}, g_{20}, h_{01}$ & $(\frac15^3,\frac35;-\frac25,\frac15;\frac25)$ & $\Sigma$  \\
 \B & & & &$e_1, f_{01}, f_{02}, f_{03},  g_{00}$ & $(-\frac25,\frac15^3;\frac15,\frac35;\frac25)$ & $\Sigma$ \\
 \hline
$13as$ \Ttwo \B& $d_1, d_2, d_3, e_0, e_1$ & $(0,\frac15^3;\frac15^2;0)$ & NR
 & $f_{01}, f_{02}, f_{03}, g_{00},  g_{01}$ & $(-\frac12,\frac3{10}^3;\frac3{10}^2;\frac12)$ &  $\Sigma$ \\
\hline
$2020as$ \Ttwo \B & $d_2, d_3, e_0, e_1,  f_{01}$ & $(-\frac1{10}^2,\frac3{10}^2;\frac3{10}^2;\frac1{10}) $ & $\Sigma^2$
 & $d_3, f_{01}, f_{02}, g_{00}, g_{01}$ & $(-\frac25,\frac15^2,\frac35;\frac15^2;\frac25)$& $\Sigma$ \\
\hline
$1021as$\Ttwo \B & $d_3, e_0, e_1, f_{01},  f_{02}$ & $(-\frac15,0^2,\frac25;\frac25^2;\frac15)$ & SB
 & $d_2, d_3, f_{01}, g_{00},  g_{01}$ & $(-\frac3{10},\frac1{10},\frac12^2;\frac1{10}^2;\frac3{10})$& $\Sigma$  \\
 \B & $d_2, d_3, g_{00}, g_{01},  h_{01}$ & $(-\frac1{10},\frac3{10}, \frac12^2;-\frac1{10}^2;\frac3{10})$& SB  & && \\
\hline
$1030as$\Ttwo \B& $d_1, d_2, d_3, g_{00},  g_{01}$ &  $(-\frac15,\frac25^3;0^2;\frac15)$ & SB
   & $e_0, e_1, f_{01}, f_{02},  f_{03}$ & $(-\frac3{10},\frac1{10}^3;\frac12^2;\frac3{10})$& $\Sigma$ \\
 & $e_0, e_1, f_{01}, f_{02},  f_{12}$ & $(-\frac1{10}^3,\frac3{10};\frac12^2;\frac3{10})$ & SB  && & \\
 \end{tabular}

\end{center}

\subsection{Level 6} The 5-d faces are not all simplices anymore, but the only non-simplices are the interiors of $P_{II,t}$, which are excluded anyway. 
\begin{center}
\begin{tabular}{c||c|c|c}
Name & vertices & midpoint & $\mu$ \\
\hline
$0022pp$ \T \B& $d_2, d_3, e_1, f_{01}, g_{00}, g_{10}$&  $(-\frac1{12},-\frac1{12},\frac5{12},\frac5{12};-\frac1{12},\frac5{12};\frac14)$ & SB \\
\hline
$04as$ \T \B & $d_0, d_1, d_2, d_3, e_0, e_1$ & $(\frac16,\frac16,\frac16,\frac16;\frac16,\frac16;0)$ & NR\\
\hline
$0031as$ \T \B& $d_1, d_2, d_3, g_{00}, g_{01}, h_{01}$  & $(-\frac1{12},\frac5{12},\frac5{12},\frac5{12};-\frac1{12},-\frac1{12};\frac14)$  & SB\\
  & $d_3, e_0, e_1, f_{01}, f_{02}, f_{12}$ & $(-\frac1{12},-\frac1{12},-\frac1{12},\frac5{12};\frac5{12},\frac5{12};\frac14)$ & SB
\end{tabular}
\end{center}

In these cases the flips of the points are not in the polytope $P^{(0)}$ (thus in particular these points are not invariant under the flip). This implies that for the pairs of families of polynomials under consideration the measures 
from \cite{vdBR} only work when $|q|<1$, not when $|q|>1$. In \cite{vdBRmeas} we show that we can give measures in these cases also for $|q|>1$, the measures then being related to bilateral series.

\subsection{Degeneration scheme}
Having tabulated all these different families of biorthogonal functions it becomes possible to draw them all in a single graph, see Figure \ref{figdegscheme}. 

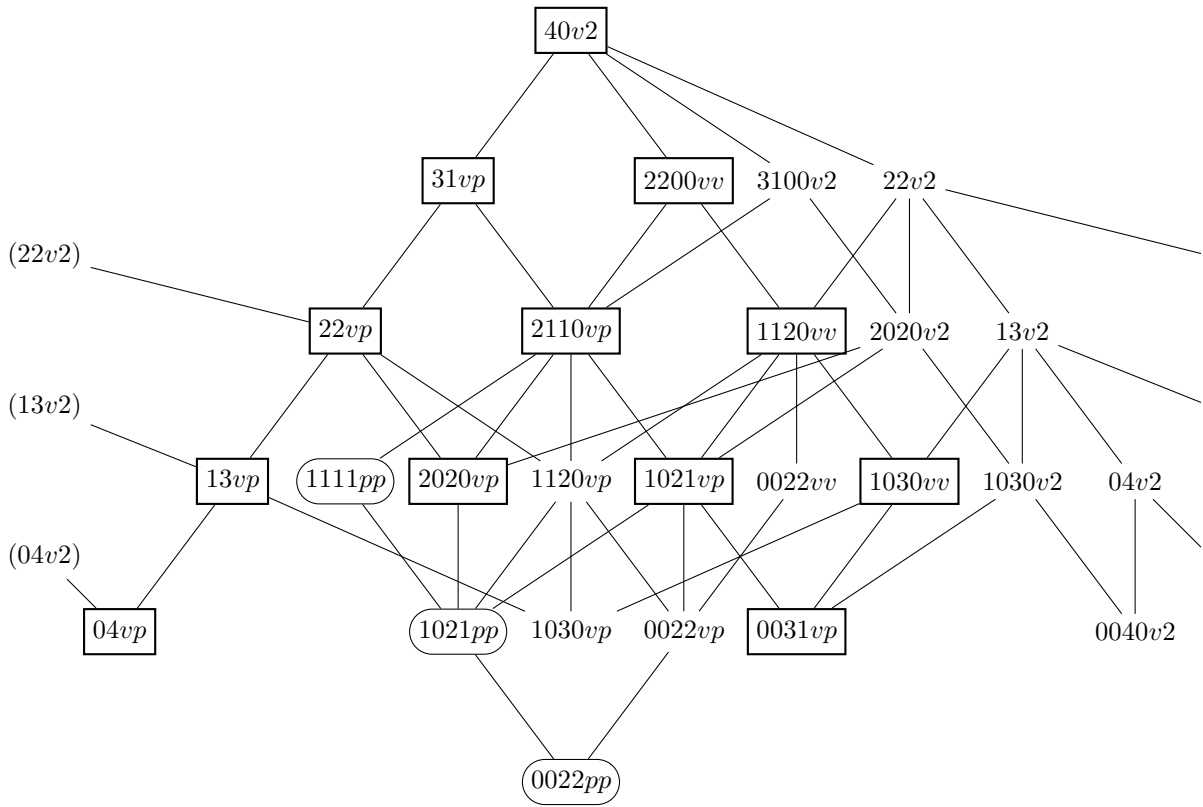
\begin{figure}
\begin{tikzpicture}[pastro/.style={rectangle, minimum size=6mm, rounded corners=3mm, draw=black}, aw/.style={rectangle, minimum size=6mm, thick,  draw=black}]

\node  (0022pp) at (7,0) [pastro] {$0022pp$}  ;

\node  (04vp) at (1,2) [aw]  {$04vp$}  ;
\node (1021pp) at (5.5,2) [pastro]  {$1021pp$};
\node (0022vp) at (8.5,2)  {$0022vp$};
\node (0031vp) at (10,2) [aw]  {$0031vp$};
\node (0040v2)  at (14.5,2) {$0040v2$};
\node (1030vp) at (7,2)  {$1030vp$};

\node (13vp) at (2.5,4) [aw] {$13vp$};
\node (1111pp) at (4,4) [pastro] {$1111pp$}; 
\node (2020vp) at (5.5,4) [aw] {$2020vp$};
\node (1120vp) at (7,4) {$1120vp$};
\node (1021vp) at (8.5,4) [aw] {$1021vp$};
\node (0022vv) at (10,4) {$0022vv$};
\node (1030vv) at (11.5,4) [aw] {$1030vv$};
\node (1030v2) at (13,4) {$1030v2$};
\node (04v2) at (14.5,4) {$04v2$};
\node (d04v2) at (0,3) {$(04v2)$};
\node (t04v2) at (15.5,3) {} ;

\node (22vp) at (4,6) [aw] {$22vp$};
\node (2110vp) at (7,6) [aw] {$2110vp$};
\node (1120vv) at (10,6) [aw] {$1120vv$};
\node (2020v2) at (11.5,6) {$2020v2$};
\node (13v2) at (13,6) {$13v2$};
\node (d13v2) at (0,5) {$(13v2)$};
\node (t13v2) at (15.5,5) {} ;

\node (31vp) at (5.5,8) [aw] {$31vp$};
\node (2200vv) at (8.5,8) [aw] {$2200vv$};
\node (3100v2) at (10,8) {$3100v2$};
\node (22v2) at (11.5,8) {$22v2$} ;
\node (d22v2) at (0,7) {$(22v2)$};
\node (t22v2) at (15.5,7) {} ;

\node (40v2) at (7,10) [aw] {$40v2$} ;

\draw (40v2) -- (31vp) ;
\draw (40v2) -- (2200vv) ;
\draw (40v2) -- (3100v2) ;
\draw (40v2) -- (22v2) ;

\draw (31vp) -- (22vp) ;
\draw (31vp) --(2110vp) ;
\draw (2200vv) --(2110vp) ;
\draw (2200vv) --(1120vv) ;
\draw (3100v2) -- (2110vp) ;
\draw (3100v2) -- (2020v2) ;
\draw (22v2) -- (1120vv) ;
\draw (22v2) -- (2020v2) ;
\draw (22v2) -- (13v2) ;
\draw (22v2) -- (t22v2) ;
\draw (d22v2) -- (22vp) ;

\draw (22vp) -- (13vp) ;
\draw (22vp) -- (2020vp) ;
\draw (22vp) -- (1120vp) ;
\draw (2110vp) -- (1111pp) ;
\draw (2110vp) -- (2020vp) ;
\draw (2110vp) -- (1120vp) ;
\draw (2110vp) -- (1021vp) ;
\draw (1120vv) -- (1120vp) ;
\draw (1120vv) -- (1021vp) ;
\draw (1120vv) -- (0022vv) ;
\draw (1120vv) -- (1030vv) ;
\draw (2020v2) -- (2020vp) ;
\draw (2020v2) -- (1021vp) ;
\draw (2020v2) -- (1030v2) ;
\draw (13v2) -- (1030vv) ;
\draw (13v2) -- (1030v2) ;
\draw (13v2) -- (04v2) ;
\draw (13v2) -- (t13v2) ;
\draw (d13v2) -- (13vp) ;

\draw (13vp) -- (04vp) ;
\draw (13vp) -- (1030vp) ;
\draw (1111pp) --(1021pp) ;
\draw (2020vp) --(1021pp) ;
\draw (1120vp) --(1021pp) ;
\draw (1120vp) -- (1030vp) ;
\draw (1120vp) -- (0022vp) ;
\draw (1021vp) -- (1021pp) ;
\draw (1021vp) -- (0022vp) ;
\draw (1021vp) -- (0031vp) ;
\draw (0022vv) -- (0022vp) ;
\draw (1030vv) -- (1030vp) ;
\draw (1030vv) -- (0031vp) ;
\draw (1030v2) -- (0031vp) ;
\draw (1030v2) -- (0040v2) ;
\draw (04v2) -- (0040v2) ;
\draw (04v2) -- (t04v2) ;
\draw (d04v2) -- (04vp) ;

\draw (1021pp) -- (0022pp) ;
\draw (0022vp) -- (0022pp) ;

\end{tikzpicture}

\caption{The degeneration scheme of biorthogonal systems}\label{figfulldeg}
 
\label{figdegscheme}
 
\end{figure} 
The graph should be read as follows. In the graph we placed all families of biorthogonal functions except for the $as$ ones (for those: see below). The height of a family denotes the dimension of the face which the family corresponds to, the top level ($40v2$) corresponds to a vertex, so dimension 0, while the bottom level ($0022pp$) corresponds to the interior of a $P_{III}$, which has dimension 5. This dimension is inversely proportional to the number of free parameters the limiting family still has. Indeed the top degeneration depends on $5-0=5$ free parameters (remember we have a balancing condition, which defines the sixth parameters in terms of the other five), while the bottom degeneration has no ($5-5=0$) parameters.

The edges between different families denote limit transitions. By the iterated limit property we know that we can take further limits of our biorthogonal families. One family is a limit of another one if one of the associated faces of the first family contains a face associated to the second family. This allows us to easily obtain the possible limit transitions.

We indicated the Pastro-polynomial type limits (i.e. the ones of form $pp$) with an oval. The rectangles actually contain much more subtle information. Underneath (i.e. one level lower) each of the limits denoted with a rectangle there exists a family with the same node, except that the $u$ parameters are of the $as$ type. Moreover there exists a limit from the family in the box to the corresponding $as$-type limit, and this is the only limit from a non-$as$ type family. The limits between different $as$-type families are given by the edges between the corresponding families with a rectangle around them. \footnote{This somewhat unconventional way of representing things was performed because the picture would otherwise become much too cluttered.}

\end{document}